\documentclass[11pt,oneside]{amsart}
\usepackage[utf8]{inputenc}
\usepackage{amsthm}
\usepackage{amssymb}
\usepackage{float}
\usepackage{geometry}
\usepackage{tikz-cd}
\usepackage[hidelinks]{hyperref}
\usepackage{todonotes}

\subjclass[2020]{14E30, 14Q15}

\numberwithin{equation}{section}

\floatstyle{ruled}
\newfloat{algorithm}{tbp}{loa}
\floatname{algorithm}{Algorithm}

\theoremstyle{plain}
\newtheorem{thm}{Theorem}[section]
\newtheorem{lem}[thm]{Lemma}
\newtheorem{prop}[thm]{Proposition}
\newtheorem{cor}[thm]{Corollary}

\theoremstyle{definition}
\newtheorem{defn}[thm]{Definition}

\theoremstyle{remark}
\newtheorem{rem}[thm]{Remark}

\newcommand{\bigmid}{\mathrel{}\middle|\mathrel{}}

\newcommand{\CC}{\mathbb{C}}

\newcommand{\GG}{\mathbb{G}}

\newcommand{\NN}{\mathbb{N}}
\newcommand{\PP}{\mathbb{P}}
\newcommand{\QQ}{\mathbb{Q}}
\newcommand{\RR}{\mathbb{R}}
\newcommand{\ZZ}{\mathbb{Z}}

\newcommand{\ba}{\mathbf{a}}

\newcommand{\bc}{\mathbf{c}}
\newcommand{\bd}{\mathbf{d}}
\newcommand{\be}{\mathbf{e}}
\renewcommand{\bf}{\mathbf{f}}
\newcommand{\bg}{\mathbf{g}}
\newcommand{\bh}{\mathbf{h}}
\newcommand{\bi}{\mathbf{i}}
\newcommand{\bj}{\mathbf{j}}

\newcommand{\br}{\mathbf{r}}
\newcommand{\bs}{\mathbf{s}}

\newcommand{\bu}{\mathbf{u}}
\newcommand{\bv}{\mathbf{v}}
\newcommand{\bw}{\mathbf{w}}
\newcommand{\bx}{\boldsymbol{x}}
\newcommand{\by}{\boldsymbol{y}}

\newcommand{\cA}{\mathcal{A}}

\newcommand{\cF}{\mathcal{F}}

\newcommand{\cJ}{\mathcal{J}}

\newcommand{\cL}{\mathcal{L}}
\newcommand{\cM}{\mathcal{M}}
\newcommand{\cN}{\mathcal{N}}
\newcommand{\cO}{\mathcal{O}}

\newcommand{\cQ}{\mathcal{Q}}

\newcommand{\cS}{\mathcal{S}}
\newcommand{\cT}{\mathcal{T}}

\newcommand{\fp}{\mathfrak{p}}
\newcommand{\fq}{\mathfrak{q}}

\newcommand{\ft}{\mathfrak{t}}

\newcommand{\fM}{\mathfrak{M}}

\renewcommand{\rm}{\mathrm{m}}

\newcommand{\balpha}{\boldsymbol{\alpha}}
\newcommand{\bbeta}{\boldsymbol{\beta}}

\newcommand{\biProj}{\operatorname{\mathbf{Proj}}}

\newcommand{\bmax}{\mathbf{max}}
\newcommand{\bmin}{\mathbf{min}}
\newcommand{\bone}{\mathbf{1}}

\newcommand{\bzero}{\mathbf{0}}

\newcommand{\Cl}{\operatorname{Cl}}

\newcommand{\Cone}{\operatorname{Cone}}

\newcommand{\cProj}{\mathcal{P}roj}
\newcommand{\cSpec}{\mathcal{S}pec}

\newcommand{\Exc}{\mathrm{Exc}}
\newcommand{\Ext}{\operatorname{Ext}}

\newcommand{\Hom}{\operatorname{Hom}}

\newcommand{\lcm}{\operatorname{\mathrm{lcm}}}

\newcommand{\NE}{\overline{\mathrm{NE}}}

\newcommand{\pr}{\operatorname{pr}}
\newcommand{\Proj}{\operatorname{Proj}}
\newcommand{\Qbar}{\overline{\QQ}}

\newcommand{\Spec}{\operatorname{Spec}}

\begin{document}
\title{An algorithm for the minimal model program in dimension three}
\author{Takehiko Yasuda}
\address{Department of Mathematics, Graduate School of Science, the University
of Osaka, Toyonaka, Osaka 560-0043, JAPAN}
\address{Kavli Institute for the Physics and Mathematics of the Universe, The
University of Tokyo, 5-1-5 Kashiwanoha, Kashiwa, Chiba, 277-8583,
Japan}
\email{yasuda.takehiko.sci@osaka-u.ac.jp}
\begin{abstract}
We construct an algorithm for the minimal model program in dimension
three over a computable field of characteristic zero admitting a
splitting algorithm, such as number fields and the field of algebraic numbers. As
auxiliary results, we also construct algorithms for computing bigraded
global Hom modules and for computing Stein factorization.
\end{abstract}

\maketitle

\tableofcontents

\section{Introduction}

The minimal model program occupies a central position in birational
geometry. Starting from a projective variety with mild singularities,
it performs a sequence of birational transforms, and if it terminates,
produces either a minimal model or a Mori fiber space. The program
is well established for threefolds in characteristic zero. For more
details on the program, we refer the reader to the books \cite{kollar1998birational,matsuki2002introduction,hacon2010classification,fujino2017foundations,kawakita2023complex,kawamata2024algebraic}.
As is described with flowcharts (see \cite[Figure 2]{kawamata1987introduction},
\cite[Flowchart 3-1-15]{matsuki2002introduction}), the minimal model
program is sometimes regarded as being algorithmic. However, some
steps of the program rely on non-constructive existence theorems and
there has been no genuine algorithm for the program in the literature,
to the best of the author's knowledge. In special cases, Lazić and
Schreyer carried out computations closely related to the minimal model
program using a computer algebra system \cite{lazic2022birational}.
Strategies towards an algorithmic approach were proposed by the author
\cite[Section 10.4]{yasuda2023theisomorphism} and by Lazić \cite{lazic2024programming}.

The purpose of this paper is to present an algorithm for a version of the minimal
model program in dimension three. Our main result is stated in more
precise terms as follows:
\begin{thm}
\label{thm:main}
There is an algorithm, Algorithm \ref{algo:MMP}, to run a version of the minimal
model program in dimension three over a field \(k\) of characteristic
zero that is computable and admits a splitting algorithm; when
a normal projective variety \(X\) of dimension 3 with only log terminal
singularities is given as an input, the algorithm returns, as an
output, a finite sequence of rational maps
\[
X=X_{0}\overset{f_{1}}{\dasharrow}X_{1}\overset{f_{2}}{\dasharrow}\cdots\overset{f_{n}}{\dasharrow}X_{n},
\]
with the following
properties:
If \(X_{i-1}\) is \(\QQ\)-Gorenstein, then \(f_i\) is the contraction of a \(K\)-negative extremal face.
If \(X_{i-1}\) is not \(\QQ\)-Gorenstein, then \(X_i\) is the relative
canonical model of \(X_{i-1}\), with
\(f_i^{-1}\colon X_i\to X_{i-1}\) the natural morphism.
The sequence ends in one of two
ways: either \(f_{n}\) is birational as well and \(K_{X_{n}}\) is nef, or \(f_{n}\) is a
\(K\)-negative fibration with \(\dim X_n <3\).
\end{thm}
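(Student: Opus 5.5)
The plan is to make each step of the classical threefold MMP effective. The non-constructive existence statements (cone theorem, contraction theorem, existence of flips, termination) are replaced by searches. Each search is guaranteed to terminate \emph{because} of the theorems, and each candidate it produces can be verified algorithmically. The basic data structure is a normal projective variety $X$ given by a homogeneous coordinate ring over $k$, together with Weil divisor classes represented by reflexive rank-one modules. The splitting algorithm for $k$ is used to decompose schemes into irreducible and geometrically irreducible components. This is needed to enumerate curves and to compute the relative canonical model.

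\textbf{Step 1: non-$\QQ$-Gorenstein case.} For each $i$, first decide whether $X_{i-1}$ is $\QQ$-Gorenstein. We compute $\cO(mK)$ as a reflexive module (double dual of $\omega^{\otimes m}$) and test local freeness for $m=1,2,\dots$. To make this a decision procedure, we need an a priori bound on the index. I would get it by computing the local class groups at the finitely many singular points from a resolution. If $X_{i-1}$ is not $\QQ$-Gorenstein, we output $X_i=\Proj_{X_{i-1}}\bigoplus_m \cO(mK)$. This ring is finitely generated because $X_{i-1}$ is klt in dimension three (relative canonical models exist). Finite generation lets the algorithm compute generators degree by degree until the truncation stabilizes. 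Stabilization is certified by checking that the resulting Proj has $K$ $\QQ$-Cartier and ample over $X_{i-1}$, and that the map is small. The bigraded global Hom modules are the tool that computes these pushforwards.

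\textbf{Step 2: $\QQ$-Gorenstein case.} Test nefness of $K$ and find an extremal ray. By the cone theorem, $K$-negative extremal rays are spanned by rational curves $C$ with $-K\cdot C\le 6$. I would instead enumerate candidate divisors $H$ and integers $m$, and test whether $mK+H$ (or $H$ alone) is semiample with $K$ not nef on its null locus. It is cleaner to search simultaneously for two things. One is a certificate of nefness of $K$: a multiple $mK+\epsilon A$ that is ample for decreasing $\epsilon$, or, more cleanly, an effective basepoint-freeness check of $m(K+A/t)$ on a bounded family. The other is a supporting nef divisor $L=K+rA$ with $r$ the nef threshold, which is rational by the rationality theorem with bounded denominator. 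I would compute $r$ by bisection over rationals with denominators bounded by the Cartier index, testing ampleness of $K+tA$ via computable Castelnuovo--Mumford regularity and very ampleness checks. The contraction is then $\Proj\bigoplus H^0(mL)$, computed by taking $m$ divisible enough (basepoint free theorem) and applying Stein factorization, which is one of our auxiliary algorithms. This yields an extremal face contraction; the statement only requires a face, so we do not need to isolate a ray.

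\textbf{Step 3: dispatch on the contraction, and termination.} If $\dim$ drops, we output a fibration and stop. If the contraction is divisorial, $X_i$ is its target. If it is small, $X_i$ is the flip $\Proj_Y\bigoplus_m f_*\cO(mK)$, which is again the relative canonical model. It is computed exactly as in Step 1, with existence guaranteed by Mori's/Shokurov's flip theorem, so the degree-by-degree search halts. Termination of the whole sequence follows from termination of threefold flips and Picard number decrease. The non-$\QQ$-Gorenstein steps do not loop, because the output is $\QQ$-factorial-ish in the relevant sense: its $K$ is $\QQ$-Cartier.

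The main obstacle is Step 2: making nefness and the nef threshold \emph{decidable} rather than semi-decidable. I would try first to bound denominators via the rationality theorem and the Cartier index, and to certify ampleness by explicit very ampleness of bounded multiples computed through the Hom-module algorithm.
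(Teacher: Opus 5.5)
\textbf{There is a genuine gap, and it sits in Step 2, which you flag yourself but do not close.} You never obtain a \emph{terminating} test for nefness, either of $K+tA$ with $t>0$ or of $K$ itself.

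For $t>0$: checking ampleness of $K+tA$ through regularity or very ampleness of multiples is only semi-decidable. It halts for $t>\lambda$ but never at $t=\lambda$, where $K+\lambda A$ is nef and not ample, so a bisection driven by it cannot land on $\lambda$. The paper makes nefness of $K+tA$ decidable by the effective base point free theorem. If $N(K+tA)$ is Cartier, then since $tA$ is ample, Matsushita's bound gives an explicit $m$ ($7$ or $6$ in dimension three) such that $K+tA$ is nef if and only if $|mN(K+tA)|$ is base point free. That is a single computable check. Also, the rationality theorem bounds the \emph{numerator} of $\lambda$ in lowest terms by $a(d+1)$, not its denominator by the Cartier index. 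So the candidate set is finite only after you have bracketed $0<\alpha<\lambda\le\beta$, which again requires the exact test at $\alpha$.

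For $t=0$ the problem is worse. Since $\lambda$ can be arbitrarily small, finitely many checks on $K+\epsilon A$ can refute nefness of $K$ but never establish it. The missing idea is abundance for log terminal threefolds: a nef $K$ is semi-ample. Hence the search for $i$ with $\omega^{[ir]}$ globally generated terminates when $K$ is nef. Running it in parallel with the search for $j$ such that $K+2^{-j}A$ is not nef decides the question.

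\textbf{Step 1 and the termination argument also need repair.} Your a priori index bound via local class groups computed from a resolution is unjustified. Klt threefolds need not have isolated singularities, and the computation involves Picard groups of exceptional loci, a Picard-type computation the paper deliberately avoids. The bound is also unnecessary. Note too that a non-$\QQ$-Gorenstein $X_{i-1}$ is not klt. Finite generation holds because it is of klt type, being the target of a $K$-negative birational contraction of a klt variety. Certifying a candidate by checking that its $K$ is $\QQ$-Cartier is once more only semi-decidable.

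The paper never decides whether a target is $\QQ$-Gorenstein. It realizes $\omega_R$ as an ideal $I\cong\omega_R(-d)$, blows up the symbolic powers $I^{(m)}$, and accepts the first blow-up that is small and $S_2$. These two conditions give normality. The relatively ample line bundle from the Rees algebra then agrees with a multiple of $K$ in codimension one, hence everywhere. So $K$ is automatically $\QQ$-Cartier and relatively ample, and uniqueness of such a small modification identifies the relative canonical model. For sufficiently divisible $m$ the test succeeds. Whether the target was already $\QQ$-Gorenstein is read off afterwards by testing whether the resulting map is an isomorphism. In the small case you should also keep the target as a term of the sequence, as the statement requires.

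Finally, ``termination of flips plus Picard number decrease'' is the argument for $\QQ$-factorial varieties and contractions of relative Picard number one, and neither holds here. The paper instead lifts each step $Y_i\to W_i\leftarrow Y_{i+1}$, via Hashizume's construction, to a nonempty block of classical steps on crepant $\QQ$-factorial models. It then invokes termination of threefold flips.
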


There is an important difference between the MMP in this
theorem and the usual/classical MMP.
Firstly, we do not assume that the input variety is \(\QQ\)-factorial.
Secondly, the contracted extremal faces can have arbitrary dimension,
and the contractions can have relative Picard number greater than one.
We call such a contraction a \(K\)-negative fibration when it is not
birational; it need not be a Mori fiber space. If we omit from the
output the final \(K\)-negative fibration, if any, and every
non-\(\QQ\)-Gorenstein target \(W\) of a birational contraction, then the resulting sequence is a \(K\)-MMP
in the sense of \cite[Definition~3.5 and Remark~3.6]{hashizume2025normal}.
Moreover, with the chosen ample divisor, each step is a one-step
\(K\)-MMP with scaling in Hashizume's sense and a step with scaling in
Kollár's sense \cite[Definition~1]{kollar2021relative}.

Note also that our base field is not \(\CC\): its elements must be represented by finite data, with computable arithmetic and decidable equality, and polynomials in one variable must be factored algorithmically. The first condition makes \(k\) a computable field, while the second is the existence of a splitting algorithm; the latter is strictly stronger.
These conditions support Gröbner-basis and irreducible-component computations, and the splitting algorithm is inherited by the extensions arising in primary decomposition, whose algebraic steps are separable because \(k\) has characteristic zero.
They hold for \(\QQ\), its finitely generated extensions, and \(\Qbar\).
We refer the reader to \cite[\S1 and \S2.2]{harrisontrainor2017computable}, where both notions and all of these facts are collected.

In our algorithm, a projective variety over \(k\) is represented as the Proj scheme,
\(\Proj R\), of an explicitly described graded ring
\(R=k[x_{0},\dots,x_{m}]/\langle g_{1},\dots,g_{l}\rangle\).
Here the polynomial ring \(k[x_{0},\dots,x_{m}]\) is graded by specifying
degrees of variables \(x_{0},\dots,x_{m}\) and \(g_{1},\dots,g_{l}\)
are homogeneous polynomials. In particular, we allow the ring \(R\)
not to be standard graded (that is, generated in degree one). This
simplifies the algorithm, since graded rings constructed in the algorithm
are not a priori standard graded.\footnote{It should be noted that any data given in a non-standard graded framework
can be transformed into data in a standard graded framework.
In particular, the varieties \(X_{i}\) obtained as an output of our algorithm can be
described as closed subvarieties of projective spaces by specifying
their defining homogeneous polynomials.}
We use such finite data to represent projective varieties appearing
in the input and output of our algorithm.
For each rational map \(f_{i}\)
in the sequence in the theorem, either
\begin{enumerate}
\item \(f_{i}\) is a morphism, or
\item \(f_{i}\) is birational and \(f^{-1}_{i}\) is a morphism.
\end{enumerate}
A morphism \(f\colon Z\to Y\) of projective varieties is determined
by its graph \(\Gamma_{f}\subset Z\times Y\), which is again a projective
variety.
If we write \(Z=\Proj S\) and \(Y=\Proj R\) with \(S\) and \(R\)
graded rings, then \(\Gamma_{f}\) is most naturally represented
by a bihomogeneous ideal of the bigraded ring \(S\otimes R\).
For each
rational map \(f_{i}\) in an output, either \(f_{i}\) or \(f^{-1}_{i}\)
is represented by bihomogeneous generators of a bihomogeneous ideal.
For this reason, we use bigraded rings as one of the essential data types.

A technical issue in constructing the algorithm is the lack of an
unconditional procedure for computing the Picard number. Assuming
the Tate conjecture, it can be computed using the algorithm of Poonen,
Testa, and van Luijk \cite{poonen2015computing}, but the conjecture
remains open. The Mori cone plays an essential role in the minimal
model program and is a cone in a real vector space of dimension equal
to the Picard number. To stay unconditional, our algorithm computes
neither the Picard number nor the Mori cone, producing each contraction from a canonically produced semi-ample divisor.
On each \(\QQ\)-Gorenstein model \(X_i\), we compute an ample Cartier divisor
\(H_i\) and the nef threshold \(\lambda_i\); the semi-ample divisor
\(K_{X_i}+\lambda_iH_i\) then gives, after Stein factorization, the
contraction of a \(K_{X_i}\)-negative extremal face. If we get a non-\(\QQ\)-Gorenstein birational model, we compute its relative canonical model, which is the counterpart of a flip in our setting. 
Thus every birational iteration is a step of the MMP with scaling of
\(H_i\), but the resulting birational sequence is not a single MMP with
scaling, since we take a new ample divisor on each model. This choice is
made for computational reasons: we choose \(H_i\) to be compatible with
the monograded presentation of \(X_i\); see Remark
\ref{rem:choice-ample-divisor}.

From the theoretical viewpoint, the hardest part of our construction
is an algorithm for Stein factorization. A contraction morphism is
constructed by applying Stein factorization to the morphism associated
to some linear system. To compute a homogeneous coordinate ring of
an intermediate variety produced by Stein factorization, we need a
bigraded version of Smith's result on computation of global Hom modules
\cite{smith2000computing}. Algorithms for Stein factorization and
bigraded global Hom modules would be of independent interest.

The reason why we focus on dimension three is that we need the
termination of flips and the abundance conjecture, which are yet to be
established in dimension \(\ge4\). Termination enters through Proposition
\ref{prop:termination}: the birational steps form an MMP for pairs that
need not be \(\QQ\)-factorial, and their termination in dimension three
rests on termination of ordinary flips there. Abundance enters once, in
the test of whether \(K_{X_{i}}\) is nef.

The theorems of the minimal model program that we use are stated over
\(\CC\), or over an algebraically closed field of characteristic zero in the literature. They
apply to our base field \(k\) as follows. Since \(k\) is countable, so is its algebraic closure \(\overline{k}\). Therefore, these fields
embed into \(\CC\).
For a normal variety \(X\) over \(k\), the base change \(X_{\overline{k}}\) consists of finitely many connected components, which are pairwise isomorphic normal varieties.
Every property we invoke of a variety or of a morphism --- normality, log
terminality, nefness, ampleness, base point freeness, global generation,
smallness, finite generation of a sheaf of graded algebras --- is
preserved and reflected by extension of the base field and can be read
off one component at a time, and every object we construct ---
\(\omega^{[n]}_{X}\), the threshold \(\lambda\), a Stein factorization, a
relative \(\Proj\) --- commutes with base field extension. A statement of
either kind may therefore be verified after base change to
\(\overline{k}\) and then to \(\CC\) and its conclusion carried back to \(k\),
and we do so without further comment. 
Termination of flips and abundance, the two theorems that restrict us to
dimension three, also descend to \(k\).

As a proof of concept, prototype Macaulay2 implementations have been
written for Stein factorization, for computing relative canonical
models, and for driving the MMP algorithm. Each main computational
step has been run on explicit examples; the code, examples, and current
limitations are summarized in Remark \ref{rem:implementation}.

\subsection*{Comparison with the previous version}

An earlier version of this paper asked its input to be \(\QQ\)-factorial
and returned an MMP sequence in the classical sense, every contraction being of
relative Picard number one and \(\QQ\)-factoriality being carried through the program except the targets of small contractions.  
The base field was assumed to be \(\Qbar\).
The algorithm presented there was far more expensive than the present
one. It contained exhaustive searches for curves and divisors as well
as computations of Betti numbers for singular homology. Because of
these features, it seemed to be impossible to implement the previous
algorithm in practice. The present algorithm can be implemented and
each step can be run for simple examples, though further improvement
in efficiency is still needed to get an interesting output for
complicated examples.

\subsection*{Convention}

Throughout the paper, we work over a fixed field \(k\) as above, of
characteristic zero, computable and with a splitting algorithm. A variety means an
integral projective scheme over \(k\), unless otherwise noted. Every ring is
assumed to be equipped with a structure of \(k\)-algebra. Every morphism
of varieties as well as every ring homomorphism is assumed to be a
morphism/homomorphism over \(k\). A product of varieties means a fiber
product over \(k\). A tensor product of rings and modules is also
defined over \(k\).
By \(\NN\), we mean the set of nonnegative integers,
while we denote by \(\ZZ_{>0}\) the set of positive integers. 

\subsection*{Acknowledgements}

The author thanks Osamu Fujino, Kenta Hashizume, Yujiro
Kawamata, Masanori Kobayashi, Vladimir Lazić, Joaquin Moraga, Yusuke
Nakamura, and Shota Okazawa for helpful conversations.
He is
especially indebted to Makoto Enokizono for bringing to his
attention a version of the minimal model program that contracts
faces.

The first two arXiv versions were written by the author, with large
language models used only for English editing. In preparing the
third version, the author also used them to assist with literature
searches, discuss proofs, and draft portions of the text. The author
independently checked all material produced with their assistance.
Claude (Anthropic) and ChatGPT (OpenAI) were used for these purposes.
The Macaulay2 implementation was also developed with the help of
large language models, to a much greater extent, as explained in
Remark \ref{rem:implementation}.

This work was supported by JSPS KAKENHI Grant
Numbers JP21H04994, JP23K25767, and JP24K00519.

\section{Monograded varieties and bigraded varieties}

We assume that every variety is irreducible, reduced and projective
over \(k\), unless otherwise noted. When we discuss algorithms, varieties
and their morphisms are endowed with data representing them. We explain
below types of data that we will use.

\subsection{Monograded varieties\label{subsec:monograded-bigraded}}

Our primary data type to represent varieties is a collection of finitely
many weighted homogeneous polynomials that define a closed subvariety
of a weighted projective space. Let \(\bc=(c_{0},\dots,c_{m})\in\ZZ^{m+1}_{>0}\)
and let \(k[\bx]:=k[x_{0},\dots,x_{m}]\) be a polynomial ring given
with an \(\NN\)-grading by \(\deg(x_{i})=c_{i}\). Its Proj scheme, \(\Proj k[\bx]\),
is the weighted projective space \(\PP(\bc)=\PP(c_{0},\dots,c_{m})\).
A closed subscheme of \(\PP(\bc)\) is written as \(V(\bf)=V(f_{1},\dots,f_{l})\),
where \(\bf=(f_{1},\dots,f_{l})\in k[\bx]^{l}\) is a tuple of homogeneous
polynomials relative to the grading defined above such that the induced
ideal \(\langle\bf\rangle=\langle f_{1},\dots,f_{l}\rangle\) does not contain
the irrelevant ideal \(k[\bx]_{\dagger}:=\langle x_{0},\dots,x_{m}\rangle\).
It is a closed subvariety when \(\langle\bf\rangle\) is prime.
\begin{defn}
\label{defn:monograded-scheme}
A \emph{monograded scheme} means a closed subscheme \(V(\bf)\subset\PP(\bc)\)
endowed with the datum of the defining homogeneous polynomials
\(\bf=(f_{1},\dots,f_{l})\). It is a \emph{monograded variety} when
\(\langle\bf\rangle\) is prime.
\end{defn}

Giving a monograded scheme is equivalent to giving an explicit presentation
\(R=k[\bx]/\langle\bf\rangle\) of its homogeneous coordinate ring \(R\).
The term ``monograded'' is due to the fact that the homogeneous
coordinate ring is graded by \(\ZZ\), a free abelian group of rank
one. (In general, we call rings and modules graded by \(\ZZ\) \emph{monograded},
and call ones graded by \(\ZZ^{2}\) \emph{bigraded}. We do not consider
gradings of ranks higher than two.) If we write \(R=\bigoplus_{i\in\ZZ}R_{i}\),
then \(R_{i}\) denotes the degree-\(i\) part of \(R\). By definition,
we have \(R_{0}=k\). The homogeneous ideal \(R_{\dagger}:=\bigoplus_{i>0}R_{i}\)
is called the \emph{irrelevant ideal}. The underlying set of \(\Proj R\)
is identified with the set of homogeneous prime ideals \(\fp\subset R\)
with \(\fp\nsupseteq R_{\dagger}\).

\subsection{Ambient toric varieties}
As a secondary data type to represent varieties, we also consider
bigraded varieties. Let \(k[\by]:=k[y_{0},\dots,y_{n}]\) be another
polynomial ring with \(d_{j}:=\deg(y_{j})\in\ZZ_{>0}\), and assume
\(n,m\ge1\). We grade the tensor product
\(k[\by]\otimes k[\bx]=k[\by,\bx]\) by \(\NN^{2}\), setting
\[
\deg(y_{j})=(d_{j},0)\text{ and }\deg(x_{i})=(a_{i},c_{i}),
\]
where \(\ba=(a_{0},\dots,a_{m})\in\NN^{m+1}\) is a further datum. 

Write \(\bd=(d_{0},\dots,d_{n})\) and \(\bc=(c_{0},\dots,c_{m})\), let
\(L\subset\ZZ^{n+1}\oplus\ZZ^{m+1}\) be the saturation of the subgroup
generated by \((\bd,\ba)\) and \((\bzero,\bc)\), and put
\[
N:=\bigl(\ZZ^{n+1}\oplus\ZZ^{m+1}\bigr)/L,
\]
a lattice of rank \(n+m\). Let \(\pi\) denote the quotient map \(\ZZ^{n+1}\oplus \ZZ^{m+1}\to N\), let
\[
u_{j}:=\pi(e_{j})\ (0\le j\le n),\qquad v_{i}:=\pi(f_{i})\ (0\le i\le m)
\]
be the images of the standard bases of the two summands, and let
\(\Sigma\) be the collection of the cones
\[
\sigma_{j,i}:=\Cone\bigl(u_{j'}\ (j'\ne j),\ v_{i'}\ (i'\ne i)\bigr)
\qquad(0\le j\le n,\ 0\le i\le m)
\]
in \(N_\RR := N\otimes \RR \) together with all their faces. The kernel of \(\pi \otimes \RR\) is spanned
by \((\bd,\ba)\) and \((\bzero,\bc)\), so that
\begin{equation}\label{eq:fan-relations}
\sum_{j}d_{j}u_{j}+\sum_{i}a_{i}v_{i}=0,\qquad\sum_{i}c_{i}v_{i}=0
\end{equation}
form a basis for all relations among the \(u_{j}\) and \(v_{i}\).
\begin{lem}
\label{lem:twisted-fan}
\(\Sigma\) is a complete simplicial fan in \(N_\RR\) whose rays
are exactly the \(u_{j}\) and the \(v_{i}\).
\end{lem}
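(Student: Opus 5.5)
Our approach is to recognize \(\Sigma\) as a product-like fan: a "twisted" product of the fans of \(\PP(\bd)\) and \(\PP(\bc)\), in the spirit of a weighted projective bundle. We verify the fan axioms directly from the two relations \eqref{eq:fan-relations}.

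First, the rays. Since \(\pi\otimes\RR\) has kernel spanned by \((\bd,\ba)\) and \((\bzero,\bc)\), the \(n+m\) vectors in each \(\sigma_{j,i}\) are linearly independent. A relation among them would be a kernel vector with zero \(e_j\)- and \(f_i\)-coordinates. Write it as \(\alpha(\bd,\ba)+\beta(\bzero,\bc)\). The \(e_j\)-coordinate \(\alpha d_j=0\) forces \(\alpha=0\), and then the \(f_i\)-coordinate \(\beta c_i=0\) forces \(\beta=0\), because \(d_j,c_i>0\). Hence each \(\sigma_{j,i}\) is a full-dimensional simplicial cone. Its rays are among the \(u\)'s and \(v\)'s, and each \(u_{j'}\) and \(v_{i'}\) occurs in some \(\sigma_{j,i}\) (using \(n,m\ge1\)). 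The same independence shows the vectors are pairwise non-proportional and nonzero, so the rays are exactly the \(u_j\) and \(v_i\).

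Second, completeness and the intersection property. The plan is to show that every \(w\in N_\RR\) has, up to the relations, a unique nonnegative representation supported on some \(\sigma_{j,i}\), and that the cones overlap only along faces. Lift \(w\) to \((\bs,\br)\in\RR^{n+1}\oplus\RR^{m+1}\); the lifts are \((\bs,\br)+\alpha(\bd,\ba)+\beta(\bzero,\bc)\).

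We first choose \(\alpha\) so that the \(\bs\)-part becomes nonnegative with a zero at some index \(j\). This forces \(\alpha=\max_{j'}(-s_{j'}/d_{j'})\), which is unique, and \(j\) is any index attaining the maximum. With \(\alpha\) fixed, the \(\br\)-part \(\br+\alpha\ba\) is determined, and we choose \(\beta=\max_{i'}\bigl(-(r_{i'}+\alpha a_{i'})/c_{i'}\bigr)\), which is unique, with \(i\) attaining it.

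This produces a representation \(w=\sum_{j'\ne j}\lambda_{j'}u_{j'}+\sum_{i'\ne i}\mu_{i'}v_{i'}\) with nonnegative coefficients, so \(w\in\sigma_{j,i}\) and \(\Sigma\) is complete. Moreover, any nonnegative representation of \(w\) supported in some \(\sigma_{j',i'}\) must arise from this same \((\alpha,\beta)\). Indeed, \(\alpha\) is forced: the \(\bs\)-part must be nonnegative and vanish somewhere, and the \(\bs\)-part does not depend on \(\beta\). Then \(\beta\) is forced in the same way. So the coefficient vector \((\lambda,\mu)\) is canonical.

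It follows that \(w\in\sigma_{j,i}\cap\sigma_{j',i'}\) exactly when the canonical coefficients vanish at \(j,j',i,i'\). Hence the intersection equals the face of each cone spanned by the rays with indices outside \(\{j,j'\}\) and \(\{i,i'\}\). Thus \(\Sigma\), the set of all faces of the \(\sigma_{j,i}\), is a fan.

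The main obstacle is this uniqueness step. The order matters: \(\alpha\) must be fixed before \(\beta\), because \((\bzero,\bc)\) does not affect the \(\bs\)-coordinates, while \((\bd,\ba)\) affects both the \(\bs\)- and \(\br\)-coordinates. The twisting datum \(\ba\) enters only through the shift \(\br\mapsto\br+\alpha\ba\), and since \(\alpha\) is determined beforehand, this shift causes no ambiguity. Everything takes place in \(N_\RR\), so the saturation defining \(L\) plays no role.
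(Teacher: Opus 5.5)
Your argument is correct and is essentially the paper's: the paper also lifts to the cones \(\tau_{j,i}\) in \(\RR^{n+1}\oplus\RR^{m+1}\), shows that \(\pi\) is a bijection from their union onto \(N_\RR\) by fixing first \(\alpha\) and then \(\beta\) exactly as you do, and transports the fan axioms, which is your ``canonical coefficients'' argument in different words. One slip: the \(u_j\) and \(v_i\) need not be pairwise non-proportional (for \(m=1\) the second relation gives \(c_0v_0+c_1v_1=0\)), but ``exactly these rays'' only requires that no two of them be positive multiples of each other, and that already follows from your uniqueness of canonical coefficients.
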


\begin{proof}
Use \(\pi\) also for the induced
surjection \(\pi\otimes \RR \colon \RR^{n+1}\oplus\RR^{m+1}\to N_{\RR}\). Let
\[
\tau_{j,i}:=\Cone\bigl(e_{j'}\ (j'\ne j),\ f_{i'}\ (i'\ne i)\bigr)
\qquad(0\le j\le n,\ 0\le i\le m)
\]
and let \(\Sigma'\) be the simplicial fan in \(\RR^{n+1}\oplus\RR^{m+1}\) formed by all the faces of
these cones. Its support \(|\Sigma'|\) consists of the points where all coordinates are nonnegative, at least one coordinate is zero in the first $n+1$ entries, and the same holds for the last $m+1$ entries.

We claim that \(\pi\) restricts to a bijection \(|\Sigma'|\to N_{\RR}\).
This is equivalent to saying that for every
\((\bs,\br)\in\RR^{n+1}\oplus\RR^{m+1}\) there exists a unique
\((\alpha,\beta)\in\RR^{2}\) such that
\[
(\bs,\br)+\alpha(\bd,\ba)+\beta(\bzero,\bc)\in|\Sigma'|.
\]
That is so, and the pair is explicitly determined by
\begin{align*}
  \alpha &=\min\{\alpha'\mid\bs+\alpha'\bd\in\RR_{\ge0}^{n+1}\},\\
  \beta &=\min\{\beta'\mid\br+\alpha\ba+\beta'\bc\in\RR_{\ge0}^{m+1}\}.
\end{align*}

The collection \(\Sigma\) consists of the images of the cones in
\(\Sigma'\). Being injective on \(|\Sigma'|\), the map \(\pi\) carries a cone
of \(\Sigma'\) to a strongly convex simplicial cone of the same
dimension, and the intersection of two of them to the intersection of
their images, so \(\Sigma\) is again a simplicial fan, its maximal cones
have dimension \(n+m\), and its rays are the images of the rays of
\(\Sigma'\), that is, the \(u_{j}\) and the \(v_{i}\). Since \(|\Sigma|=\pi(|\Sigma'|)=N_{\RR}\), the fan \(\Sigma\) is complete.
\end{proof}

\begin{defn}
  We write \(\biProj k[\by,\bx]\) for the complete \(\QQ\)-factorial toric variety associated to \(\Sigma\).
\end{defn}

This notation is justified by the multigraded Proj construction as follows.
We take as the irrelevant ideal of the bigraded ring \(k[\by,\bx]\)
\[
k[\by,\bx]_{\dagger}:=\langle y_{j}x_{i}\mid0\le j\le n,\,0\le i\le m\rangle,
\]
and glue the affine schemes
\[
\Spec\bigl((k[\by,\bx]_{y_{j}x_{i}})_{\bzero}\bigr)
\qquad(0\le j\le n,\ 0\le i\le m).
\]
Here the subscript \(\bzero\) denotes the degree-\((0,0)\) part of the bigraded ring \(k[\by,\bx]_{y_{j}x_{i}}\).
This applies the gluing procedure for multigraded Proj
\cite[Construction 3.1]{mayeux2025onmultigraded} to the selected relevant
elements \(y_jx_i\).  Here we do not mean the full Brenner--Schr\"oer
multihomogeneous spectrum, which uses all relevant elements
\cite[Section~2]{brenner2003amplefamilies} and can be larger and
nonseparated; the distinction is discussed below.  In the Cox construction
for \(\Sigma\), the monomial complementary to the maximal cone
\(\sigma_{j,i}\) is \(y_{j}x_{i}\) \cite[\S5.1]{cox2011toric}, and there is
a natural identification
\[
\bigl(k[\by,\bx]_{y_{j}x_{i}}\bigr)_{\bzero}
 \cong k[\sigma_{j,i}^{\vee}\cap N^{\vee}].
\]
Thus these are precisely the affine toric charts of \(\Sigma\), with the
same overlap maps, so their gluing is the toric variety \(X_{\Sigma}\).
For \(\ba=\bzero\) the first
relation of \eqref{eq:fan-relations} involves only the \(u_{j}\), so
\(N\) splits and \(\Sigma\) is a product of fans so that
\(\biProj k[\by,\bx]=\PP(\bd)\times\PP(\bc)\). In general the variety is
proper over \(\PP(\bd)\) with fiber \(\PP(\bc)\), and it is a product only
when the ratios \(a_{i}/c_{i}\) are all equal.

The Cox construction identifies \(\biProj k[\by,\bx]\) with the
geometric quotient
\[
\biProj k[\by,\bx]
 \cong
\bigl(\Spec k[\by,\bx]\setminus
V(k[\by,\bx]_{\dagger})\bigr)/\GG_m^2
\]
\cite[Theorem 5.1.11]{cox2011toric}, where the action is determined by the bigrading on \(k[\by,\bx]\).

Since \(\Sigma\) is simplicial, a torus-invariant \(\QQ\)-divisor is
\(\QQ\)-Cartier and corresponds to its support function \(\psi\), which is
determined by the values \(\psi(u_{j})\) and \(\psi(v_{i})\). Consider the
exact sequence derived from the definition of \(N\),
\[
0\to L_{\QQ}\to\QQ^{n+1}\oplus\QQ^{m+1}\to N_{\QQ}\to0,
\]
and its dual sequence
\[
0\to(N_{\QQ})^{\vee}\to\QQ^{n+1}\oplus\QQ^{m+1}\to(L_{\QQ})^{\vee}\to0.
\]
The middle term of the last sequence is regarded as the space of
torus-invariant \(\QQ\)-divisors; the divisor corresponding to \(\psi\) is
identified with the element
\[
((-\psi(u_{j})),(-\psi(v_{i})))\in\QQ^{n+1}\oplus\QQ^{m+1}.
\]
Its quotient \((L_{\QQ})^{\vee}\) is then identified with the space
\(\Cl(\biProj k[\by,\bx])\otimes\QQ\) of \(\QQ\)-linear equivalence
classes of \(\QQ\)-divisors. We fix an identification \((L_{\QQ})^{\vee}=\QQ^{2}\) via the
dual basis of \((\bd,\ba)\), \((\bzero,\bc)\), and call a class
\(\bw=(w_{1},w_{2})\) \emph{ample} when a positive multiple of it is the
class of an ample Cartier divisor.
\begin{lem}
\label{lem:ample-weight}
A class \(\bw=(w_{1},w_{2})\) in \(\Cl(\biProj k[\by,\bx])\otimes\QQ\) is
ample if and only if
\[
w_{2}>0\qquad\text{and}\qquad w_{1}>w_{2}\max_{i}\frac{a_{i}}{c_{i}}.
\]
In particular \(\biProj k[\by,\bx]\) is projective, the class
\(\bw=(1+\max_{i}a_{i},1)\) being ample.
\end{lem}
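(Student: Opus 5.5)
We use the standard toric ampleness criterion for a complete simplicial fan: a torus-invariant \(\QQ\)-Cartier divisor with support function \(\psi\) is ample if and only if \(\psi\) is strictly convex with respect to \(\Sigma\). On each maximal cone \(\sigma_{j,i}\), \(\psi\) agrees with a linear function \(m_{j,i}\). Strict convexity means that for each maximal cone \(\sigma_{j,i}\), the two rays not in it, namely \(u_j\) and \(v_i\), satisfy \(\psi(u_j)>m_{j,i}(u_j)\) and \(\psi(v_i)>m_{j,i}(v_i)\) (with the sign convention \(D=-\sum\psi(\text{rays})\,D_\rho\)). Equivalently, it suffices to check the wall inequalities across each codimension-one cone. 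By Lemma \ref{lem:twisted-fan} the fan is simplicial and complete, and its walls are of two kinds. The wall \(\sigma_{j,i}\cap\sigma_{j',i}\) (\(j\neq j'\)) omits \(u_j,u_{j'},v_i\). The wall \(\sigma_{j,i}\cap\sigma_{j,i'}\) (\(i\ne i'\)) omits \(u_j,v_i,v_{i'}\).

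For each wall we compute the linear relation among its rays and the two opposite rays, using the basis relations \eqref{eq:fan-relations}. For a \(u\)-type wall we use the relation \(\sum_j d_ju_j+\sum_i a_iv_i=0\) combined with a multiple of \(\sum c_iv_i=0\) so as to kill \(v_i\):
\[
d_ju_j+d_{j'}u_{j'}+\sum_{j''\ne j,j'}d_{j''}u_{j''}+\sum_{i'}\Bigl(a_{i'}-\tfrac{a_i}{c_i}c_{i'}\Bigr)v_{i'}=0 .
\]
For a \(v\)-type wall the relation is just \(\sum c_{i''}v_{i''}=0\) (no \(u\)'s). The Reid-style wall criterion says the divisor is ample iff, for each wall relation \(\alpha u+\alpha' u'+\sum(\text{wall rays})=0\) with \(\alpha,\alpha'>0\), the pairing of the divisor class with the corresponding curve class is positive. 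That pairing is \(\sum(\text{coefficient})\cdot(\text{divisor coefficient})\), i.e.\ the class \(\bw\) evaluated on the relation vector viewed in \(L_\QQ\).

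Concretely, the relation vectors lie in \(L_\QQ\), spanned by \((\bd,\ba)\) and \((\bzero,\bc)\). The \(u\)-wall relation is \((\bd,\ba)-\frac{a_i}{c_i}(\bzero,\bc)\) and the \(v\)-wall relation is \((\bzero,\bc)\). In the dual basis, \(\bw\) pairs with them to give \(w_1-\frac{a_i}{c_i}w_2\) and \(w_2\) respectively. Ampleness is therefore equivalent to \(w_2>0\) and \(w_1>\frac{a_i}{c_i}w_2\) for all \(i\), which is the claim. The main obstacle is getting signs and the identification of \(\Cl\otimes\QQ\) with \((L_\QQ)^\vee\) right, so that the intersection number of \(\bw\) with the torus-invariant curve of a wall is exactly this pairing with the wall relation (up to a positive factor). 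We will verify this via the standard formula \(D\cdot V(\tau)=\) (coefficient) computed from the wall relation as in \cite[\S6.4]{cox2011toric}. As a sanity check, for \(\ba=\bzero\) the criterion reduces to \(w_1,w_2>0\), matching \(\PP(\bd)\times\PP(\bc)\).

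Finally, \((1+\max a_i,1)\) satisfies \(w_2=1>0\) and \(w_1=1+\max_i a_i>\max_i a_i\ge\max_i a_i/c_i\) since \(c_i\ge1\). So it is ample, and the complete toric variety admitting an ample class is projective.
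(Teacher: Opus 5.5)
Your proof is correct, but it runs dual to the paper's. The paper never enumerates walls or computes intersection numbers. It applies the strict convexity criterion \cite[Theorem 6.1.14]{cox2011toric} one maximal cone at a time. Since only \(u_j\) and \(v_i\) lie outside \(\sigma_{j,i}\), it evaluates the linear function \(m\) agreeing with \(\psi\) on \(\sigma_{j,i}\) at these two rays using \eqref{eq:fan-relations}. This gives \(m(v_i)+p_i=w_2/c_i\) and \(m(u_j)+q_j=(w_1-\frac{a_i}{c_i}w_2)/d_j\), where \(p_i=-\psi(v_i)\) and \(q_j=-\psi(u_j)\).

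You instead list the two kinds of walls and find the wall relations \((\bzero,\bc)\) and \((\bd,\ba)-\frac{a_i}{c_i}(\bzero,\bc)\) in \(L_\QQ\). You then pair them with \(\bw\), using the intersection formula of \cite[\S6.4]{cox2011toric} and the toric Kleiman criterion. The arithmetic is in fact identical. The relation expressing an outside ray of \(\sigma_{j,i}\) through the rays of \(\sigma_{j,i}\) must have zero coefficient on the other outside ray, so it is one of your wall relations. The paper's two quantities are your two pairings divided by \(c_i\) and \(d_j\) respectively.

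The paper's route is more economical and rests on a single cited criterion. Yours additionally identifies the classes of the torus-invariant curves. It therefore exhibits the Mori cone as spanned by the fiber class \((\bzero,\bc)\) and \((\bd,\ba)-\max_i\frac{a_i}{c_i}(\bzero,\bc)\). The price is that you need Kleiman's criterion in its toric form, which is valid for complete toric varieties that are not known in advance to be projective. You should cite it in that form, since projectivity is part of your conclusion.

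One slip to fix: your preliminary description of strict convexity has the inequalities reversed. With \(D=-\sum_\rho\psi(u_\rho)D_\rho\), the conditions are \(m_{j,i}(u_j)>\psi(u_j)\) and \(m_{j,i}(v_i)>\psi(v_i)\), as in the paper. Since you then argue through walls, this does not affect your conclusion.
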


\begin{proof}
Let \(\psi\) be the support function of a \(\QQ\)-divisor representing
\(\bw\) and put \(q_{j}:=-\psi(u_{j})\) and \(p_{i}:=-\psi(v_{i})\), so that
\[
w_{1}=\sum_{j}d_{j}q_{j}+\sum_{i}a_{i}p_{i},\qquad w_{2}=\sum_{i}c_{i}p_{i}
\]
by the identification just made. Then \(\bw\) is ample if and only if \(\psi\)
is strictly convex \cite[Theorem 6.1.14]{cox2011toric}.

Fix a maximal cone \(\sigma_{j,i}\) and let \(m\) be the linear function
agreeing with \(\psi\) on it, so that
\[
m(u_{j'})=-q_{j'}\ (j'\ne j),\qquad m(v_{i'})=-p_{i'}\ (i'\ne i).
\]
By Lemma \ref{lem:twisted-fan} the rays outside \(\sigma_{j,i}\) are
\(u_{j}\) and \(v_{i}\), so strict convexity asks that \(m(u_{j})>-q_{j}\)
and \(m(v_{i})>-p_{i}\).

Applying \(m\) to the second relation of \eqref{eq:fan-relations} gives
\(c_{i}m(v_{i})=\sum_{i'\ne i}c_{i'}p_{i'}=w_{2}-c_{i}p_{i}\), that is,
\[
m(v_{i})=\frac{w_{2}}{c_{i}}-p_{i},
\]
so the condition at \(v_{i}\) reads \(w_{2}>0\). Applying \(m\) to the first
relation and substituting this value,
\begin{align*}
d_{j}m(u_{j}) & =\sum_{j'\ne j}d_{j'}q_{j'}+\sum_{i'\ne i}a_{i'}p_{i'}-a_{i}m(v_{i})\\
 & =\sum_{j'\ne j}d_{j'}q_{j'}+\sum_{i'}a_{i'}p_{i'}-\frac{a_{i}}{c_{i}}w_{2}\\
 & =w_{1}-\frac{a_{i}}{c_{i}}w_{2}-d_{j}q_{j},
\end{align*}
that is, \(m(u_{j})=\bigl(w_{1}-(a_{i}/c_{i})w_{2}\bigr)/d_{j}-q_{j}\),
so the condition at \(u_{j}\) reads \(w_{1}>(a_{i}/c_{i})w_{2}\). This does
not depend on \(j\), and letting \(\sigma_{j,i}\) range over the maximal
cones, the two conditions together are the inequalities of the
statement. The last assertion holds because \(c_{i}\ge1\), so that
\(a_{i}/c_{i}\le a_{i}\).
\end{proof}

\subsection{Bigraded varieties}

A bihomogeneous ideal of \(k[\by,\bx]\) that does not contain the
irrelevant ideal \(k[\by,\bx]_{\dagger}\) defines a closed subvariety of
\(\biProj k[\by,\bx]\), and every closed subvariety is obtained in this
way \cite[Proposition 5.2.4]{cox2011toric}.  In particular, closed
subvarieties correspond to bihomogeneous prime ideals that do not contain
\(k[\by,\bx]_{\dagger}\).

\begin{defn}
\label{defn:bigraded-scheme}
A \emph{bigraded scheme} means a closed subscheme
\[
V(\bh)=V(h_{1},\dots,h_{r})
\]
of \(\biProj k[\by,\bx]\) endowed with
the datum of the defining bihomogeneous polynomials
\(\bh=(h_{1},\dots,h_{r})\) such that
\(\langle\bh\rangle\nsupseteq k[\by,\bx]_{\dagger}\). It is a
\emph{bigraded variety} when \(\langle\bh\rangle\) is prime.
\end{defn}

To a bigraded scheme \(V(\bh)\), we associate its bihomogeneous coordinate
ring \(S=k[\by,\bx]/\langle\bh\rangle\), with the induced bigrading, and
put
\[
S_{\dagger}:=k[\by,\bx]_{\dagger}\cdot S
=\langle y_{j}x_{i}\mid0\le j\le n,\ 0\le i\le m\rangle S.
\]
As above, we regard \(V(\bh)=\biProj S\) as a multigraded Proj of \(S\),
using the selected charts determined by \(S_{\dagger}\).

\subsection{Transforming bigraded varieties into monograded
ones\label{subsec:Segre-isomorphisms}}

Let \(\bw=(w_{1},w_{2})\in\ZZ^{2}\) be a primitive integral vector in
the ample cone, one being given by Lemma \ref{lem:ample-weight}. For
a bigraded ring \(S\) we call
\[
S^{(\bw)}:=\bigoplus_{t\in\NN}S_{t\bw}
\]
its \emph{\(\bw\)-diagonal}; it is a monograded ring, \(S_{t\bw}\) being
its degree-\(t\) part and \(S^{(\bw)}_{\dagger}=\bigoplus_{t>0}S_{t\bw}\)
its irrelevant ideal. In the special case \(d_{j}=c_{i}=1\) for all \(j\)
and \(i\), so that \(\deg(y_{j})=(1,0)\) and \(\deg(x_{i})=(a_{i},1)\), an
isomorphism of the form below is recorded in
\cite[\S1.4, p.~12, immediately before Lemma 1.4.2]{lavila1999ondiagonals}.
For an integral affine variety over an algebraically closed field,
\cite[Lemma 2.7 and the paragraph following it]{berchtold2006gitequivalence}
identifies the \(\Proj\) of a \(\bw\)-diagonal with a GIT quotient. The
following result treats the weights \(d_{j}\) and \(c_{i}\) occurring here,
and the chart argument in its proof needs neither hypothesis.

\begin{prop}
\label{prop:diagonal-proj}
Let \(S=k[\by,\bx]/\langle\bh\rangle\) be the bihomogeneous coordinate
ring of a bigraded scheme. There is a canonical isomorphism
\begin{equation}\label{eq:diagonal}
\Proj S^{(\bw)}\cong\biProj S.
\end{equation}
Moreover, if \(\fp\subset S\) is a bihomogeneous prime ideal with
\(\fp\nsupseteq S_{\dagger}\) and
\(\fp^{(\bw)}:=\fp\cap S^{(\bw)}\), then \(\fp^{(\bw)}\) is a homogeneous
prime ideal not containing \(S^{(\bw)}_{\dagger}\), and
\[
V(\fp)\cong\Proj\bigl(S^{(\bw)}/\fp^{(\bw)}\bigr)
\]
under \eqref{eq:diagonal}.
\end{prop}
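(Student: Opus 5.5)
We prove the statement chart by chart, comparing the affine charts of the two sides directly. On the \(\biProj\) side the charts are \(\Spec (S_{y_jx_i})_{\bzero}\), indexed by the elements \(y_jx_i\), which have bidegree \((d_j+a_i,c_i)\). On the \(\Proj\) side the charts are \(\Spec (S^{(\bw)}_g)_0\) for homogeneous \(g\in S^{(\bw)}_{\dagger}\).

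The first step is to produce elements of \(S^{(\bw)}\) that are powers of \(y_jx_i\) up to harmless factors. For each pair \((j,i)\) we look for exponents \(\alpha,\beta\ge 0\), with \(\alpha\) or \(\beta\) positive, and \(t>0\) such that
\[
g_{j,i}:=y_j^{\alpha}x_i^{\beta}\quad\text{has bidegree } t\bw, \qquad\text{i.e.}\qquad \alpha d_j+\beta a_i=tw_1,\ \beta c_i=tw_2 .
\]
Solving gives \(\beta=tw_2/c_i\) and \(\alpha d_j=t(w_1-w_2a_i/c_i)\). By Lemma \ref{lem:ample-weight} we have \(w_2>0\) and \(w_1>w_2a_i/c_i\), so both exponents are positive rationals, and clearing denominators by choosing \(t\) suitably makes them positive integers. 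Hence \(g_{j,i}\) lies in \(S_{t\bw}\), and \(g_{j,i}\) and \(y_jx_i\) have the same radical in the localization, so \(S_{g_{j,i}}=S_{y_jx_i}\).

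Next we compare the degree-zero parts on each chart, aiming for
\[
\bigl(S^{(\bw)}_{g_{j,i}}\bigr)_0=\bigl(S_{g_{j,i}}\bigr)_{\bzero}.
\]
The inclusion \(\subseteq\) is clear. For \(\supseteq\), a bidegree-\(\bzero\) element has the form \(f/g_{j,i}^{s}\) with \(f\) of bidegree \(st\bw\), and such an \(f\) lies in \(S^{(\bw)}\). The two families of charts then glue compatibly, since the overlaps are localizations at products \(g_{j,i}g_{j',i'}\) and the same argument applies there.

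It remains to show that the \(D_+(g_{j,i})\) cover \(\Proj S^{(\bw)}\), which is the main obstacle. It amounts to showing that \(\sqrt{\langle g_{j,i}\rangle}\supseteq S^{(\bw)}_{\dagger}\) inside \(S^{(\bw)}\). Take a homogeneous \(f\in S_{t\bw}\) with \(t>0\) and expand it into bihomogeneous monomials \(y^{\mu}x^{\nu}\) of bidegree \(t\bw\). We must show that some power of each such monomial lies in the ideal generated by the \(g_{j,i}\). Since \(w_2>0\), some \(x_i\) divides the monomial, and since \(w_1>w_2\max a_i/c_i\), some \(y_j\) must also occur. The degree inequality forces this: if \(\mu=0\), the first coordinate of the bidegree would be \(\sum\nu_ia_i\le \max(a_i/c_i)\sum \nu_ic_i<w_1t\cdot(w_2/w_1)\cdot(w_1/w_2)\), a contradiction. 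So every monomial of positive \(\bw\)-degree lies in the radical of \(\langle y_jx_i\rangle\) in \(S\). We then need to pass to an identity inside \(S^{(\bw)}\), raising the monomial to a power and dividing by a suitable \(g_{j,i}\). To do this we show \(m^{N}/g_{j,i}\in S^{(\bw)}\) for large \(N\) via a convexity (Gordan-type) argument on exponents; this is the delicate combinatorial point.

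For the prime-ideal statement, \(\fp^{(\bw)}\) is prime as the contraction of a prime ideal, and it is homogeneous. It does not contain \(S^{(\bw)}_{\dagger}\): some \(y_jx_i\notin\fp\), and hence \(g_{j,i}\notin\fp\) because \(\fp\) is prime. The isomorphism \(V(\fp)\cong\Proj(S^{(\bw)}/\fp^{(\bw)})\) follows by applying the first part to \(S/\fp\), since \((S/\fp)^{(\bw)}=S^{(\bw)}/\fp^{(\bw)}\) and the chart isomorphisms are compatible with quotients.
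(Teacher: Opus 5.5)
Your argument is essentially the paper's proof. With \(t=d_jc_i\), your elements are exactly \(F_{j,i}=y_j^{c_iw_1-a_iw_2}x_i^{d_jw_2}\), and the chart identification \(((S^{(\bw)})_{F_{j,i}})_0\cong(S_{y_jx_i})_{\bzero}\), the covering by radicals and the treatment of \(\fp\) all match the paper. The step you call delicate needs no Gordan-type argument: if a monomial \(m\) of bidegree \(s\bw\) (\(s>0\)) is divisible by \(y_jx_i\), then for \(N\) at least both exponents of \(g_{j,i}\) the quotient \(m^N/g_{j,i}\) is a monomial of bidegree \((Ns-d_jc_i)\bw\), so it lies in \(S^{(\bw)}\) as soon as \(Ns\ge d_jc_i\).
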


\begin{proof}
Put \(P:=k[\by,\bx]\). For every \(j\)
and \(i\), set
\[
F_{j,i}:=
y_{j}^{\,c_{i}w_{1}-a_{i}w_{2}}x_{i}^{\,d_{j}w_{2}}.
\]
Both exponents are positive by Lemma \ref{lem:ample-weight}, and
\[
\deg(F_{j,i})
=\bigl(d_{j}c_{i}w_{1},d_{j}c_{i}w_{2}\bigr)
=d_{j}c_{i}\bw.
\]
Thus \(F_{j,i}\in P^{(\bw)}\) is homogeneous of degree \(d_{j}c_{i}\);
we use the same notation for its image in \(S^{(\bw)}\).

We claim that the \(F_{j,i}\) generate \(S^{(\bw)}_{\dagger}\) up to
radical. Let \(M:=\by^{\bbeta}\bx^{\balpha}\) be a
monomial of bidegree \(t\bw\) with \(t>0\). Its second degree shows that
some \(\alpha_{i}\) is positive. If all the \(\beta_{j}\) were zero, then
\[
\frac{w_{1}}{w_{2}}
=\frac{\sum_{i}a_{i}\alpha_{i}}{\sum_{i}c_{i}\alpha_{i}}
\le\max_{i}\frac{a_{i}}{c_{i}},
\]
contrary to Lemma \ref{lem:ample-weight}. Hence \(M\) is divisible by
some \(y_{j}x_{i}\). For a sufficiently large \(N\), we can write
\[
M^{N}=F_{j,i}M'
\]
with \(M'\) a monomial of bidegree \((Nt-d_{j}c_{i})\bw\), and hence
\(M'\in P^{(\bw)}\). The ideal \(S^{(\bw)}_{\dagger}\) is radical and
generated by these monomials \(M\), so it follows that
\[
\sqrt{\langle F_{j,i}\mid j,i\rangle S^{(\bw)}}=S^{(\bw)}_{\dagger}.
\]
Consequently, the affine opens \(D_{+}(F_{j,i})\) cover
\(\Proj S^{(\bw)}\).

Since \(F_{j,i}\) is a product of positive powers of \(y_{j}\) and
\(x_{i}\), we have a canonical graded isomorphism
\[
S_{F_{j,i}}\xrightarrow{\sim}S_{y_{j}x_{i}}.
\]
Moreover, the natural map from the diagonal localization to the
degree-zero part of the bigraded localization is an isomorphism:
\[
\bigl((S^{(\bw)})_{F_{j,i}}\bigr)_{0}
\xrightarrow{\sim}\bigl(S_{F_{j,i}}\bigr)_{\bzero}.
\]
Combining these two
isomorphisms gives
\[
\bigl((S^{(\bw)})_{F_{j,i}}\bigr)_{0}
\cong\bigl(S_{y_{j}x_{i}}\bigr)_{\bzero}.
\]
These identifications give isomorphisms
\(D_{+}(F_{j,i})\cong D_{+}(y_{j}x_{i})\) that commute with
further localization. They therefore glue to \eqref{eq:diagonal}.

Finally, let \(\fp\nsupseteq S_{\dagger}\). Then some \(y_{j}x_{i}\)
does not belong to \(\fp\), and primeness gives \(F_{j,i}\notin\fp\).
Since \(F_{j,i}\) lies in \(S^{(\bw)}_{\dagger}\), the ideal \(\fp^{(\bw)}\)
does not contain \(S^{(\bw)}_{\dagger}\). It is prime, being the
contraction of \(\fp\) along the inclusion \(S^{(\bw)}\subset S\), and it
is homogeneous because \(\fp\) is bihomogeneous and the degree-\(t\) part
of \(S^{(\bw)}\) is \(S_{t\bw}\). The natural graded isomorphism
\[
S^{(\bw)}/\fp^{(\bw)}\cong(S/\fp)^{(\bw)}
\]
and the first assertion applied to \(S/\fp\) give the last assertion.
\end{proof}

For \(\ba=\bzero\) and \(\bw=(1,1)\), the \(\bw\)-diagonal of \(k[\by,\bx]\)
is the Segre product
\[
k[\by]\sharp k[\bx]=\bigoplus_{t\in\NN}k[\by]_{t}\otimes k[\bx]_{t},
\]
\(\biProj k[\by,\bx]\) is the product \(\PP(\bd)\times\PP(\bc)\), and
\eqref{eq:diagonal} specializes to the classical isomorphism between
\(\Proj\) of a Segre product and the product of the two \(\Proj\)'s
\cite[Chapter II, Exercise 5.11]{hartshorne1977algebraic}.

To compute an explicit presentation of \(T:=k[\by,\bx]^{(\bw)}\),
consider the monoid
\[
A_{\bw}:=\left\{(\bbeta,\balpha)\in\NN^{n+1}\times\NN^{m+1}\bigmid w_{2}\Bigl(\sum^{n}_{j=0}d_{j}\beta_{j}+\sum^{m}_{i=0}a_{i}\alpha_{i}\Bigr)=w_{1}\sum^{m}_{i=0}c_{i}\alpha_{i}\right\}
\]
of the exponents of those monomials \(\by^{\bbeta}\bx^{\balpha}\) whose
bidegree lies on the ray spanned by \(\bw\); since \(\gcd(w_{1},w_{2})=1\),
that bidegree is then \(t\bw\) for some \(t\in\NN\). This monoid consists
of the lattice points in the rational polyhedral cone in \(\RR^{n+m+2}\)
defined as the intersection of \((\RR_{\ge0})^{n+m+2}\) and the
hyperplane displayed above. As is well known, we can compute the
Hilbert basis of this monoid, that is, the (unique) minimal generating
set.
\begin{lem}
\label{lem:diagonal-generators}
Let \((\bbeta_{1},\balpha_{1}),\dots,(\bbeta_{s},\balpha_{s})\in\NN^{n+1}\times\NN^{m+1}\)
be the Hilbert basis of the monoid \(A_{\bw}\). Then the \(\bw\)-diagonal
\(T=k[\by,\bx]^{(\bw)}\) is the \(k\)-subalgebra of \(k[\by,\bx]\) generated
by the monomials
\[
\by^{\bbeta_{1}}\bx^{\balpha_{1}},\dots,\by^{\bbeta_{s}}\bx^{\balpha_{s}}.
\]
\end{lem}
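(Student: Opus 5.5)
The plan is to reduce the statement to a fact about monomials. Write \(P:=k[\by,\bx]\). Every element of \(P\) is uniquely a \(k\)-linear combination of monomials, and each monomial is bihomogeneous. Hence the \(\bw\)-diagonal \(T=\bigoplus_{t\in\NN}P_{t\bw}\) is spanned as a \(k\)-vector space by the monomials \(\by^{\bbeta}\bx^{\balpha}\) whose bidegree lies in \(\NN\bw\).

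The first step is to identify these exponents with \(A_{\bw}\). The bidegree of \(\by^{\bbeta}\bx^{\balpha}\) is
\[
\Bigl(\sum_{j}d_{j}\beta_{j}+\sum_{i}a_{i}\alpha_{i},\ \sum_{i}c_{i}\alpha_{i}\Bigr).
\]
If this equals \(t\bw\), the defining equation of \(A_{\bw}\) clearly holds. Conversely, suppose \((\bbeta,\balpha)\in A_{\bw}\). Write the bidegree as \((D_{1},D_{2})\), so that \(w_{2}D_{1}=w_{1}D_{2}\). Since \(\gcd(w_{1},w_{2})=1\), we get \(w_{2}\mid D_{2}\); set \(t:=D_{2}/w_{2}\in\NN\). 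Note that \(w_{2}>0\) by Lemma \ref{lem:ample-weight}. Then \(D_{1}=w_{1}t\), so the bidegree is \(t\bw\).

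The key point is that the map sending \((\bbeta,\balpha)\) to \(\by^{\bbeta}\bx^{\balpha}\) is a monoid isomorphism from \(A_{\bw}\) onto the multiplicative monoid of monomials in \(T\). Consequently \(T\) is exactly the monoid algebra \(k[A_{\bw}]\), realized inside \(P\).

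With this in hand, generation follows directly. Any element of \(A_{\bw}\) is an \(\NN\)-linear combination of the Hilbert basis elements \((\bbeta_{l},\balpha_{l})\). Therefore every monomial of \(T\) is a product of the monomials \(\by^{\bbeta_{l}}\bx^{\balpha_{l}}\), and so \(T\) is contained in the subalgebra they generate. The reverse inclusion holds because those monomials lie in \(T\) and \(T\) is a subalgebra; it is closed under products since \(P_{t\bw}P_{t'\bw}\subset P_{(t+t')\bw}\).

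There is no genuine obstacle here. The only point needing care is the divisibility argument showing that points of \(A_{\bw}\) have bidegree in \(\NN\bw\), and not merely on the rational ray spanned by \(\bw\). This is exactly where primitivity of \(\bw\) is used. One might also note that \(A_{\bw}\) is finitely generated by Gordan's lemma, so the Hilbert basis is finite, but this is not needed for the equality itself.
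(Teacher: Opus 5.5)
Your proof is correct and takes essentially the same route as the paper. Both show that \(T\) is spanned by the monomials with exponents in \(A_{\bw}\), then decompose each exponent into Hilbert basis elements to write every such monomial as a product of the generators; the only difference is that you spell out the \(\gcd(w_{1},w_{2})=1\) argument placing these bidegrees in \(\NN\bw\), which the paper records in the text just before the lemma.
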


\begin{proof}
It is easy to see that \(T\) is generated by the monomials
\(\by^{\bbeta}\bx^{\balpha}\), \((\bbeta,\balpha)\in A_{\bw}\), as a
\(k\)-module. Every element \((\bbeta,\balpha)\in A_{\bw}\) is written as
\[
(\bbeta,\balpha)=\sum^{s}_{i=1}\nu_{i}(\bbeta_{i},\balpha_{i})\quad(\nu_{i}\in\NN).
\]
Thus, we can write
\(\by^{\bbeta}\bx^{\balpha}=\prod_{i}(\by^{\bbeta_{i}}\bx^{\balpha_{i}})^{\nu_{i}}\),
which shows the lemma.
\end{proof}

The lemma enables us to explicitly compute \(T\) as the image of the map
\begin{equation}\label{eq:Segre}
k[Z_{1},\dots,Z_{s}]\to k[\by,\bx],\quad Z_{i}\mapsto\by^{\bbeta_{i}}\bx^{\balpha_{i}}
\end{equation}
and hence as a quotient of the polynomial ring \(k[Z_{1},\dots,Z_{s}]\)
by the kernel.

Now let \(V(\bh)\subset\biProj k[\by,\bx]\) be a bigraded scheme with
bihomogeneous coordinate ring \(S=k[\by,\bx]/\langle\bh\rangle\). Each
\(S_{t\bw}\) is the image of \(k[\by,\bx]_{t\bw}\), so \(S^{(\bw)}\) is the
image of \(T\), and map \eqref{eq:Segre} gives an explicit description of
\(S^{(\bw)}\) as well as of the map \(S^{(\bw)}\to S\). Consider a closed
subvariety \(Z=V(\fp)\subset V(\bh)\), where \(\fp\) is a bihomogeneous
prime ideal of \(S\) with \(\fp\nsupseteq S_{\dagger}\). Then the
monograded representation of \(Z\) is given by
\(\fp|_{S^{(\bw)}}:=\fp\cap S^{(\bw)}\) by Proposition
\ref{prop:diagonal-proj}. We can compute \(\fp|_{S^{(\bw)}}\) from \(\fp\).

In particular, for monograded varieties \(Y=\Proj S\) with
\(S=k[\by]/\langle\bg\rangle\) and \(X=\Proj R\) with
\(R=k[\bx]/\langle\bf\rangle\), the case \(\ba=\bzero\), \(\bw=(1,1)\) gives
the product
\[
Y\times X=\biProj\bigl(S\otimes R\bigr)
=\biProj\bigl(k[\by,\bx]/\langle\bf,\bg\rangle\bigr)
\subset\PP(\bd)\times\PP(\bc)
\]
together with its monograded representation \(\Proj S\sharp R\) by the
Segre product \(S\sharp R=(S\otimes R)^{((1,1))}\). This is the case used
in Section \ref{subsec:bi-to-mono-projections}.
\subsection{Bi-to-mono projections and graph morphisms\label{subsec:bi-to-mono-projections}}

Let \(W\) be a bigraded variety and let \(T\) be its bihomogeneous
coordinate ring. The elements of bidegree \((s,0)\) in \(k[\by,\bx]\) are
exactly those of \(k[\by]\), so
\[
R:=\bigoplus_{s\ge0}T_{s,0}
\]
is the image of \(k[\by]\) in \(T\), a monograded ring. The projection
\(\biProj k[\by,\bx]\to\PP(\bd)\) of Section
\ref{subsec:monograded-bigraded} restricts on \(W\) to a morphism
\[
W\to V:=\Proj R,
\]
whose image is the closed subvariety \(V\) of \(\PP(\bd)\): that
projection is proper, so the image is closed, and \(\Proj R\) is the
scheme-theoretic image of \(W\).
If \(\fp\subset k[\by,\bx]\) is the bihomogeneous prime ideal defining
\(W\), then \(R=k[\by]/\fp|_{k[\by]}\), and we can compute a generating set
of \(\fp|_{k[\by]}\) from one of \(\fp\).
\begin{defn}
\label{defn:bi-to-mono}
We call the morphism \(W\to V\) as above from the bigraded variety
\(W\) to the monograded variety \(V\) a \emph{bi-to-mono projection},
assuming that it is endowed with the data of a finite generating set of
\(\fp|_{k[\by]}\).
\end{defn}
For \(\ba=\bzero\) the ambient \(\biProj k[\by,\bx]\) is the product
\(\PP(\bd)\times\PP(\bc)\), the projection above is \(\pr_{1}\), and by
symmetry \(\pr_{2}\) gives a bi-to-mono projection as well, after
exchanging the two blocks of variables. For \(\ba\ne\bzero\) there is
no morphism to \(\PP(\bc)\), and only \(\pr_{1}\) is available.

Giving a morphism \(f\colon Y\to X\) of monograded varieties is
equivalent to giving its graph \(\Gamma_{f}\), which is, by definition, the closed subvariety \(\{(y,f(y))\mid y\in Y\}\subset Y\times X\).
\begin{defn}
For monograded varieties \(Y=\Proj S\) and \(X=\Proj R\), a \emph{(bigraded)
graph morphism} \(f\colon Y\to X\) is a morphism given with an explicit
representation of its graph \(\Gamma_{f}\subset Y\times X=\biProj(S\otimes R)\)
as a bigraded variety.
\end{defn}

A graph morphism \(f\colon Y\to X\) induces two bi-to-mono projections
\[
Y\xleftarrow{\sim}\Gamma_{f}\to X,
\]
with the first one being an isomorphism. Conversely, if \(Y\) and \(X\)
are monograded varieties and if \(G\subset Y\times X\) is a bigraded
variety such that the projection \(G\to Y\) is an isomorphism, then
\(G\) defines a graph morphism \(f\colon Y\to X\) such that \(\Gamma_{f}=G\).
The graph morphism is our primary choice of data type to represent
morphisms of varieties.

At some steps of our algorithm for the minimal model program, we need
to transform a bi-to-mono projection into a graph morphism,
transforming the source bigraded variety into a monograded variety. We
need this operation in order to avoid dealing with ``trigraded
varieties,'' ``quadgraded varieties,'' or such.
\begin{lem}
\label{lem:bi-to-mono-to-graph}
Let \(f\colon W\to V\) be a bi-to-mono projection and let \(W^{\rm}\)
be the monograded variety that corresponds to \(W\) by Proposition
\ref{prop:diagonal-proj}. Then we can compute the graph
\(\Gamma_{f}\subset W^{\rm}\times V\) of \(f\) as a bigraded variety.
\end{lem}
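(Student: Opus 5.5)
Write \(W=V(\fp)\subset\biProj k[\by,\bx]\) with \(\fp\subset P:=k[\by,\bx]\) a bihomogeneous prime. Write \(V=\Proj R\) with \(R=k[\by]/\fp|_{k[\by]}\). Let \(W^{\rm}=\Proj\bigl(S^{(\bw)}\bigr)\), where \(S=P/\fp\). By Lemma \ref{lem:diagonal-generators} and \eqref{eq:Segre}, we have an explicit presentation \(S^{(\bw)}=k[Z_1,\dots,Z_s]/J\), with \(Z_l\mapsto \by^{\bbeta_l}\bx^{\balpha_l}\) and \(J\) the kernel of \(k[\bZ]\to S\). This kernel is computable by elimination, and \(Z_l\) has degree \(t_l\), where \(\deg \by^{\bbeta_l}\bx^{\balpha_l}=t_l\bw\).

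The plan is to construct the graph inside the ambient ring \(k[\by,\bZ]\) with bigrading \(\deg y_j=(d_j,0)\) and \(\deg Z_l=(0,t_l)\). Here \(\by\) is a fresh copy of the coordinates of \(\PP(\bd)\), which I write \(\by'\) to distinguish it. I would realize \(\Gamma_f\) as the closure of the image of \(W\) under the pair of maps \(W\to V\) and \(W\xrightarrow{\sim}W^{\rm}\). At the level of rings, consider the homomorphism
\[
\Phi\colon k[\by',\bZ]\to S[u,v],\qquad y'_j\mapsto y_j u^{d_j},\quad Z_l\mapsto \by^{\bbeta_l}\bx^{\balpha_l}v^{t_l},
\]
where \(u,v\) are auxiliary variables. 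Let \(I:=\ker\Phi\). Its preimage structure makes \(I\) bihomogeneous: the torus acting on \(u,v\) separates the two degrees, and the relative bigrading of \(S\) is absorbed because the image ring is a multigraded domain. Since \(S[u,v]\) is a domain, \(I\) is prime. It is computable by Gröbner elimination, namely by eliminating \(\by,\bx,u,v\) from the graph ideal \(\langle y'_j-y_ju^{d_j},\,Z_l-\by^{\bbeta_l}\bx^{\balpha_l}v^{t_l}\rangle+\fp\) in \(k[\by',\bZ,\by,\bx,u,v]\). Then \(I\) contains the ideal of \(V\) in the \(\by'\)-variables and \(J\) in the \(\bZ\)-variables, so \(V(I)\subset V\times W^{\rm}\subset\PP(\bd)\times\Proj k[\bZ]\). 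I would then swap the blocks so that \(W^{\rm}\) is the first factor, matching the convention for \(\Gamma_f\subset W^{\rm}\times V\).

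The substantive step is to check that \(V(I)=\Gamma_f\) and that \(I\not\supseteq\) the irrelevant ideal. For this I would argue chart by chart, using the covering of \(W\cong W^{\rm}\) by \(D_+(F_{j,i})\) from Proposition \ref{prop:diagonal-proj}. On \(D_+(F_{j,i})\), the function \(y'_{j'}/y'_j\)-type ratios (more precisely, degree-zero ratios \(y'^{\gamma}/y'^{\gamma'}\)) pull back to \(\by^{\gamma}/\by^{\gamma'}\in (S_{y_jx_i})_{\bzero}\). These degree-zero elements of the chart of \(W\) are exactly the coordinates of \(f\). Similarly, the degree-zero ratios of the \(Z_l\) give the identification \(W^{\rm}\cong W\). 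Hence \(I\) localized at \(y'_j Z\)-monomials corresponding to \(F_{j,i}\) cuts out precisely the graph of the composite \(W^{\rm}\xrightarrow{\sim}W\to V\) on that chart. Since \(\Gamma_f\) is integral and \(V(I)\) is the closure of the image of the graph, being defined by a prime ideal of the same dimension that agrees on a dense open, they coincide. Finally, \(I\not\supseteq\) irrelevant because \(\Phi(y'_jF'_{j,i})\neq0\) in the domain \(S[u,v]\) whenever \(y_jx_i\notin\fp\). Here \(F'_{j,i}\) is a \(\bZ\)-monomial mapping to a power of \(F_{j,i}\) times a power of \(v\).

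The main obstacle I expect is the bookkeeping in this chart comparison. One must make sure that the degree-zero localizations \(\bigl(k[\by',\bZ]/I\bigr)_{(y'_jZ^{\gamma})}\) really equal the coordinate rings of the graph. This means checking that the auxiliary weights \(u^{d_j},v^{t_l}\) enforce exactly the bigrading, and not something coarser when the \(d_j\) or \(t_l\) have common factors. I would handle this by noting that \(\Phi\) is bigraded for \(\deg u=(1,0)\) and \(\deg v=(0,1)\), which makes \(I\) bihomogeneous. The degree-zero parts of localizations are then computed inside the degree-\((0,0)\) part of \(S[u^{\pm},v^{\pm}]\), which equals the degree-zero part relevant on each chart.
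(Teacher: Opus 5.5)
Your proposal is correct and is essentially the paper's proof. Your $I=\ker\Phi$ is the paper's ideal $\xi^{-1}(\fp)$, for the multiplication map $\xi\colon P^{(\bw)}\otimes k[\by]\to P$, pulled back to your polynomial presentation: the auxiliary variables $u,v$ only enforce a bihomogeneity that the paper gets from the injectivity of $(t,n)\mapsto t\bw+(n,0)$, so $\ker\Phi$ equals the kernel of the same map without $u,v$. Your chart-by-chart comparison on the opens $D_{+}(F_{j,i})$, together with primality and the irrelevant-ideal check, fills in the identification with $\Gamma_f$ that the paper states in a single sentence.
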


\begin{proof}
Put \(P:=k[\by,\bx]\) and let \(\fp\subset P\) be the bihomogeneous prime
ideal that defines \(W\). Write
\[
\iota\colon\biProj P\xrightarrow{\ \sim\ }\Proj P^{(\bw)}
\]
for the inverse of the isomorphism \eqref{eq:diagonal} of Proposition
\ref{prop:diagonal-proj}, and \(p\colon\biProj P\to\PP(\bd)=\Proj k[\by]\)
for the projection. The bigraded product
\(\Proj P^{(\bw)}\times\PP(\bd)\) is written as
\(\biProj\bigl(P^{(\bw)}\otimes k[\by]\bigr)\), and it contains \(W^{\rm}\times V\).
Consider the morphism
\[
(\iota,p)\colon\biProj P\to\Proj P^{(\bw)}\times\PP(\bd),\quad z\mapsto(\iota(z),p(z)),
\]
corresponding to the ring homomorphism
\[
\xi\colon P^{(\bw)}\otimes k[\by]\to P,\quad u\otimes r\mapsto ur,
\]
both \(P^{(\bw)}\) and \(k[\by]\) being subrings of \(P\). The map \(\xi\) is
homogeneous with respect to the map
\[
q\colon\ZZ^{2}\to\ZZ^{2},\quad(t,n)\mapsto t\bw+(n,0),
\]
which is injective because \(w_{2}>0\). The desired graph
\(\Gamma_{f}\subset W^{\rm}\times V\) is the image of \(W\) by
\((\iota,p)\). We claim that \(\xi^{-1}(\fp)\) defines \(\Gamma_{f}\) as a
subvariety of the bigraded scheme \(\Proj P^{(\bw)}\times\PP(\bd)\). To
see this, write an element \(h\in P^{(\bw)}\otimes k[\by]\) as
\(h=\sum_{\bv\in\ZZ^{2}}h_{\bv}\), where \(h_{\bv}\) is the degree-\(\bv\)
part of \(h\). We have \(\xi(h)=\sum_{\bv\in\ZZ^{2}}\xi(h_{\bv})\). Since
\(q\) is injective, \(\xi(h_{\bv})\) is the degree \(q(\bv)\)-part of
\(\xi(h)\). Since \(\fp\) is bihomogeneous, we have that \(\xi(h)\in\fp\)
if and only if \(\xi(h_{\bv})\in\fp\) for every \(\bv\). This implies
that \(\xi^{-1}(\fp)\) is bihomogeneous. The ideal \(\xi^{-1}(\fp)\) is
generated by bihomogeneous elements that vanish along
\((\iota,p)(W)\), which shows the claim. With \(\fp^{(\bw)}\) as in
Proposition \ref{prop:diagonal-proj}, we have
\(W^{\rm}\times V=\biProj\bigl((P^{(\bw)}/\fp^{(\bw)})\otimes(k[\by]/\fp|_{k[\by]})\bigr)\),
and \(\Gamma_{f}\) is defined there by the image of \(\xi^{-1}(\fp)\). We
can compute this ideal as is well known.
\end{proof}

Lemma \ref{lem:bi-to-mono-to-graph} shows how to transform a
bi-to-mono projection to a graph morphism of monograded varieties
without changing the target monograded variety.

\subsection{Homogeneous morphisms\label{subsec:Homogeneous-morphisms}}

Let \(Y=\Proj S\) and \(X=\Proj R\) be monograded varieties. We call
a morphism \(\Proj S\to\Proj R\) induced by a homogeneous (that is,
degree-preserving) homomorphism \(\phi\colon R\to S\) a \emph{homogeneous
morphism}. Note that \(\phi\) must satisfy \(\sqrt{\langle\phi(R_{\dagger})\rangle}=S_{\dagger}\)
so that the domain of the morphism is the entire \(\Proj S\). Note
that not every morphism of monograded varieties is expressed as a
homogeneous morphism. However, homogeneous morphisms are useful in
some constructions.

Given a homogeneous morphism \(f\colon Y\to X\), we can effectively
compute its graph \(\Gamma_{f}\) in the bigraded scheme \(\biProj\bigl(S\otimes R\bigr)\).
Indeed, its bihomogeneous defining ideal is the kernel of
\[
S\otimes R\to S,\quad s\otimes r\mapsto s\cdot\phi(r),
\]
which can be computed. Thus, we can transform a homogeneous morphism
into a graph morphism.

\section{Nef thresholds and nefness of canonical divisors
\label{sec:nefness-canonical}}

Let \(X=\Proj R\) be a normal monograded variety with \(R=k[\bx]/\fp\). In this section, we compute the nef threshold
\[
\lambda:=\inf\{t\in\RR_{\ge0}\mid K_X+tH\text{ is nef}\}
\]
for a given ample Cartier divisor \(H\) 
and decide whether the canonical divisor is nef.
From \cite[(2.2.9)]{goto1978ongraded} and
\cite[14.5.2]{brodmann2012localcohomology}, the canonical module \(\omega_{R}\) of
\(R\) is isomorphic to the Ext group
\(\Ext^{t}_{k[\bx]}(R,\omega_{k[\bx]})\) with
\(t=\dim k[\bx]-\dim R\). The coherent sheaf \(\widetilde{\omega_{R}}\)
associated to the graded \(R\)-module \(\omega_{R}\) is the canonical
sheaf \(\omega_{X}\) of \(X\) \cite[5.1.8]{goto1978ongraded}. From
the obtained representation of \(\omega_{R}\), we can find a canonical
divisor \(K_{X}\) (that is, a Weil divisor such that \(\cO_{X}(K_{X})\cong\omega_{X}\))
in a way explained at \cite[Section 3]{schwede2018divisor}. More
precisely, we can specify a Weil divisor \(K_{X}=\sum^{l}_{i=1}n_{i}D_{i}\)
by giving integers \(n_{i}\) and generators of homogeneous prime ideals
\(\fp_{i}\subset R\) defining \(D_{i}\).

\begin{prop}
\label{prop:nef-scaled}
Let \(X\) be a normal projective variety
of dimension \(d\) with only log terminal singularities, let \(H\) be an ample
Cartier divisor on \(X\) and let \(t\) be a positive rational number. Let \(N\)
be a positive integer such that \(N(K_{X}+tH)\) is Cartier and put
\[
m:=\begin{cases}
7 & (d=3\text{ and }N=1),\\
6 & (d=3\text{ and }N\ge2),\\
2^{d+1}(d+1)!(d+1) & (d\ne3).
\end{cases}
\]
Then \(K_{X}+tH\) is nef if and only if the linear system \(|mN(K_{X}+tH)|\)
is base-point-free.
\end{prop}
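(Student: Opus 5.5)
The plan is to treat the two implications separately. The direction "base-point-free \(\Rightarrow\) nef" is immediate. If \(|mN(K_{X}+tH)|\) is base-point-free, then \(\cO_{X}(mN(K_{X}+tH))\) is globally generated, so it has nonnegative degree on every curve. Hence \(mN(K_{X}+tH)\) is nef, and therefore so is \(K_{X}+tH\). As explained in the introduction, base point freeness and nefness may be checked after base change to \(\overline{k}\) and then to \(\CC\), so I will work over \(\CC\) throughout.

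For "nef \(\Rightarrow\) base-point-free", I will reduce to an effective base point free theorem. Put \(L:=N(K_{X}+tH)\), which is Cartier by assumption and nef by hypothesis. The key computation is
\[
L-K_{X}=(N-1)(K_{X}+tH)+tH .
\]
This is the sum of a nef \(\QQ\)-Cartier divisor and an ample \(\QQ\)-Cartier divisor (here \(t>0\)), so \(L-K_{X}\) is ample. Since \(X\) is log terminal, the pair \((X,0)\) is klt. We are therefore in the setting of Kollár's effective base point free theorem with the parameter \(a=1\): \(L\) is a nef Cartier divisor and \(aL-K_{X}\) is nef and big. For \(d\ne3\), Kollár's theorem says that \(|2^{d+1}(d+1)!(d+a)L|\) is base-point-free. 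With \(a=1\) this gives exactly the constant \(m=2^{d+1}(d+1)!(d+1)\) of the statement, so the case \(d\ne3\) is settled by a direct citation.

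For \(d=3\), I would instead invoke the sharper effective base point free theorems for threefolds, for log canonical threefolds in the style of Fujino, which give a linear bound in \(a\). When \(N=1\), \(L-K_{X}=tH\) is merely ample, and the threefold bound with \(a=1\) should yield \(|7L|\). When \(N\ge2\) there is extra positivity, because \(L-K_{X}\) contains the nef summand \((N-1)(K_{X}+tH)=\frac{N-1}{N}L\). I would use this to apply the theorem with a smaller effective parameter, for instance through the \(\QQ\)-divisor version with \(aL-K_{X}\) ample for some \(a<1\), or by writing \(L-K_{X}=\frac{N-1}{N}L+tH\). This should improve the constant to \(6\).

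The main obstacle is matching these threefold constants exactly. I need to locate the precise formulation for threefolds (klt or lc, Cartier \(L\), \(aL-K\) nef and big, possibly with non-integral \(a\)) whose bound gives \(7\) for \(a=1\) and \(6\) once the extra \(\frac{N-1}{N}L\) is used. I would first check whether the published threefold statement allows rational \(a\). If it does not, I would absorb the extra positivity by perturbing \(H\) while keeping \(L-K_{X}\) ample. Everything else in the argument is formal.
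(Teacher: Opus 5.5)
\textbf{Where the proposal stands.} The easy direction is fine. The case \(d\ne3\) also goes through. You take \(a=1\), using that \(L-K_X=(N-1)(K_X+tH)+tH\) is ample. The paper takes \(a=1/N\), using that \((1/N)L-K_X=tH\), and notes \(\lceil a\rceil=1\). Both give the constant \(2^{d+1}(d+1)!(d+1)\). One correction on the citation: the constant \(2^{d+1}(d+1)!(d+a)\) comes from Fujino's log canonical version of the effective base point free theorem, which is what the paper cites. Kollár's original klt statement has the different constant \(2(d+2)!(d+a)\), so citing Kollár would not give the stated \(m\).

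\textbf{The gap is the case \(d=3\).} You do not identify the theorem you need, and ``should yield \(|7L|\)'' and ``should improve the constant to \(6\)'' are not arguments. The input the paper uses is Matsushita's effective base point freeness for threefolds (Theorem~4 there, with zero boundary). It says: if \(L\) is a nef Cartier divisor and \(aL-K_X\) is nef and big for a rational \(a\), then \(|nL|\) is base-point-free for every integer \(n>a+11/2\). With \(a=1\) this gives \(n=7\), which settles \(N=1\).

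\textbf{What your route misses for \(N\ge2\).} The extra positivity you point to, namely \(a=1/N\), is not enough when \(N=2\). There \(a=1/2\) and the condition \(n>6\) still forces \(n=7\). Your suggestion of ``some \(a<1\)'' is weaker still; you need \(a<1/2\) strictly. The paper gets this from openness of the ample cone:
\begin{enumerate}
\item Put \(D=K_X+tH\), which is nef.
\item Choose a rational \(0<\delta<1\) such that \(tH-\delta D\) is still ample.
\item Set \(a=(1-\delta)/N\). Then \(aL-K_X=tH-\delta D\) is ample, and \(a<1/N\le1/2\).
\item Hence \(n=6>a+11/2\), so \(|6L|\) is base-point-free.
\end{enumerate}
So the perturbation is needed to make the inequality strict at \(N=2\), not (as you suggest) to cope with a possible integrality requirement on \(a\).
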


\begin{proof}
If \(|mN(K_{X}+tH)|\) is base-point-free, then \(mN(K_{X}+tH)\) is nef, and
hence so is \(K_{X}+tH\).
Conversely, suppose that \(D:=K_{X}+tH\) is nef and put \(L:=ND\), which is a
nef Cartier divisor. For a positive rational number \(a\) we have
\[
aL-K_{X}=aND-K_{X}=(aN-1)D+tH;
\]
in particular \((1/N)L-K_{X}=tH\) is ample.

Suppose first that \(d=3\). The ample cone being open, we may choose a
rational number \(\delta\) with \(0<\delta<1\) such that \(tH-\delta D\) is
ample. Putting \(a:=(1-\delta)/N\), the divisor
\[
aL-K_{X}=tH-\delta D
\]
is then ample and in particular nef and big. By
\cite[Theorem 4]{matsushita1996effective}, applied with the zero boundary,
the system \(|nL|\) is base-point-free for every integer \(n>a+11/2\). If
\(N=1\), then \(a<1\) and we may take \(n=7\). If \(N\ge2\), then
\(a<1/N\le1/2\) and we may take \(n=6\).

Suppose next that \(d\ne3\). Putting \(a:=1/N\), the divisor \(aL-K_{X}=tH\) is
ample and \(\lceil a\rceil=1\) because \(0<a\le1\). By the effective base
point free theorem
\cite[Theorem 1.1 and Remark 1.2]{fujino2009effective}, the system
\(|mL|\) is base-point-free.
\end{proof}

\begin{rem}
\label{rem:fujino-bound}
For \(d=3\) the bound of the third case reads \(m=1536\), and applying
Fujino's theorem to \(a:=2/N\), as one also may, gives \(1920\) when \(N=1\).
It is to avoid constants of this size that dimension three is treated
separately.
\end{rem}

\begin{lem}
\label{lem:lambda}
Let \(X\) be a normal projective variety of
dimension \(d\) with only log terminal singularities such that \(K_{X}\) is
not nef, and let \(H\) be an ample Cartier divisor on \(X\). Put
\[
\lambda:=\inf\{t\in\RR_{\ge0}\mid K_{X}+tH\text{ is nef}\}.
\]
Then the following hold.
\begin{enumerate}
\item \(\{t\in\RR_{\ge0}\mid K_{X}+tH\text{ is nef}\}=[\lambda,\infty)\) and
\(0<\lambda<\infty\).
\item \(K_{X}+\lambda H\) is nef but not ample.
\item \(\lambda\) is a rational number. Moreover, if \(a\in\ZZ_{>0}\) is such
that \(aK_{X}\) is Cartier and we write \(\lambda\) in lowest terms, then its
numerator is at most \(a(d+1)\).
\end{enumerate}
\end{lem}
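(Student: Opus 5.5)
Items (1) and (2) follow from convexity and openness of the ample cone. Item (3) is Kawamata's rationality theorem, which we will invoke in the form stated for log terminal pairs (e.g.\ \cite[Theorem 3.5 and Complement 3.6]{kollar1998birational}).

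For (1), we use that the nef cone is closed and convex. For \(t\ge t'\), the divisor \(K_X+tH\) is \((K_X+t'H)+(t-t')H\), a nef divisor plus a nonnegative multiple of an ample one. Hence the set of \(t\) with \(K_X+tH\) nef is an up-closed interval. It is closed because nefness is a closed condition: intersection numbers with curves depend continuously on \(t\). It is nonempty since, \(H\) being ample, \(K_X+tH\) is ample for \(t\gg0\); for instance, \(aK_X\) is Cartier and \(H\) ample, so \(aK_X+sH\) is ample for large \(s\). Finally, \(\lambda>0\) because \(t=0\) is excluded by the hypothesis that \(K_X\) is not nef. Thus the set equals \([\lambda,\infty)\) with \(0<\lambda<\infty\).

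For (2), nefness of \(K_X+\lambda H\) is the closedness just shown. If \(K_X+\lambda H\) were ample, then by openness of the ample cone \(K_X+(\lambda-\epsilon)H\) would still be ample, hence nef, for small \(\epsilon>0\). This contradicts minimality of \(\lambda\), since \(\lambda-\epsilon\ge0\) for small \(\epsilon\) because \(\lambda>0\).

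For (3), we apply the rationality theorem to the klt variety \(X\), for which \(aK_X\) is Cartier, \(K_X\) is not nef, and \(H\) is an ample Cartier divisor. The theorem states that
\[
r:=\max\{t\in\RR\mid H+tK_X\text{ is nef}\}
\]
is rational and can be written as \(u/v\) with \(u,v\) positive integers satisfying \(0<v\le a(d+1)\). By (1), \(H+tK_X=t(K_X+t^{-1}H)\) is nef for \(t>0\) exactly when \(t^{-1}\ge\lambda\), so \(r=1/\lambda\) and \(\lambda=v/u\). Reducing to lowest terms only shrinks the numerator, so the numerator is at most \(v\le a(d+1)\).

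The main point needing care is matching conventions: some references state the bound as \(a(d+1)\) only under the klt/log terminal hypothesis, and some apply to \(r\) as the reciprocal threshold. We must also apply the theorem over \(\CC\) via the base-change remark in the introduction; this is harmless because \(\lambda\) is unchanged by base field extension.
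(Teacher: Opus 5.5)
Your proof is correct and follows the paper's route. Parts (1) and (2) come from the nef cone being closed and convex and the ample cone being open. Part (3) comes from the rationality theorem applied to the threshold $1/\lambda$.

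The only difference is cosmetic. The paper cites the Kawamata--Matsuda--Matsuki form of the theorem, which bounds the numerator of $a\lambda$ for the minimal Cartier index $a$, so it first reduces to that case. Your Koll\'ar--Mori form allows arbitrary $a$ and bounds the denominator of $1/\lambda$ directly.
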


\begin{proof}
Assertions (1) and (2) follow from the following facts:
\begin{itemize}
\item the nef cone is a closed convex cone and the ample cone is its
interior,
\item the class \([K_{X}]\) is outside of the nef cone and \([H]\) is in the
ample cone.
\end{itemize}
The half-line \(\{[K_{X}]+t[H]\mid t\in\RR_{\ge0}\}\) intersects the nef cone in the closed interval \([\lambda,\infty)\) with \(\lambda>0\), and the open interval \((\lambda,\infty)\) is contained in the ample cone.

For (3), we may assume that \(a\) is the least positive integer with \(aK_{X}\)
Cartier, since the asserted bound only increases with \(a\). Write \(a\lambda\)
and \(\lambda\) in lowest terms as \(v/u\) and \(v'/u'\), respectively. We have
\(v\ge v'\). By the rationality theorem
\cite[Theorem 4-1-1]{kawamata1987introduction}, we have \(v\le a(d+1)\) and
hence \(v'\le a(d+1)\).
\end{proof}

\begin{lem}[{cf.~\cite[Propositions 10.1 and 10.3]{yasuda2023theisomorphism}}]
\label{lem:basept-free}Suppose that we are given a monograded variety
\(X=\Proj R\) and an invertible sheaf \(\cL\) on \(X\) together with
a datum of a graded \(R\)-module inducing it.
\begin{enumerate}
\item We can algorithmically determine whether \(\cL\) is
base-point-free (equivalently, globally generated).
\item If \(\cL\) is base-point-free, then we can algorithmically compute the associated morphism
\(\Phi_{|\cL|}\colon X\to\PP^{n-1}\) with \(n=h^{0}(X,\cL)\) as a graph
morphism.
\end{enumerate}
\end{lem}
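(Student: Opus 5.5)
The plan is to compute the space of global sections $H^{0}(X,\cL)$ together with the evaluation map, and then test surjectivity of $H^{0}(X,\cL)\otimes\cO_{X}\to\cL$ by a saturation computation.

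Let $M$ be the given finitely generated graded $R$-module with $\widetilde{M}\cong\cL$. First I would compute $H^{0}(X,\cL)$ as the degree-zero part of the module of global sections
\[
\Gamma_{*}(\cL)=\bigoplus_{e\in\ZZ}H^{0}(X,\cL(e)).
\]
For $R$ not standard graded, $\cL(e)$ should be read as $\widetilde{M(e)}$. Following Smith's algorithm \cite{smith2000computing}, I would obtain this as
\[
\varinjlim_{r}\Hom_{R}\bigl(R_{\dagger}^{r},M\bigr).
\]
This is the ideal transform. It can be computed by Gröbner bases as $\Hom_{R}(R_{\dagger}^{r},M)$ for $r$ large enough. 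The stabilization is detectable because it is equivalent to the vanishing of the local cohomology $H^{1}_{R_{\dagger}}$ in the relevant degree, which is again computable via Ext modules. Taking its degree-zero part gives a finite $k$-basis $s_{1},\dots,s_{n}$ of $H^{0}(X,\cL)$. Each $s_{i}$ is represented as a homogeneous element of $M$ after multiplying by a suitable power of the irrelevant ideal; more precisely, it is a homomorphism $R_{\dagger}^{r}\to M$.

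Second, for base-point-freeness (part (1)), I would form the submodule $M'\subset\Gamma_{*}(\cL)$ generated over $R$ by $s_{1},\dots,s_{n}$. Here I assume $\Gamma_{*}(\cL)$ is used in place of $M$, which is harmless since both give the same sheaf. Then $\cL$ is globally generated if and only if the quotient $\Gamma_{*}(\cL)/M'$ defines the zero sheaf, which amounts to:
\begin{itemize}
\item its annihilator contains a power of $R_{\dagger}$;
\item equivalently, $\sqrt{\operatorname{Ann}}\supseteq R_{\dagger}$.
\end{itemize}
This is decidable by radical membership. Over the non-standard grading, "zero sheaf" must still mean being supported on $V(R_{\dagger})$, which is exactly this criterion. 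Field extension is harmless, as remarked in the introduction.

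Third, for part (2), I would describe the morphism $\Phi_{|\cL|}$ by its graph. On $X\times\PP^{n-1}=\biProj(R\otimes k[z_{1},\dots,z_{n}])$, the graph is cut out by the $2\times2$ minors $z_{i}s_{j}-z_{j}s_{i}$. Made precise, these become the bihomogeneous relations
\[
\sum_{i}g_{i}z_{i}=0,\quad\text{where }\sum_{i}g_{i}s_{i}=0\text{ in }M.
\]
Here the $g_{i}$ run over homogeneous generators of the syzygy module of $(s_{1},\dots,s_{n})$, taken in suitable twists. This is the symmetric-algebra description of $\PP(\cL)$ restricted to the surjection $\cO^{n}\to\cL$. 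The resulting ideal should then be saturated with respect to $(R\otimes k[\bz])_{\dagger}$, and its prime component dominating $X$ should be taken. Because $\cL$ is invertible and globally generated, $\mathbf{P}(\cL)\subset X\times\PP^{n-1}$ is exactly the graph, so the saturated ideal is prime. It then only remains to check that the projection to $X$ is an isomorphism, which holds by construction.

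The main obstacle I expect is the first step for non-standard graded $R$. There, twists $\cL(e)$ are not invertible in general, and the sections must be represented by homomorphisms from powers of $R_{\dagger}$. Bounding $r$ effectively, or detecting stabilization, is the delicate point. I would handle it via the local cohomology exact sequence
\[
0\to H^{0}_{R_{\dagger}}(M)\to M\to\Gamma_{*}(\cL)\to H^{1}_{R_{\dagger}}(M)\to0,
\]
computing $H^{1}_{R_{\dagger}}(M)$ as a colimit of Ext modules that is known to stabilize once the degree-zero parts agree for two consecutive values of $r$.
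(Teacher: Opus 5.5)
Your overall route (degree-zero part of the ideal transform, then a support test for the cokernel of the evaluation map, then the graph from the linear syzygies of the sections) is the natural one. The paper itself only defers to \cite{yasuda2023theisomorphism}, observing that nothing there uses a standard grading. The gap is in the step you single out as delicate: the stopping rule in your last paragraph, that the system stabilizes once the degree-zero parts agree for two consecutive $r$, is false.

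Take $R=k[x,y]$ with the standard grading, so $X=\PP^1$, and $M=(x^3,y)\subset R$, which induces $\cO_{\PP^1}$. Since $\Hom_R(R_\dagger^r,R)=R$, we get $\Hom_R(R_\dagger^r,M)=(M:R_\dagger^r)$. Its degree-zero part is $0$ for $r=0,1,2$ (because $x^2\notin M$) and $k$ for $r\ge3$. Likewise $\Ext^1_R(R/R_\dagger^r,M)_0\cong\Hom_R(R/R_\dagger^r,R/M)_0$ equals $0,0,k$ for $r=1,2,3$. Your procedure would stop after $r=1$ or $r=2$, report $H^0(\PP^1,\cO_{\PP^1})=0$, and declare $\cO_{\PP^1}$ not globally generated. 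The chain $\Hom_R(R_\dagger^r,M)_0$ is increasing and eventually exhausts $H^0(X,\cL)$, but a plateau certifies nothing. A class in $H^1_{R_\dagger}(M)_0$ killed by $R_\dagger^3$ but not by $R_\dagger^2$ is invisible in degree zero at the intermediate stages.

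What is missing is an a priori certificate. Any of the following supplies one.

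(a) Use Smith's explicit truncation bound: a level $r$, read off via graded local duality from the degrees in a minimal free resolution of $M$ over $k[\bx]$, beyond which $\Hom_R(R_{\ge r},M)_0=H^0(X,\cL)$. This is the monograded case of the argument of Proposition \ref{prop:Hom} and Corollary \ref{cor:Hom2}.

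(b) Replace $M$ by $M/H^0_{R_\dagger}(M)$, so that all maps $\Hom_R(R_\dagger^r,M)\to\Gamma_*(\cL)$ are injective. Then compute $h^0(X,\cL)=\dim M_0+\dim H^1_{R_\dagger}(M)_0$. Here $H^1_{R_\dagger}(M)_0$ is dual to $\Ext^{m}_{k[\bx]}\bigl(M,k[\bx](-\sum_i c_i)\bigr)_0$ by local duality, where $k[\bx]=k[x_0,\dots,x_m]$ has $\deg x_i=c_i$. Then increase $r$ until $\dim\Hom_R(R_\dagger^r,M)_0$ reaches this number.

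(c) For each $r$, put $N_r:=\Hom_R(R_\dagger^r,M)$ and test whether $H^1_{R_\dagger}(N_r)_0=0$, applying the test to $N_r$ rather than to $M$. Since $N_r$ and $M$ define the same sheaf, this holds exactly when $(N_r)_0\to H^0(X,\cL)$ is onto, and it is decidable by local duality. Your earlier sentence about vanishing of $H^1$ ``in the relevant degree'' is correct only in this form.

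With this repaired, the rest goes through, up to two small corrections. First, the relations in part (2) should be syzygies $\sum_i g_is_i=0$ in $N_r$, not in $M$, since the $s_i$ need not lie in $M$. Second, the annihilator test should be run on the finitely generated module $N_r/\sum_i Rs_i$ for such an $r$, rather than on all of $\Gamma_*(\cL)$.
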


\begin{proof}
The first assertion is the same as \cite[Proposition 10.1]{yasuda2023theisomorphism}
except that \(R\) is not necessarily standard graded (that is, generated
in degree one) in our present setting. However, the proof of the cited
result is valid also in our setting. For the second assertion, the
proof of \cite[Proposition 10.3]{yasuda2023theisomorphism} explains
how to compute \(\Phi_{|\cL|}\) as a graph morphism in the standard
graded setting. This also applies to our present setting.
\end{proof}

\begin{prop}
\label{prop:lambda-computable}
Suppose that we are given a normal \(\QQ\)-Gorenstein monograded variety
\(X\) of dimension \(d\) with only log terminal singularities such that
\(K_{X}\) is not nef, an ample Cartier divisor \(H\) on \(X\), and a positive
integer \(a\) such that \(aK_{X}\) is Cartier. Then we can algorithmically
compute the rational number
\[
\lambda=\inf\{t\in\RR_{\ge0}\mid K_{X}+tH\text{ is nef}\}
\]
of Lemma \ref{lem:lambda}. See Algorithm \ref{algo:lambda}.
\end{prop}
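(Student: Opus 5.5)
The plan is to reduce the computation of \(\lambda\) to finitely many nefness tests, each decided by Proposition \ref{prop:nef-scaled} together with Lemma \ref{lem:basept-free}. By Lemma \ref{lem:lambda}(3), \(\lambda=v/u\) in lowest terms with \(1\le v\le a(d+1)\), but \(u\) is a priori unbounded. So I would first find an upper bound on \(u\), or alternatively use a search that terminates without one.

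I would take the second route, since it needs no extra theory. For each candidate rational \(t=v/u\), a positive integer \(N\) with \(N(K_X+tH)\) Cartier is computable: \(N=au\) works, because \(aK_X\) and \(H\) are Cartier. The graded module inducing \(\cO_X(mN(K_X+tH))\) is obtained as follows. From the explicit Weil divisor \(K_X=\sum n_iD_i\) (computed from \(\omega_R\) as described above) and a module for \(H\), form the reflexive module of \(aK_X\) by the divisorial-ideal methods of \cite{schwede2018divisor}. Then take tensor powers and twists, and finally take the double dual/saturation. Lemma \ref{lem:basept-free}(1) then decides base point freeness, and by Proposition \ref{prop:nef-scaled} this decides whether \(K_X+tH\) is nef.

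The search would run as follows. Since \(K_X\) is not nef, \(\lambda>0\). I would first find an integer \(T\) with \(K_X+TH\) nef by testing \(T=1,2,\dots\); this terminates because \(\lambda<\infty\) by Lemma \ref{lem:lambda}(1). Then every \(t\) with \(0<t\le T\) and numerator at most \(a(d+1)\) is a candidate. Enumerate these candidates by increasing denominator \(u=1,2,3,\dots\). Maintain the interval \([\ell,r]\) given by the largest tested non-nef value \(\ell\) and the smallest tested nef value \(r\). By Lemma \ref{lem:lambda}(1), \(\lambda\in(\ell,r]\). Once we reach a denominator \(u\) at which \(\lambda=v/u\) is tested, it is found to be nef. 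Moreover \(\lambda-\epsilon\) is not nef for every \(\epsilon>0\).

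The main obstacle is \emph{certifying} that a nef candidate \(r\) equals \(\lambda\), not merely bounds it from above. I would handle this as follows. Suppose \(r=v/u\) is nef. The candidates \(v'/u'<r\) with \(v'\le a(d+1)\) that are not yet excluded all satisfy \(r-v'/u'\ge 1/(uu')\). However, their number below \(r\) is infinite. The way out is that \(\lambda\) itself has numerator at most \(a(d+1)\). So between \(\ell\) and \(r\) I only need to check whether some fraction with numerator at most \(a(d+1)\) lies in \((\ell,r)\). For each numerator \(v'\le a(d+1)\), the fractions \(v'/u'\) lying in \((\ell,r)\) form a finite set, because \(\ell>0\) forces \(u'<v'/\ell\).

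This gives the algorithm, provided a positive non-nef lower bound \(\ell\) is found first. Test \(t=1/2^s\) for \(s=1,2,\dots\) until one of them is non-nef; this terminates since \(\lambda>0\). After that, the set of candidate fractions with numerator at most \(a(d+1)\) in \((\ell,T]\) is finite and explicitly listable. Test them all; by Lemma \ref{lem:lambda}(1), \(\lambda\) is the smallest candidate found to be nef. This is Algorithm \ref{algo:lambda}.
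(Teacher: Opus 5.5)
Your proposal is correct and follows essentially the paper's argument. You bracket \(\lambda\) in \((\ell,T]\) with a positive non-nef value and a nef value, using \(T=1,2,\dots\) and \(\ell=1/2^{s}\) where the paper uses \(\beta=2^{i}\) and then \(\alpha=\beta/2^{j}\). You then test the finitely many fractions in that interval with numerator at most \(a(d+1)\), each test being base point freeness of \(mN(K_X+tH)\) via Proposition \ref{prop:nef-scaled} and Lemma \ref{lem:basept-free}, and the smallest nef one is \(\lambda\). The intermediate detour through enumeration by increasing denominator is unnecessary but harmless.
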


\begin{proof}
We first note that for a positive rational number \(t=p/q\), written with
positive integers \(p\) and \(q\), we can decide whether \(K_{X}+tH\) is nef.
Indeed, \(N:=aq\) is a positive integer such that
\[
N(K_{X}+tH)=q\cdot aK_{X}+ap H
\]
is Cartier, and hence, with \(m\) as in Proposition \ref{prop:nef-scaled},
the divisor \(K_{X}+tH\) is nef if and only if the Cartier divisor
\(mN(K_{X}+tH)\) is base-point-free, which we can decide by Lemma
\ref{lem:basept-free}.

Testing \(t=1,2,4,\dots\) in this way, we find the first \(\beta=2^{i}\) with
\(K_{X}+\beta H\) nef. Testing then \(t=\beta/2,\beta/4,\dots\), we find the first
\(\alpha=\beta/2^{j}\) with \(K_{X}+\alpha H\) not nef. By Lemma \ref{lem:lambda}(1) we
have \(\alpha<\lambda\le\beta\).

By Lemma \ref{lem:lambda}(3), writing \(\lambda=v/u\) in lowest terms, we
have \(v\le a(d+1)\), and \(\alpha<v/u\le\beta\) gives \(v/\beta\le u<v/\alpha\).
Hence the set \(C\) of rational numbers \(v/u\) in lowest terms with
\(1\le v\le a(d+1)\) and \(\alpha<v/u\le\beta\) is finite, it contains
\(\lambda\), and we can list its elements in increasing order. Testing them
in this order, the first one at which \(K_{X}+tH\) is nef is \(\lambda\).
\end{proof}

\begin{algorithm}[ht]
\raggedright
\caption{Computing the threshold \(\lambda\)}
\label{algo:lambda}

\textbf{Input:} A normal \(\QQ\)-Gorenstein log terminal monograded variety
\(X\) of dimension \(d\) such that \(K_{X}\) is not nef, an ample Cartier
divisor \(H\) on \(X\), and a positive integer \(a\) such that \(aK_{X}\) is
Cartier.

\textbf{Output:} The rational number
\(\lambda=\inf\{t\in\RR_{\ge0}\mid K_{X}+tH\text{ is nef}\}\).

\textbf{Procedure:} Here we decide the nefness of \(K_{X}+tH\) for a
positive rational number \(t=p/q\) by testing whether \(mN(K_{X}+tH)\) is
base-point-free, where \(N=aq\) and \(m\) is as in Proposition
\ref{prop:nef-scaled}.
\begin{enumerate}
\item Let \(\beta\) be the first of \(1,2,4,\dots\) with \(K_{X}+\beta H\) nef.
\item Let \(\alpha\) be the first of \(\beta/2,\beta/4,\dots\) with
\(K_{X}+\alpha H\) not nef.
\item List the finitely many rational numbers \(v/u\) in lowest terms with
\(1\le v\le a(d+1)\) and \(\alpha<v/u\le\beta\) in increasing order.
\item Return the first one of them, say \(t\), with \(K_{X}+tH\) nef.
\end{enumerate}
\end{algorithm}

\begin{lem}
\label{lem:ample-cartier}
Let \(X=V(\bf)\subset\PP(\bc)\) be a monograded variety with homogeneous
coordinate ring \(R=k[\bx]/\langle\bf\rangle\) and put
\(l:=\lcm(c_{0},\dots,c_{m})\). Then the sheaf \(\cO_{X}(l)\) is invertible
and ample, and we can compute an effective Cartier divisor \(H\) on \(X\)
with \(\cO_{X}(H)\cong\cO_{X}(l)\).
\end{lem}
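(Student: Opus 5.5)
Write \(R=k[\bx]/\langle\bf\rangle\) and \(X=\Proj R\). First I will show that \(\cO_{X}(l)\) is invertible, then that it is ample, and finally that it has an explicit effective divisor.

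\emph{Invertibility.} The open sets \(D_{+}(x_{i})\), \(0\le i\le m\), cover \(X\), because the irrelevant ideal \(R_{\dagger}\) is generated up to radical by the \(x_{i}\). Put \(e_{i}:=l/c_{i}\). The element \(x_{i}^{e_{i}}\in R_{l}\) is a unit of the localization \(R_{x_{i}}\). Hence on \(D_{+}(x_{i})\) the module \(R(l)_{(x_{i})}=(R_{x_{i}})_{l}\) equals \(x_{i}^{e_{i}}\cdot(R_{x_{i}})_{0}\), which is free of rank one. So \(\cO_{X}(l)\) is invertible, and on the chart \(D_{+}(x_{i})\) it is trivialized by \(x_{i}^{e_{i}}\).

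\emph{Ampleness.} The same computation shows \(\cO_{X}(l)^{\otimes r}\cong\cO_{X}(rl)\), since multiplication \(R(l)\otimes R(l)\to R(2l)\) is an isomorphism on each chart. The Veronese subring \(R^{(l)}=\bigoplus_{t}R_{tl}\) is finitely generated and satisfies \(\Proj R^{(l)}\cong\Proj R\). Under this identification \(\cO_{X}(l)\) corresponds to \(\cO_{\Proj R^{(l)}}(1)\). Because \(x_{i}^{e_{i}}\in R^{(l)}_{1}\) and these elements generate the irrelevant ideal of \(R^{(l)}\) up to radical, \(\cO(1)\) is generated by degree-one sections on their non-vanishing loci, which cover \(X\). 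Now take \(r\) large enough that \(R^{(rl)}\) is generated in degree one; then \(\cO_{X}(rl)\) is very ample, embedding \(X\) into the projective space of \(R_{rl}\), and so \(\cO_{X}(l)\) is ample. Alternatively, \(\cO_{X}(l)\) is globally generated by the sections \(x_{i}^{e_{i}}\), and the morphism they define is finite, because each \(R_{x_{i}}\) is integral over the corresponding Veronese localization.

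\emph{The divisor.} Choose \(i\) with \(x_{i}\notin\langle\bf\rangle\); such an \(i\) exists because the ideal is proper and does not contain the irrelevant ideal. Let \(H\) be the effective Cartier divisor cut out by the section \(s=x_{i}^{e_{i}}\in R_{l}=H^{0}(X,\cO_{X}(l))\). It is nonzero since \(R\) is a domain. On each chart \(D_{+}(x_{j})\) its local equation is \(x_{i}^{e_{i}}/x_{j}^{e_{j}}\in(R_{x_{j}})_{0}\), and then \(\cO_{X}(H)\cong\cO_{X}(l)\). To make \(H\) computable, represent it by the graded data \((s,\langle\bf\rangle)\), that is, by the homogeneous ideal \(\langle\bf,x_{i}^{e_{i}}\rangle\), together with its local equations on the charts. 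If the Weil-divisor format of \cite[Section 3]{schwede2018divisor} is required, compute the minimal primes of \(\langle\bf,x_{i}\rangle\) and the corresponding multiplicities; these are valuations along height-one primes and can be obtained by primary decomposition, which is available over \(k\). The graded \(R\)-module \(R(l)\) inducing \(\cO_{X}(H)\), which Lemma \ref{lem:basept-free} needs, is given directly.

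\emph{Main obstacle.} The step needing the most care is invertibility: for non-standard gradings \(\cO_{X}(1)\) is generally not invertible, and the argument relies on \(l\) being divisible by every \(c_{i}\), so that each chart contains a unit of degree exactly \(l\). Everything else is standard.
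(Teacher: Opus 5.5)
Your proof is correct and follows essentially the same route as the paper. Both trivialize \(\cO(l)\) on \(D_+(x_i)\) by \(x_i^{l/c_i}\), get ampleness from a sufficiently high Veronese being generated in degree one, and take \(H\) to be the zero divisor of \(x_i^{l/c_i}\) for some \(x_i\notin\langle\bf\rangle\); the paper argues on \(\PP(\bc)\) and restricts while you work on \(X\) directly, which makes no difference. One small slip that does not affect the argument: \(R_l\) only maps to \(H^0(X,\cO_X(l))\), injectively since \(R\) is a domain, and need not equal it.
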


\begin{proof}
Since \(l\) is divisible by every \(c_i\), the sheaf
\(\cO_{\PP(\bc)}(l)\) is invertible: it is trivialized on \(D_+(x_i)\) by
\(x_i^{l/c_i}\). It is ample because a sufficiently high power
\(\cO_{\PP(\bc)}(sl)\) is very ample, by the usual Veronese description of
\(\Proj\) \cite[Tags~0EGH and 0B5J]{stacksprojectauthors2022stacksproject}.
Hence \(\cO_X(l)\) is invertible and ample. Choose \(i\) with
\(x_i\notin\langle\bf\rangle\). Then \(x_i^{l/c_i}\) is a nonzero section of
\(\cO_X(l)\), and its zero divisor is a computable effective Cartier divisor
\(H\) satisfying \(\cO_X(H)\cong\cO_X(l)\).
\end{proof}

\begin{rem}
\label{rem:choice-ample-divisor}
For the ample divisor \(H\) of Lemma \ref{lem:ample-cartier} and for
every Weil divisor \(E\), we have \(\cO_X(E+jH)\cong\cO_X(E)(jl)\).
Thus, twisting by a multiple of \(H\) amounts to just shifting the
grading of the corresponding graded module. So the ample Cartier
divisor \(H\) need not be given as part of the input in Proposition
\ref{prop:lambda-computable}: we may always use the divisor of Lemma
\ref{lem:ample-cartier}. For the same reason, Algorithm
\ref{algo:MMP} chooses such a new ample divisor after each step
rather than carrying the previous one forward, since its birational
transform need not remain ample or correspond to a twist of the
grading on the new model.
\end{rem}

It remains to treat \(t=0\), which the rest of the section has avoided.

\begin{prop}
\label{prop:nef}
Assume that we are given a monograded variety \(X\) of dimension three
with only log terminal singularities. Then we can algorithmically check
whether \(K_{X}\) is nef.
\end{prop}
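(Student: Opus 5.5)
The plan is to combine two semi-decision procedures and run them in parallel. One halts exactly when \(K_X\) is nef; the other halts exactly when \(K_X\) is not nef. Abundance for log terminal threefolds supplies the first, and the scaled nefness test of Proposition \ref{prop:nef-scaled} together with Lemma \ref{lem:lambda} supplies the second.

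\emph{Preliminary steps.} First I compute \(\omega_R\) as the Ext module, as described at the beginning of this section, and a Weil divisor \(K_X\). Next I find a positive integer \(a\) with \(aK_X\) Cartier. I do this by testing \(a=1,2,\dots\) in turn: compute a graded module representing \(\omega_X^{[a]}\) as the reflexive hull (double dual) of \(\omega_R^{\otimes a}\), and decide whether this coherent sheaf is invertible, for instance by checking that its Fitting ideals cut out the empty set in \(\Proj R\). Since log terminal singularities are in particular \(\QQ\)-Gorenstein, this search terminates. Finally I take \(H\) as in Lemma \ref{lem:ample-cartier}, so that twisting by multiples of \(H\) is just a degree shift (Remark \ref{rem:choice-ample-divisor}).

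\emph{The two procedures.} For \(q=1,2,3,\dots\) I perform two tests in turn.
\begin{enumerate}
\item Decide by Lemma \ref{lem:basept-free} whether \(\omega_X^{[qa]}\cong\cO_X(qaK_X)\) is base-point-free. If it is, \(K_X\) is nef and I stop.
\item Decide by Proposition \ref{prop:nef-scaled}, with \(t=1/q\) and \(N=aq\), whether \(K_X+\frac1q H\) is nef. This amounts to testing base-point-freeness of \(mN(K_X+\frac1qH)\), which is a twist of \(\omega_X^{[mqa]}\) by a degree shift. If it is not nef, then \(K_X\) is not nef and I stop.
\end{enumerate}
Both outputs are correct. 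A base-point-free multiple is nef. If \(K_X\) were nef, then \(K_X+\frac1qH\) would be ample, hence nef; so a failure in the second test shows that \(K_X\) is not nef.

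\emph{Termination.} This is the step carrying the real content. If \(K_X\) is not nef, Lemma \ref{lem:lambda}(1) gives \(\lambda>0\), and \(K_X+\frac1qH\) fails to be nef as soon as \(1/q<\lambda\); so the second test eventually succeeds. If \(K_X\) is nef, I invoke the abundance theorem for log terminal threefolds (Miyaoka, Kawamata; Keel--Matsuki--McKernan for the log terminal case). It says that \(K_X\) is semi-ample, so some multiple \(jK_X\) with \(jK_X\) Cartier is base-point-free. Then every multiple of \(jK_X\) is base-point-free as well. Since \(aK_X\) is Cartier, \(\lcm(j,a)\) is a multiple of \(a\), and it is also a multiple of \(j\), so \(\lcm(j,a)K_X\) is base-point-free. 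Hence the first test succeeds at \(q=\lcm(j,a)/a\).

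The main obstacle I expect is this use of abundance. There is no known effective bound on \(j\), which is why the procedure runs an unbounded search rather than a single test as in Proposition \ref{prop:lambda-computable}. Dimension three is exactly where abundance is available, and this is the one place the algorithm depends on it.
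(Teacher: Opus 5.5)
Your proposal is correct and follows essentially the same route as the paper. You find a Cartier index \(a\) via Fitting ideals of reflexive powers of \(\omega_R\), then dovetail a global-generation search on \(\omega_X^{[qa]}\), which halts when \(K_X\) is nef by three-dimensional abundance, with the scaled nefness test of Proposition \ref{prop:nef-scaled}, which halts when \(K_X\) is not nef because \(\lambda>0\); the only differences are cosmetic (the paper tests \(K_X+2^{-j}H\) rather than \(K_X+\frac1qH\), and your \(\lcm(j,a)\) remark just spells out why a semi-ample \(K_X\) makes some \(\omega_X^{[qa]}\) globally generated).
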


\begin{proof}
Let \(H\) be an ample Cartier divisor on \(X\), produced by Lemma
\ref{lem:ample-cartier}. Since \(X\) has only log terminal singularities,
\(K_X\) is \(\QQ\)-Cartier. For \(r=1,2,\dots\), compute
\(\omega_X^{[r]}\) and test whether it is invertible, using Fitting ideals.
This search terminates, and for the first successful \(r\), the divisor
\(rK_X\) is Cartier. We run two searches at once. On one side we test
whether \(\omega^{[ir]}_{X}\) is globally generated, for \(i=1,2,\dots\) in
turn. On the other we test whether \(K_{X}+2^{-j}H\) is nef, for
\(j=1,2,\dots\) in turn; by Proposition \ref{prop:nef-scaled} each of
these is a single test of base point freeness.

One of the two stops.
If \(K_{X}\) is nef then it is semi-ample, the
abundance conjecture being a theorem in dimension three
\cite[Cors.~4.7.8 and~4.7.9]{fujino2017foundations}, so
\(\omega^{[ir]}_{X}\) is globally generated for some \(i\).
If \(K_{X}\) is
not nef then \(\lambda>0\) by Lemma \ref{lem:lambda}, so \(K_{X}+2^{-j}H\)
fails to be nef once \(2^{-j}<\lambda\).

Either outcome answers the question. Global generation of
\(\omega^{[ir]}_{X}\) makes \(K_{X}\) nef. And if \(K_{X}+2^{-j}H\) is not
nef for some \(j\), then \(K_{X}\) is not nef, since the sum of a nef and
an ample divisor is ample.
\end{proof}

\section{Bigraded global Hom modules\label{sec:Bigraded-global-Hom}}

In this section, we prove a bigraded version of Smith's result \cite[Theorem 1]{smith2000computing}
on computing global Hom modules.
Let
\begin{align*}
S & =k[x_{1,0},\dots,x_{1,d_{1}},x_{2,0},\dots,x_{2,d_{2}}]
\end{align*}
be a polynomial ring with \(d_{1}+d_{2}+2\) variables. We give a \(\ZZ^{2}\)-grading
to \(S\) by
\begin{align*}
\deg x_{1,t} & :=(c_{1,t},0)\quad(0\le t\le d_{1},\,c_{1,t}\in\ZZ_{>0}),\\
\deg x_{2,t} & :=(0,c_{2,t})\quad(0\le t\le d_{2},\,c_{2,t}\in\ZZ_{>0}).
\end{align*}
We write \(\bd:=(d_{1},d_{2})\) and \(|\bd|:=d_{1}+d_{2}\), and
\(\bc_{s}:=(c_{s,0},\dots,c_{s,d_{s}})\) for the weights of the \(s\)-th
block, so that \(\biProj S=\PP(\bc_{1})\times\PP(\bc_{2})\).
Let \(I\subset S\) be a bihomogeneous (not necessarily prime) ideal that does not contain the irrelevant ideal \(S_{\dagger}\), and consider the bigraded scheme \(X := \biProj R\) with \(R := S/I\).  Let
\[
\mathcal X:=\left[(\Spec R\setminus V(R_\dagger))/\GG_m^2\right]
\]
be the stacky counterpart of $X$, which has $X$ as its coarse moduli space.
The algorithm presented in this section computes global Hom modules on the Deligne--Mumford stack \(\mathcal X\).

Let \(M\) be a finitely generated bigraded \(R\)-module and let
\[
F_{\bullet}(M)\colon\cdots\to F_{1}(M)\to F_{0}(M)\to0
\]
be the unique minimal free bigraded resolution of \(M\). We write
\[
F_{i}(M)=\bigoplus^{b_{i}(M)}_{j=1}R(-\ba_{i,j}(M))\quad(\ba_{i,j}(M)=(a_{1,i,j}(M),a_{2,i,j}(M))\in\ZZ^{2}).
\]

\begin{defn}
For each
\(s\in\{1,2\}\) and \(i\ge0\), we define
\begin{align*}
\overline{a}_{s,i}(M)
&:=\sup\{a_{s,i,j}(M)\mid1\le j\le b_i(M)\} \in \ZZ\cup\{-\infty\},\\
\underline{a}_{s,i}(M)
&:=\inf\{a_{s,i,j}(M)\mid1\le j\le b_i(M)\} \in \ZZ\cup\{+\infty\}.
\end{align*}
Thus \(\overline{a}_{s,i}(M)=-\infty\) and
\(\underline{a}_{s,i}(M)=+\infty\) when \(b_i(M)=0\).
For \(\bi=(i_{1},i_{2})\in\NN^{2}\), we define
\begin{align*}
\overline{\ba}_{\bi}(M) & :=(\overline{a}_{1,i_{1}}(M),\overline{a}_{2,i_{2}}(M)),\\
\underline{\ba}_{\bi}(M) & :=(\underline{a}_{1,i_{1}}(M),\underline{a}_{2,i_{2}}(M)).
\end{align*}
For \(i\in\NN\), we define \(\overline{\ba}_{i}(M):=\overline{\ba}_{(i,i)}(M)\)
and \(\underline{\ba}_{i}(M)=\underline{\ba}_{(i,i)}(M)\). Let us denote
by \(_{S}M\) the same module \(M\) regarded as a bigraded \(S\)-module
via the quotient map \(S\to R\). Similarly, we define the invariants
\[
\overline{a}_{s,i}({}_{S}M),\,\underline{a}_{s,i}({}_{S}M),\,\overline{\ba}_{\bi}({}_{S}M),\,\underline{\ba}_{\bi}({}_{S}M)
\]
by using the minimal free graded resolution of \(M\) as an \(S\)-module.
\end{defn}

Let \(M\) and \(N\) be two finitely generated bigraded \(R\)-modules. We can associate  coherent
\(\cO_{\mathcal X}\)-modules
 \(\cM\) and \(\cN\) to them, respectively.   Indeed, the bigradings define
\(\GG_m^2\)-equivariant coherent sheaves on \(\Spec R\), whose
restrictions to \(\Spec R\setminus V(R_\dagger)\) descend to \(\mathcal X\).
Let
\[
\cF_{\bullet}(M)\colon\cdots\to\cF_{1}(M)\to\cF_{0}(M)\to0
\]
be the locally free resolution of \(\cM\) that is induced by \(F_{\bullet}(M)\).
For each \(\bv\in\ZZ^{2}\), we denote by \(\cN(\bv)\) the twist of \(\cN\)
by \(\bv\); it is the sheaf associated to the shifted bigraded module
\(N(\bv)\), where \(N(\bv)_{\bu}=N_{\bu+\bv}\).
We have the exact sequences
\begin{equation}\label{eq:Hom-seq}
0\to\Hom_{R}(M,N)\to\Hom_{R}(F_{0}(M),N)\to\Hom_{R}(F_{1}(M),N)
\end{equation}
and
\begin{equation}\label{eq:Hom-seq2}
0\to\Hom_{\mathcal X}(\cM,\cN(\bv))\to\Hom_{\mathcal X}(\cF_{0}(M),\cN(\bv))\to\Hom_{\mathcal X}(\cF_{1}(M),\cN(\bv)).
\end{equation}

For \(\bi=(i_{1},i_{2})\) and \(\bj=(j_{1},j_{2})\in\ZZ^{2}\), we write
\(\bi\ge\bj\) if \(i_{1}\ge j_{1}\) and \(i_{2}\ge j_{2}\), and we write
\(\bmax\) and \(\bmin\) for the componentwise maximum and minimum. For \(\be\in\ZZ^{2}\),
we define the homogeneous submodule \(M_{\ge\be}\subset M\) by
\[
M_{\ge\be}:=\bigoplus_{\bv\ge\be}M_{\bv}.
\]

\begin{prop}[{cf.~\cite[Proposition 2.5]{smith2000computing}}]
\label{prop:Hom}Let \(\bone:=(1,1)\) and
\[
\bc:=(c_{1},c_{2})\quad\left(c_{i}:=\sum^{d_{i}}_{t=0}c_{i,t}\right).
\]
Suppose that \(M\ne0\), and put
\begin{multline*}
\be_{0}:=\bmax\{\overline{\ba}_{\bd+\bone}(_{S}N),\overline{\ba}_{\bd}(_{S}N),\overline{\ba}_{|\bd|+1}(_{S}N),\overline{\ba}_{|\bd|}(_{S}N)\}-\underline{\ba}_{0}(M)-\bc+\bone\\
\in(\ZZ\cup\{-\infty\})^{2}.
\end{multline*}
Then for every \(\be\in\ZZ^2\) such that \(\be\ge\be_{0}\), there is
a natural isomorphism of bigraded \(R\)-modules
\[
\bigoplus_{\bv\ge\be}\Hom_{\mathcal X}(\cM,\cN(\bv))
\cong\Hom_{R}(M,N)_{\ge\be}.
\]
\end{prop}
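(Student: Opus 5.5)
Following Smith's argument for \cite[Proposition 2.5]{smith2000computing}, we compare the exact sequences \eqref{eq:Hom-seq} and \eqref{eq:Hom-seq2}. Summing \eqref{eq:Hom-seq2} over \(\bv\ge\be\) and taking the degree-\(\ge\be\) part of \eqref{eq:Hom-seq} (a truncation of the grading, hence exact) gives two left-exact sequences. The natural map \(\Hom_R(M,N)_{\bv}\to\Hom_{\mathcal X}(\cM,\cN(\bv))\), obtained by sheafifying a degree-\(\bv\) homomorphism, fits into a commutative ladder between them. By the five lemma (in its left-exact form: iso on the middle term, injection on the right), it suffices to prove two things. First, \(\Hom_R(F_0(M),N)_{\bv}\to\Hom_{\mathcal X}(\cF_0(M),\cN(\bv))\) is an isomorphism. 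Second, \(\Hom_R(F_1(M),N)_{\bv}\to\Hom_{\mathcal X}(\cF_1(M),\cN(\bv))\) is injective. Both are needed for all \(\bv\ge\be\). Since the \(F_i(M)\) are free, both reduce to the comparison map
\[
N_{\bu}\to H^0(\mathcal X,\cN(\bu)),
\]
with \(\bu=\bv-\ba_{0,j}(M)\) for the first claim and \(\bu=\bv-\ba_{1,j}(M)\) for the second. We need this map to be bijective for \(\bu\ge\be-\underline{\ba}_0(M)\), and injective on the corresponding range for \(F_1\). Since \(\underline{\ba}_1(M)\ge\underline{\ba}_0(M)+\bone\) componentwise in a minimal resolution (or at least \(\ge\underline{\ba}_0(M)\)), the \(F_1\) range is contained in the \(F_0\) range, so one statement about \(N\) suffices.

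To control \(N_{\bu}\to H^0(\mathcal X,\cN(\bu))\), we view \(N\) as an \(S\)-module. Then \(H^0\) on \(\mathcal X\) equals \(H^0\) on the ambient stack \([(\Spec S\setminus V(S_\dagger))/\GG_m^2]\), a product of weighted projective stacks. The kernel and cokernel of the comparison map are the local cohomology modules \(H^0_{B}(N)\) and \(H^1_{B}(N)\) with respect to \(B=S_\dagger=\mathfrak m_1\mathfrak m_2\), where \(\mathfrak m_s\) is the ideal of the \(s\)-th block of variables. We then compute these via the minimal free resolution \(F_\bullet({}_SN)\), using the known bigraded local cohomology of \(S\) itself. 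The module \(H^i_B(S)\) is nonzero only for \(i=d_1+1,\,d_2+1,\,|\bd|+1\) (by Mayer--Vietoris \(H_B=H_{\mathfrak m_1}+H_{\mathfrak m_2}-H_{\mathfrak m_1+\mathfrak m_2}\) shifted). These modules are supported in degrees where some coordinate is \(\le -c_s\): for example \(H^{d_1+1}_{\mathfrak m_1}(S)\) lives in first degrees \(\le -c_1\) and in all second degrees \(\ge0\). Chasing the resolution, through the hypercohomology spectral sequence or repeated short exact sequences of syzygies, shows that \(H^0_B(N)_{\bu}\) and \(H^1_B(N)_{\bu}\) vanish provided the relevant twists stay outside the support. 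That is, we need
\[
\bu>\overline{\ba}_{i}({}_SN)-\bc
\]
in the appropriate coordinate, for the homological indices \(i\) equal to \(d_s\), \(d_s+1\), \(|\bd|\) and \(|\bd|+1\). This is exactly where the four terms in \(\be_0\) come from; the mixed index \(\bi=\bd+\bone\) records that each coordinate uses its own block's dimension. The \(+\bone\) turns the strict inequality into \(\ge\).

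I expect the main obstacle to be this bigraded vanishing bookkeeping. In the monograded case of Smith, only \(H_{\mathfrak m}^{d+1}(S)\) appears. Here \(H^{d_1+1}_B(S)\) has support unbounded in the second direction, so one must check that vanishing in one coordinate suffices. This works because the support of \(H^{d_1+1}_{\mathfrak m_1}(S)\otimes k[\mathbf x_2]\) has second degree \(\ge 0\), but the kernel and cokernel only need the first coordinate condition. I would first do the computation for \(N=S(-\ba)\) explicitly, then extend to general \(N\) by induction along the resolution, taking care with the \(\pm\infty\) conventions when \(b_i=0\). Finally, naturality and \(R\)-linearity of the isomorphism follow because all maps are induced by the sheafification functor.
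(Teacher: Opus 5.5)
Your proposal is correct, and its first half matches the paper. Both arguments use the ladder of left-exact Hom sequences and the Serre--Grothendieck sequence to reduce to the vanishing of \(H^{0}_{B}(N)_{\bu}\) and \(H^{1}_{B}(N)_{\bu}\) for \(\bu\ge\be_{0}+\underline{\ba}_{0}(M)\). You ask only for injectivity on the \(F_{1}\)-term rather than bijectivity; this is a harmless sharpening, since the two ranges merge anyway.

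From there the routes diverge.

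\emph{The paper's route.} It applies Mayer--Vietoris to \(N\) itself, reducing to \(H^{p}_{B_{s}}(N)\) for \(p\in\{0,1\}\) and to \(H^{p}_{B_{1}+B_{2}}(N)\) for \(p\in\{1,2\}\). It then bounds these by graded local duality, reading off Ext-vanishing from the dualized resolution. Over \(S_{1}\) the duality is applied slice by slice to the modules \(N_{\bullet,w}\); the slicing keeps the restricted resolution finite. Over \(S\) it is applied for \(B_{1}+B_{2}\).

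\emph{Your route.} You apply Mayer--Vietoris only to the free module \(S\). Then \(H^{i}_{B}(S)_{\bu}\) vanishes once \(H^{i}_{B_{1}}(S)_{\bu}\), \(H^{i}_{B_{2}}(S)_{\bu}\) and \(H^{i+1}_{B_{1}+B_{2}}(S)_{\bu}\) do. The term \(H^{i}_{B_{s}}(S)\) is nonzero only for \(i=d_{s}+1\), and only in degrees whose \(s\)-th coordinate is at most \(-c_{s}\). The term \(H^{i+1}_{B_{1}+B_{2}}(S)\) is nonzero only for \(i=|\bd|+1\), and only in degrees \(\le-\bc\). Dimension shifting along \(F_{\bullet}({}_{S}N)\) then reduces everything to \(H^{p+q}_{B}(F_{q}({}_{S}N))_{\bu}=0\) for all \(q\).

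\emph{Comparison.} Your route is more elementary: it needs no local duality and no slicing. It also makes the mixed index \(\bd+\bone\) transparent. The \(B_{s}\)-part at \(q=d_{s}+1-p\) constrains only the \(s\)-th coordinate, so coincidences such as \(d_{1}=d_{2}\) or \(d_{2}=0\) create no cross terms. The paper's version has the merit of transplanting Smith's duality argument almost verbatim. Both give the same \(\be_{0}\).

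Two slips need fixing.

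\begin{enumerate}
\item The sign of the shift is wrong. Since \(\Hom_{R}(R(-\ba),N)_{\bv}=N_{\bv+\ba}\), the comparison degree is \(\bu=\bv+\ba_{i,j}(M)\), not \(\bv-\ba_{i,j}(M)\). The required range is therefore \(\bu\ge\be+\underline{\ba}_{0}(M)\). Only with this sign is the \(F_{1}\)-range contained in the \(F_{0}\)-range, and only then does your final condition match \(\be_{0}\).
\item The inequality \(\underline{\ba}_{1}(M)\ge\underline{\ba}_{0}(M)+\bone\) is false in the bigraded setting. Each variable has degree \((c,0)\) or \((0,c)\), so a minimal syzygy may raise only one coordinate. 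Your fallback \(\underline{\ba}_{1}(M)\ge\underline{\ba}_{0}(M)\) is what holds, and it is all that is needed.
\end{enumerate}
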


\begin{proof}
We try to mimic Smith's proof in the monograded setting. However,
there is a difficulty coming from the fact that the irrelevant ideal
\(S_{\dagger}\), denoted by \(B\) in this proof, is not {*}local. We
then use a Mayer--Vietoris sequence to reduce the problem to \(*\)local
situations. Since the proof is long, we divide it into four steps.

\textbf{Step 1:} We first reduce the problem to showing that local cohomology
groups of \(N\) vanish in some degrees.
The two sides of the isomorphism
of the proposition fit into exact sequences (\ref{eq:Hom-seq2}) and
(\ref{eq:Hom-seq}), respectively. Therefore, it is enough to show
that the natural map
\[
\Hom_{R}(F_{i}(M),N)_{\bv}\to\Hom_{\mathcal X}(\cF_{i}(M),\cN(\bv))
\]
is an isomorphism for \(\bv\ge\be\) and \(i\in\{0,1\}\).
For a bigraded \(R\)-module \(Q\), let \(\widetilde Q\) denote the associated
\(\GG_m^2\)-equivariant sheaf on
\(U_R:=\Spec R\setminus V(R_\dagger)\), and
let \(\cQ\) denote its descent to
\(\mathcal X=[U_R/\GG_m^2]\).  The character
decomposition of the equivariant sections gives a natural identification
\[
\bigoplus_{\bv\in\ZZ^2}H^0(\mathcal X,\cQ(\bv))
\cong \Gamma(U_R,\widetilde Q),
\]
where the \(\bv\)-summand on the left corresponds to the
\(\bv\)-character subspace on the right.  Applying the multigraded
Serre--Grothendieck correspondence
\cite[(6.3.1)]{maclagan2004multigraded} to
\(Q={}_{S}\Hom_R(F_i(M),N)\), and using the preceding identification,
gives the following exact sequence of bigraded \(S\)-modules.
\begin{multline}\label{eq:exseq}
0\to H^{0}_{B}(_{S}\Hom_{R}(F_{i}(M),N))\to\Hom_{R}(F_{i}(M),N)\to\\
\to\bigoplus_{\bv\in\ZZ^{2}}\Hom_{\mathcal X}(\cF_{i}(M),\cN(\bv))\to H^{1}_{B}(_{S}\Hom_{R}(F_{i}(M),N))\to0.
\end{multline}
We also used
the identifications \(\Hom_{R}(F_{i}(M),N)=\Hom_{S}(_{S}F_{i}(M),_{S}N)\)
and the identification
\[
\Hom_{\mathcal X}(\cF_i(M),\cN(\bv))
=H^0\!\left(\mathcal X,
\mathcal H om_{\mathcal X}(\cF_i(M),\cN(\bv))\right),
\]
where the internal Hom is the equivariant sheafification of
\({}_S\Hom_R(F_i(M),N)(\bv)\), in (\ref{eq:exseq}).
This sequence together with (\ref{eq:Hom-seq})
and (\ref{eq:Hom-seq2}) form the following commutative diagram such
that all the horizontal and vertical sequences are exact.
\[
\begin{tikzcd}[row sep=1pc, column sep=1pc, cells={font=\small}]
 &  & H^{0}_{B}(_{S}\Hom_{R}(F_{0}(M),N))_{\bv}\ar[r]\ar[d] & H^{0}_{B}(_{S}\Hom_{R}(F_{1}(M),N))_{\bv}\ar[d]\\
0\ar[r]\ar[d] & \Hom_{R}(M,N)_{\bv}\ar[r]\ar[d,"\psi"] & \Hom_{R}(F_{0}(M),N)_{\bv}\ar[r]\ar[d,"\psi_{0}"] & \Hom_{R}(F_{1}(M),N)_{\bv}\ar[d,"\psi_{1}"]\\
0\ar[r] & \Hom_{\mathcal X}(\cM,\cN(\bv))\ar[r] & \Hom_{\mathcal X}(\cF_{0}(M),\cN(\bv))\ar[r]\ar[d] & \Hom_{\mathcal X}(\cF_{1}(M),\cN(\bv))\ar[d]\\
 &  & H^{1}_{B}(_{S}\Hom_{R}(F_{0}(M),N))_{\bv}\ar[r] & H^{1}_{B}(_{S}\Hom_{R}(F_{1}(M),N))_{\bv}
\end{tikzcd}
\]
To see that \(\psi\) is an isomorphism as desired, it is enough
to show that \(\psi_{0}\) and \(\psi_{1}\) are isomorphisms, thanks to
the 5-lemma.
In turn, to show the last condition, it is enough
to show
\begin{equation}\label{eq:loc-coh1}
H^{p}_{B}(_{S}\Hom_{R}(F_{i}(M),N))_{\bv}=0\quad(p,i\in\{0,1\},\,\bv\ge\be_{0}).
\end{equation}
Since
\[
H^{p}_{B}(\Hom_{R}(F_{i}(M),N))=\bigoplus^{b_{i}(M)}_{j=1}H^{p}_{B}(N)(\ba_{i,j}(M)),
\]
it is reduced to show
\begin{equation}\label{eq:loc-coh1.5}
H^{p}_{B}(N)_{\bv}=0\quad
\left(p\in\{0,1\},\ \bv\ge\be_{0}+\underline{\ba}_{i}(M),\
i\in\{0,1\}\text{ with }F_i(M)\ne0\right).
\end{equation}
In the rest of the proof, conditions indexed by \(i\in\{0,1\}\) are
likewise imposed only when \(F_i(M)\ne0\).
An obstacle to continuing to mimic Smith's argument \cite{smith2000computing}
is that \(B\) is not {*}local in the sense of \cite[14.1.1]{brodmann2012localcohomology}
so that the local duality \cite[14.4.1]{brodmann2012localcohomology}
does not apply to the local cohomology groups \(H^{p}_{B}(N)\).

\textbf{Step 2:} We reduce (\ref{eq:loc-coh1.5}) to similar vanishing
results for local cohomology groups relative to different ideals so
that local duality can be applied.
For \(s\in\{1,2\}\), we define the
ideal
\[
B_{s}=(x_{s,0},\dots,x_{s,d_{s}})\subset S.
\]
Then \(B=B_{1}B_{2}\). We have the (graded) Mayer--Vietoris sequence
\cite[Theorem 3.2.3 and Exercise 14.1.5]{brodmann2012localcohomology}:
\begin{align*}
0 & \to H^{0}_{B_{1}+B_{2}}(N)\to H^{0}_{B_{1}}(N)\oplus H^{0}_{B_{2}}(N)\to H^{0}_{B_{1}B_{2}}(N)\to\\
 & \to H^{1}_{B_{1}+B_{2}}(N)\to H^{1}_{B_{1}}(N)\oplus H^{1}_{B_{2}}(N)\to H^{1}_{B_{1}B_{2}}(N)\to\\
 & \to H^{2}_{B_{1}+B_{2}}(N)\to\cdots
\end{align*}
Thus, to show (\ref{eq:loc-coh1.5}), it is enough to show
\begin{gather}\label{eq:loc-coh2}
H^{p}_{B_{s}}(N)_{\bv}=0\quad(p\in\{0,1\},\,s\in\{1,2\},\,\bv\ge\be_{0}+\underline{\ba}_{i}(M)\,(i\in\{0,1\}))
\end{gather}
and
\begin{gather}\label{eq:loc-coh3}
H^{p}_{B_{1}+B_{2}}(N)_{\bv}=0\quad(p\in\{1,2\},\,\bv\ge\be_{0}+\underline{\ba}_{i}(M)\,(i\in\{0,1\})).
\end{gather}

\textbf{Step 3:} We show (\ref{eq:loc-coh2}) by using local duality
and reducing the problem to a vanishing result for some Ext groups.
First consider the case \(s=1\). Let
\[
\overline{B}_{1}:=(x_{1,0},\dots,x_{1,d_{1}})\subset S_{1}:=k[x_{1,0},\dots,x_{1,d_{1}}].
\]
We consider the decomposition \(N=\bigoplus_{w\in\ZZ}N_{\bullet,w}\)
of \(N\), where \(N_{\bullet,w}:=\bigoplus_{v\in\ZZ}N_{v,w}\). From
\cite[Example 3.6.10]{bruns1998cohenmacaulay} (see also \cite[Example 4.3(b)]{barile2025textbackslashmathbbztextasciicircumrgraded}),
the {*}canonical module of \(S_{1}\) is \(S_{1}(-c_{1})\). From the
local duality \cite[14.4.1]{brodmann2012localcohomology}, we have
graded isomorphisms
\begin{equation}\label{eq:loc-coh4}
\begin{aligned}H^{p}_{B_{1}}(N) & \cong H^{p}_{\overline{B}_{1}}(_{S_{1}}N) & (\text{op.\,cit., 4.2.1})\\
 & \cong H^{p}_{\overline{B}_{1}}\left(\bigoplus_{w\in\ZZ}N_{\bullet,w}\right)\\
 & \cong\bigoplus_{w\in\ZZ}H^{p}_{\overline{B}_{1}}\left(N_{\bullet,w}\right) & (\text{op.\,cit., 3.4.10})\\
 & \cong\bigoplus_{w\in\ZZ}\Hom_{k}(\Ext^{d_{1}+1-p}_{S_{1}}(N_{\bullet,w},S_{1})(-c_{1}),k). & (\text{op.\,cit., 14.4.1})
\end{aligned}
\end{equation}
In particular, for each \((v,w)\in\ZZ^{2}\), we have the following
implication:
\begin{equation}\label{eq:impli}
\Ext^{d_{1}+1-p}_{S_{1}}(N_{\bullet,w},S_{1})_{-v-c_{1}}=0\Rightarrow H^{p}_{B_{1}}(N)_{v,w}=0.
\end{equation}
To estimate nonvanishing degrees of \(\Ext^{d_{1}+1-p}_{S_{1}}(N_{\bullet,w},S_{1})\),
we get a free resolution of \(N_{\bullet,w}\) by restricting \(F_{\bullet}(_{S}N)\)
to degrees in \(\ZZ\times\{w\}\):
\begin{multline*}
F_{\bullet}(_{S}N)_{\bullet,w}\colon\cdots\to\bigoplus^{b_{1}(_{S}N)}_{j=1}\bigoplus_{\bx\in\fM(w-a_{2,1,j}(_{S}N))}S_{1}(-a_{1,1,j}(_{S}N))\bx\to\\
\to\bigoplus^{b_{0}(_{S}N)}_{j=1}\bigoplus_{\bx\in\fM(w-a_{2,0,j}(_{S}N))}S_{1}(-a_{1,0,j}(_{S}N))\bx\to0.
\end{multline*}
Here \(\fM(c)\) denotes the finite set of monomials in \(x_{2,0},\dots,x_{2,d_{2}}\)
of degree \(c\). We have
\begin{align*}
\Ext^{q}_{S_{1}}(N_{\bullet,w},S_{1}) & \cong H^{q}(\Hom_{S_{1}}(F_{\bullet}(_{S}N)_{\bullet,w},S_{1}))
\end{align*}
and
\[
\Hom_{S_{1}}(F_{q}(_{S}N)_{\bullet,w},S_{1})\cong\bigoplus^{b_{q}(_{S}N)}_{j=1}\bigoplus_{\fM(w-a_{2,q,j}(_{S}N))}S_{1}(a_{1,q,j}(_{S}N)).
\]
For the last isomorphism, we used the finiteness of the direct sum
(in fact, this is the reason why we restrict ourselves to degrees
in \(\ZZ\times\{w\}\) so that the direct sums involved become finite).
From these isomorphisms, we get
\[
\Ext^{q}_{S_{1}}(N_{\bullet,w},S_{1})_{-v-c_{1}}=0\quad(v+c_{1}\ge\overline{a}_{1,q}(_{S}N)+1).
\]
From (\ref{eq:impli}), we have
\begin{equation}\label{eq:van1}
H^{p}_{B_{1}}(N)_{\bv}=0
\end{equation}
if \(\bv=(v_{1},v_{2})\) satisfies \(v_{1}\ge\overline{a}_{1,d_{1}+1-p}(_{S}N)-c_{1}+1\),
in particular, if \(\bv\ge\overline{\ba}_{\bd+\bone-(p,p)}(_{S}N)-\bc+\bone\),
where \(\bc=(c_{1},c_{2})\) and \(c_{2}:=\sum_{t}c_{2,t}\). Similarly,
\begin{equation}\label{eq:van2}
H^{p}_{B_{2}}(N)_{\bv}=0\quad(\bv\ge\overline{\ba}_{\bd+\bone-(p,p)}(_{S}N)-\bc+\bone).
\end{equation}
For each \(p\in\{0,1\}\), we have
\begin{align*}
 & \be_{0}+\underline{\ba}_{0}(M)\\
 & =\bmax\{\overline{\ba}_{\bd+\bone}(_{S}N),\overline{\ba}_{\bd}(_{S}N),\overline{\ba}_{|\bd|+1}(_{S}N),\overline{\ba}_{|\bd|}(_{S}N)\}-\bc+\bone\\
 & \ge\overline{\ba}_{\bd+\bone-(p,p)}(_{S}N)-\bc+\bone.
\end{align*}
If \(F_1(M)=0\), there is no condition indexed by \(i=1\) to check.
Otherwise, minimality of the resolution gives
\(\underline{\ba}_{1}(M)\ge\underline{\ba}_{0}(M)\).  Thus
(\ref{eq:loc-coh2}) follows from (\ref{eq:van1}) and (\ref{eq:van2})
for \(p=0,1\).

\textbf{Step 4:} We show (\ref{eq:loc-coh3}) by an argument similar
to the last step. From \cite[Example 4.3(b)]{barile2025textbackslashmathbbztextasciicircumrgraded},
the {*}canonical module of \(S\) is \(S(-\bc)\) with \(\bc=(c_{1},c_{2})\)
as above. From the local duality \cite[14.4.1]{brodmann2012localcohomology},
we have
\begin{equation}\label{eq:loc-coh5}
H^{p}_{B_{1}+B_{2}}(N)=\Hom_{k}(\Ext^{|\bd|+2-p}_{S}(N,S)(-\bc),k)
\end{equation}
with \(|\bd|=d_{1}+d_{2}\). In particular, for each \(\bv\in\ZZ^{2}\),
we have the following implication:
\begin{equation}\label{eq:impli-2}
\Ext^{|\bd|+2-p}_{S}(N,S)_{-\bv-\bc}=0\Rightarrow H^{p}_{B_{1}+B_{2}}(N)_{\bv}=0.
\end{equation}
Since
\begin{align*}
\Ext^{q}_{S}(N,S) & =H^{q}(\Hom_{S}(F_{\bullet}(_{S}N),S))
\end{align*}
and
\[
\Hom_{S}(F_{q}(_{S}N),S)\cong\bigoplus^{b_{q}(_{S}N)}_{j=1}S(\ba_{q,j}(_{S}N)),
\]
we have
\[
\Ext^{q}_{S}(N,S)_{-\bv-\bc}=0\quad(\bv+\bc\ge\overline{\ba}_{q}(_{S}N)+\bone).
\]
From (\ref{eq:impli-2}), we have
\[
H^{p}_{B_{1}+B_{2}}(N)_{\bv}=0\quad(\bv\ge\overline{\ba}_{|\bd|+2-p}(_{S}N)-\bc+\bone).
\]
Since, for \(p\in\{1,2\}\), we have
\begin{align*}
 & \be_{0}+\underline{\ba}_{0}(M)\ge\overline{\ba}_{|\bd|+2-p}(_{S}N)-\bc+\bone,
\end{align*}
we obtain (\ref{eq:loc-coh3}), as desired. We have completed the
proof of the proposition.
\end{proof}

\begin{cor}
\label{cor:Hom2}
Let \(\br\in\ZZ^{2}\) be such that
\[
\br\ge\bmax\{\overline{\ba}_{\bd+\bone}(_{S}N),\overline{\ba}_{\bd}(_{S}N),\overline{\ba}_{|\bd|+1}(_{S}N),\overline{\ba}_{|\bd|}(_{S}N)\}-\bc+\bone.
\]
Then there is a natural isomorphism of bigraded \(R\)-modules
\[
\bigoplus_{\bv\in\NN^{2}}\Hom_{\mathcal X}(\cM,\cN(\bv))\cong\Hom_{R}(M_{\ge\br},N)_{\ge\bzero}.
\]
\end{cor}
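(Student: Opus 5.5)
We will apply Proposition \ref{prop:Hom} with \(M\) replaced by \(M':=M_{\ge\br}\) and with \(\be:=\bzero\), and then identify the left-hand sides. The first step is to compare the sheaves: \(M/M_{\ge\br}\) is killed by a power of \(B=S_{\dagger}\). Indeed, any homogeneous element of \(M\) of degree \(\bu\) becomes of degree \(\ge\br\) after multiplication by a monomial \(x_{1,t}^{p}x_{2,t'}^{q}\) with \(p,q\gg0\). Since \(M\) is finitely generated, a single power \(B^{N}\) annihilates \(M/M_{\ge\br}\). Hence the associated sheaf on \(\mathcal X\) vanishes, and the inclusion \(M_{\ge\br}\hookrightarrow M\) induces an isomorphism \(\cM'\cong\cM\). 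Consequently
\[
\bigoplus_{\bv\in\NN^{2}}\Hom_{\mathcal X}(\cM,\cN(\bv))\cong\bigoplus_{\bv\ge\bzero}\Hom_{\mathcal X}(\cM',\cN(\bv)).
\]
If \(M_{\ge\br}=0\), both sides vanish, so we may assume \(M'\ne0\).

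The second step, the key estimate, is to check that \(\bzero\ge\be_{0}(M')\), where
\[
\be_{0}(M')=\bmax\{\dots\}-\underline{\ba}_{0}(M')-\bc+\bone
\]
is the vector of Proposition \ref{prop:Hom} computed for \(M'\). It suffices that \(\underline{\ba}_{0}(M')\ge\br\), since then \(\be_{0}(M')\le\bmax\{\dots\}-\bc+\bone-\br\le\bzero\) by the hypothesis on \(\br\). The inequality \(\underline{\ba}_{0}(M')\ge\br\) holds because \(M'\) is concentrated in degrees \(\ge\br\). A minimal homogeneous generating set of \(M'\) therefore lives in degrees \(\ge\br\), so every twist \(\ba_{0,j}(M')\) in \(F_{0}(M')\) satisfies \(\ba_{0,j}(M')\ge\br\) componentwise. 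This uses that minimal generators of a bigraded module over a positively bigraded ring with \(R_{\bzero}=k\) lie in the support of \(M'/R_{\dagger}M'\); here we should note that minimality of bigraded resolutions is meaningful for this grading, since all variables have degree \(>0\) in the partial order, as was already assumed in the definition.

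The third step is the conclusion. Proposition \ref{prop:Hom} with \(\be=\bzero\ge\be_{0}(M')\) gives
\[
\bigoplus_{\bv\ge\bzero}\Hom_{\mathcal X}(\cM',\cN(\bv))\cong\Hom_{R}(M',N)_{\ge\bzero},
\]
and composing with the first step yields the claim. Naturality follows because both isomorphisms are induced by the natural maps (sheafification and restriction along \(M'\subset M\)).

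The main obstacle I anticipate is the first step in the stacky setting. We must justify that a bigraded module annihilated by a power of \(B\) gives the zero sheaf on \(\mathcal X\). This follows because on each chart \(D(y_jx_i)\) the elements of \(B\) become invertible, so the localized module vanishes. The remaining care is ensuring that the isomorphism \(\cM'\cong\cM\) is compatible with the identifications used in Proposition \ref{prop:Hom}, which holds since everything is functorial in \(M\).
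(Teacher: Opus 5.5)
Your proposal is correct and follows the paper's proof essentially step for step. You identify the sheaves of \(M_{\ge\br}\) and \(M\) on \(\mathcal X\) (the paper says the inclusion becomes an isomorphism after inverting each \(x_{1,t_1}x_{2,t_2}\)), dispose of the case \(M_{\ge\br}=0\), and use \(\underline{\ba}_0(M_{\ge\br})\ge\br\) to get \(\be_0\le\bzero\), so that Proposition \ref{prop:Hom} applies with \(\be=\bzero\).

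The only quibble concerns the inequality \(\underline{\ba}_0(M_{\ge\br})\ge\br\). It needs no Nakayama-type argument: every homogeneous element of \(M_{\ge\br}\), in particular every generator, already has degree \(\ge\br\). Also, \(R_\dagger\) in the paper denotes the irrelevant ideal generated by the products \(x_{1,t}x_{2,t'}\), not the homogeneous maximal ideal you meant there.
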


\begin{proof}
The inclusion \(M_{\ge\br}\hookrightarrow M\) becomes an isomorphism
after localization at every \(x_{1,t_1}x_{2,t_2}\), so the two modules
determine the same coherent sheaf \(\cM\) on \(\mathcal X\).  If
\(M_{\ge\br}=0\), both sides of the asserted isomorphism are therefore
zero.  We may hence assume that \(M_{\ge\br}\ne0\).  We then have
\[
\underline{\ba}_{0}(M_{\ge\br})\ge\br\ge
\bmax\{\overline{\ba}_{\bd+\bone}(_{S}N),\overline{\ba}_{\bd}(_{S}N),
\overline{\ba}_{|\bd|+1}(_{S}N),\overline{\ba}_{|\bd|}(_{S}N)\}
-\bc+\bone.
\]
Consequently, the bound \(\be_0\) of Proposition \ref{prop:Hom}, with
\(M\) replaced by \(M_{\ge\br}\), satisfies
\[
\be_{0}\le\br-\underline{\ba}_{0}(M_{\ge\br})\le\bzero.
\]
The proposition is therefore applicable with \(\be=\bzero\), proving
the corollary.
\end{proof}

\section{Stein factorizations and contraction morphisms\label{sec:Stein-factorization}}

In this section, we give an algorithm to compute the Stein factorization
of a morphism of projective varieties and apply it to find contraction
morphisms.

\subsection{Computing a homogeneous coordinate ring of \(Z\)\label{subsec:Computing-homog-coord}}

Let \(f\colon Y\to X\) be a graph morphism of monograded varieties.
Replacing \(X\) with the image of \(f\) if necessary, we assume without
loss of generality that \(f\) is surjective. Suppose that \(Y\) and
\(X\) are written as
\begin{gather*}
Y=\Proj B,\quad B=k[y_{0},\dots,y_{n}]/\fq,\\
X=\Proj A,\quad A=k[x_{0},\dots,x_{m}]/\fp.
\end{gather*}
The graph \(\Gamma_{f}\) of \(f\) is written as
\[
\Gamma_{f}=\biProj R,\quad R=(B\otimes A)/\ft.
\]
As in Section \ref{sec:Bigraded-global-Hom}, we consider the bigraded
polynomial ring
\begin{align}\label{eq:bigr-poly}
S & =k[y_{0},\dots,y_{n},x_{0},\dots,x_{m}].
\end{align}
We can rewrite \(R\) as \(S/\widetilde{\ft}\), where \(\widetilde{\ft}\)
is the preimage of \(\ft\) in \(S\).

The Stein factorization
\[
Y\xrightarrow{h}Z\xrightarrow{g}X
\]
of \(f\) is given by setting \(Z=\cSpec_{X}\,f_{*}\cO_{Y}\) (see the
proof of \cite[Chapter III, Corollary 11.5]{hartshorne1977algebraic}).
Put
\[
\mathcal X:=
\left[(\Spec A\setminus V(A_{\dagger}))/\GG_m\right]
\]
and
\[
\mathcal G_f:=
\left[(\Spec R\setminus V(R_{\dagger}))/\GG_m^2\right].
\]
Their coarse moduli spaces are \(X\) and \(\Gamma_f\), respectively.
The second projection restricts to a morphism
\[
\pr_2\colon\Gamma_f\to X,
\]
which is identified with \(f\colon Y\to X\) under the isomorphism
\(\Gamma_f\cong Y\).  The inclusion \(A\to R\) and the second projection
\(\GG_m^2\to\GG_m\) induce its lift to a proper morphism
\[
\widetilde{\pr}_2
\colon\mathcal G_f\to\mathcal X
\]
of quotient stacks.
Let
\[
\mathcal F:=(\widetilde{\pr}_2)_*
\cO_{\mathcal G_f}
\]
and define the graded ring
\begin{align*}
C
&:=\bigoplus_{v\in\NN}
H^0\bigl(\mathcal X,\mathcal F\otimes\cO_{\mathcal X}(v)\bigr)\\
&=\bigoplus_{v\in\NN}
H^0\bigl(\mathcal G_f,\cO_{\mathcal G_f}(0,v)\bigr).
\end{align*}
The pullback along the second projection
\(\GG_m^2\to\GG_m\) induces the inclusion of character
lattices \(\ZZ\to\ZZ^2\), \(v\mapsto(0,v)\).  Hence
\[
\widetilde{\pr}_2^*\cO_{\mathcal X}(v)
\cong\cO_{\mathcal G_f}(0,v).
\]
The projection formula gives
\[
\mathcal F\otimes\cO_{\mathcal X}(v)
\cong
(\widetilde{\pr}_2)_*
\cO_{\mathcal G_f}(0,v),
\]
which proves the second equality above after taking global sections.
The multiplication on \(\mathcal F\) makes \(C\) a graded \(A\)-algebra,
with its natural homogeneous map \(\phi\colon A\to C\).
\begin{lem}
\label{lem:mor-Proj}
The natural map \(\phi\colon A\to C\) of graded
rings induces the morphism \(g\colon Z\to X\) appearing in the Stein
factorization of \(f\).
\end{lem}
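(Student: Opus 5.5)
The plan is to show that \(\Proj C\), with the morphism induced by \(\phi\), is canonically isomorphic over \(X\) to \(Z=\cSpec_X f_*\cO_Y\). Since both sides are affine over \(X\), the comparison can be done chart by chart on the cover \(D_+(a)\), \(a\in A\) homogeneous of positive degree, by checking that the rings agree. The natural candidate for the comparison is the sheaf \(\mathcal F=(\widetilde{\pr}_2)_*\cO_{\mathcal G_f}\): it is a coherent sheaf of graded \(\cO_{\mathcal X}\)-algebras, and by construction \(C\) is its graded module of twisted global sections.

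\textbf{Step 1: identify \(\mathcal F\) with \(f_*\cO_Y\).} First I relate the stacky push-forward to \(f_*\cO_Y\). Over \(D_+(a)\subset X\), the preimage in \(\mathcal G_f\) is covered by the charts \(D_+(y_j a)\). Taking \(\GG_m^2\)-invariants (bidegree \((0,0)\) parts) of the localizations recovers \(\Gamma(\pr_2^{-1}D_+(a),\cO_{\Gamma_f})\). Since \(\Gamma_f\cong Y\), this equals \(\Gamma(D_+(a),f_*\cO_Y)\). More precisely, the degree-\(0\) part of
\[
\Gamma\bigl(\widetilde{\pr}_2^{-1}\mathcal X_a,\cO_{\mathcal G_f}\bigr)=\bigoplus_{v}\Gamma\bigl(\widetilde{\pr}_2^{-1}\mathcal X_a,\cO_{\mathcal G_f}(0,v)\bigr)
\]
is \(\Gamma(f^{-1}D_+(a),\cO_Y)\). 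This uses that taking invariants of a linearly reductive group commutes with the Čech complex of the cover, so coarse-space sections are invariant sections.

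\textbf{Step 2: compute \(C_{(a)}\) and glue.} Next I show that \((C_a)_0\cong\Gamma(D_+(a),f_*\cO_Y)\) for each homogeneous \(a\in A_{\dagger}\). An element of \((C_a)_0\) is \(c/a^r\) with \(c\in C_{r\deg a}=H^0(\mathcal G_f,\cO(0,r\deg a))\), and it restricts to an invariant section over \(\widetilde{\pr}_2^{-1}\mathcal X_a\). Injectivity holds because a global section of a twist of \(\cO\) that vanishes where \(a\) is invertible is killed by a power of \(a\). Surjectivity is the standard statement that a section over the complement of \(V(a)\) extends to a global section after multiplying by \(a^r\) for large \(r\), that is, \(\Gamma(U_a,\mathcal L)=\varinjlim \Gamma(U,\mathcal L(r\deg a))\) via \(a^r\) (Hartshorne II.5.14 style, applied to the coherent sheaf \(\mathcal F\) on \(\mathcal X\)). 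The twists \(\cO_{\mathcal X}(\deg a)\) are invertible on the stack, and \(a\) is a section of them. Hence \(\Spec (C_a)_0\cong g^{-1}D_+(a)\) compatibly with \(A_{(a)}\to (C_a)_0\). These identifications commute with further localization, so they glue to an isomorphism \(\Proj C\cong Z\) over \(X\).

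\textbf{Step 3: the covering condition.} Finally I check that \(\Proj C\to\Proj A\) is defined everywhere, i.e.\ \(\sqrt{\phi(A_\dagger)C}=C_\dagger\). Since \(\Proj C\) is covered by the \(D_+(\phi(a))\), this follows once every homogeneous \(c\in C_\dagger\) has a power in \(\phi(A_\dagger)C\). That in turn follows from Step 2 applied locally: on each chart, \(c\) becomes \(a^{r}\cdot(\text{section})\), since \(V(A_\dagger)\) is empty on \(\mathcal X\). I expect the main obstacle to be Step 2, specifically the extension-of-sections statement on the quotient stack \(\mathcal X\) with non-standard weights, where \(\cO_{\mathcal X}(1)\) is not ample as a line bundle on the coarse space. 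I would handle this by working \(\GG_m\)-equivariantly on \(U_A=\Spec A\setminus V(A_\dagger)\), where the statement reduces to the affine fact \(\Gamma(U_A\cap D(a),\widetilde Q)=\Gamma(U_A,\widetilde Q)_a\) for quasi-coherent \(\widetilde Q\), and then taking the degree-zero character space.
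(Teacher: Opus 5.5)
Your core argument is correct, and for the key localization step it takes a genuinely different route from the paper's.

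\textbf{The paper's route.} It descends to the coarse space. With $l=\lcm(\deg x_i)$, it uses $\cO_{\mathcal X}(nl)\cong p^*\cO_X(nl)$ to write $C_{nl}=H^0(X,p_*\mathcal F\otimes\cO_X(nl))$. It then applies Serre's localization argument on the scheme $X$ to the $l$-th Veronese of $C$, so only charts $D_+(\alpha)$ with $\alpha\in A_l$ occur. Finally it passes from $\widetilde{\pr}_2^{-1}(\mathcal U_\alpha)$ to $f^{-1}(U_\alpha)$ via the tame-stack identity $\pi_*\cO_{\mathcal G_f}=\cO_{\Gamma_f}$ of Abramovich--Olsson--Vistoli.

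\textbf{Your route.} You stay on the punctured cone $U_A$. There $C$ is, up to negative degrees (which do not affect $(C_a)_0$), the module of global sections of the $\GG_m$-equivariant quasi-coherent sheaf $\widetilde Q$ underlying $\mathcal F$. Every homogeneous $a$ of positive degree is a function with $D(a)\subset U_A$, so $\Gamma(D(a),\widetilde Q)=\Gamma(U_A,\widetilde Q)_a$, and you take weight-zero parts. This identity is not an affine fact, since $U_A$ is not affine; it is Hartshorne's Lemma II.5.14 with the trivial line bundle, valid because $U_A$ is quasi-compact and separated. You likewise replace the stack citation by exactness of taking torus weight-zero parts on the \v{C}ech complex of the saturated affine charts $D(y_ja)$. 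In your Step 1 display, the sum over $v$ is the ring of functions on the partial quotient by $\GG_m\times1$, not sections over the stack itself; this is only notational.

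\textbf{Comparison.} Your route treats every homogeneous $a\in A_\dagger$ uniformly and needs no descent of twists, no ampleness, no Veronese subring and no theory of tame stacks. The paper's route is shorter because it reduces everything to textbook facts about projective schemes and coarse moduli spaces.

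\textbf{The gap in Step 3.} The argument you give there does not justify the covering condition. The rings $(C_a)_0$ only describe the open subset $\bigcup_a D_+(\phi(a))$ of $\Proj C$, and its equality with $\Proj C$ is exactly what is at stake. So no chart-by-chart reasoning over $\mathcal X$ can rule out a homogeneous prime of $C$ that contains $\phi(A_\dagger)$ but not $C_\dagger$. That $A_\dagger$ generates the unit ideal sheaf on $\mathcal X$ says nothing about the ideal $\phi(A_\dagger)C$ of the global-section ring. For instance, adjoining to $C$ a degree-one element annihilated by $C_\dagger$ leaves every $(C_a)_0$ unchanged but creates such a prime.

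\textbf{The repair.} What is needed is a global finiteness input: $C$ is a finite graded $A$-module. This follows from coherence of $\mathcal F$, or concretely from the finite presentation of $\Hom_R(R_{\ge\br},R)_{0,\ge0}$ obtained after the lemma. The determinant trick on homogeneous generators then gives, for homogeneous $c\in C_v$ with $v>0$, a relation $c^n+\alpha_1c^{n-1}+\dots+\alpha_n=0$ with $\alpha_j\in A_{jv}\subset A_\dagger$. Hence $c^n\in\phi(A_\dagger)C$. The paper's own proof leaves this covering condition implicit, so with this repair your proposal is a complete proof.
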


\begin{proof}
Let \(\phi\colon A\to C\) be the natural homogeneous map defined above,
and let \(l:=\lcm(\deg x_0,\dots,\deg x_m)\).  The nonempty opens
\[
D_+(x_i^{l/\deg x_i})\subset X
\]
form an affine cover; more generally, write \(U_\alpha:=D_+(\alpha)\)
for one of these opens, with \(\alpha\in A_l\), and let
\(\mathcal U_\alpha\subset\mathcal X\) be the corresponding open substack.  The line bundle \(\cO_{\mathcal X}(l)\)
descends to the ample line bundle \(\cO_X(l)\).  Let
\(p\colon\mathcal X\to X\) be the coarse moduli map and put
\(\mathcal E:=p_*\mathcal F\).  Since
\(\cO_{\mathcal X}(nl)\cong p^*\cO_X(nl)\), the projection formula gives
\[
C_{nl}=H^0\bigl(X,\mathcal E\otimes\cO_X(nl)\bigr).
\]
We apply the usual Serre localization argument not on the stack but
on the scheme \(X\), to the sheaf \(\mathcal E\) and the ample line bundle
\(\cO_X(l)\).  The standard proof of
\cite[Chapter II, Proposition 5.11]{hartshorne1977algebraic}, applied
to the \(l\)-th Veronese of \(C\), gives
\[
C_{(\phi(\alpha))}=H^0(U_\alpha,\mathcal E).
\]
Since \(\mathcal U_\alpha=p^{-1}(U_\alpha)\), the definition of direct
image gives
\[
H^0(U_\alpha,\mathcal E)
=H^0(\mathcal U_\alpha,\mathcal F).
\]
Finally, the definition
\(\mathcal F=(\widetilde{\pr}_2)_*
\cO_{\mathcal G_f}\) gives
\begin{equation}\label{eq:stack-Stein-localization}
C_{(\phi(\alpha))}
=H^0(\mathcal U_\alpha,\mathcal F)
=H^0(\widetilde{\pr}_2^{-1}(\mathcal U_\alpha),
\cO_{\mathcal G_f}).
\end{equation}

The stacks \(\mathcal X\) and \(\mathcal G_f\) are tame in characteristic
zero.  For the coarse moduli map
\(\pi\colon\mathcal G_f\to\Gamma_f\) we therefore have
\(\pi_*\cO_{\mathcal G_f}=\cO_{\Gamma_f}\)
\cite[Theorem 3.2]{abramovich2008tame}.  The coarse space of
\(\widetilde{\pr}_2^{-1}(\mathcal U_\alpha)\) is
\(f^{-1}(U_\alpha)\), as is also seen directly from the invariant rings
on these quotient charts.  Hence \eqref{eq:stack-Stein-localization}
continues as
\[
C_{(\phi(\alpha))}
=H^0(f^{-1}(U_\alpha),\cO_Y)
=H^0(U_\alpha,f_*\cO_Y).
\]
These isomorphisms commute with further localization, so they identify
the sheaf associated with the graded \(A\)-module \(C\) with
\(f_*\cO_Y\).  Under this identification the localized map
\[
A_{(\alpha)}\to C_{(\phi(\alpha))}
\]
is the natural map
\(\cO_X(U_\alpha)\to(f_*\cO_Y)(U_\alpha)\).  Thus
\(\Proj C\to\Proj A\) and the morphism
\[
g\colon\cSpec_X f_*\cO_Y\to X
\]
are obtained by gluing the same
morphisms of affine schemes, proving the lemma.
\end{proof}

\begin{rem}
The quotient stacks are used here to retain all grading indices; they
are not intrinsic to the Stein factorization. With \(l\) as in the proof,
one could instead work on the coarse spaces and use only the degrees
divisible by \(l\). Indeed, \(\cO_{\mathcal X}(nl)\) descends to the line
bundle \(\cO_X(nl)\), and the resulting construction uses the Veronese
subrings
\[
A^{(l)}:=\bigoplus_{n\geq0}A_{nl}
\quad\text{and}\quad
C^{(l)}:=\bigoplus_{n\geq0}C_{nl}.
\]
Since taking these Veronese subrings does not change the \(\Proj\) of either ring,
this gives the same morphism \(Z\to X\) without using quotient stacks.
We use the stacky formulation
because \(\cO_{\mathcal X}(v)\) is a line bundle for every \(v\), so that
we may retain every graded piece \(C_v\). Passing to multiples of \(l\) increases the degrees occurring in the computation;
retaining the smallest possible indices is generally more efficient.
\end{rem}

We now apply Corollary \ref{cor:Hom2}, in its quotient-stack form, to
the bigraded rings \(S\) and \(R\) defined in this section, with \(M=N=R\).
Let \(\br\in\ZZ^2\) be as in that corollary.  Then
\begin{align*}
C
&=\bigoplus_{v\ge0}
H^0(\mathcal G_f,\cO_{\mathcal G_f}(0,v))\\
&=\bigoplus_{v\ge0}
\Hom_{\mathcal G_f}
(\cO_{\mathcal G_f},\cO_{\mathcal G_f}(0,v))\\
&=\bigoplus_{v\ge0}\Hom_R(R_{\ge\br},R)_{0,v}\\
&=:\Hom_R(R_{\ge\br},R)_{0,\ge0}.
\end{align*}
The isomorphism is induced by equivariant sheafification and hence is
compatible with multiplication.  Thus the last module carries the
graded \(A\)-algebra structure on \(C\), not merely its \(A\)-module
structure.
To compute this graded ring, we first compute \(\Hom_{R}(R_{\ge\br},R)_{\ge\bzero}\)
as an \(R\)-module and get a free presentation
\[
G\to F\to\Hom_{R}(R_{\ge\br},R)_{\ge\bzero}\to0,
\]
where \(G\) and \(F\) are free \(R\)-modules and maps are degree-preserving.
Restricting this to the part of degrees in \(\{0\}\times\NN\), we get
an exact sequence
\[
G_{0,\ge0}\to F_{0,\ge0}\to\Hom_{R}(R_{\ge\br},R)_{0,\ge0}\to0
\]
of graded modules over \(R_{0,\ge0}=A\). Next, we compute a surjection
\(G'\to G_{0,\ge0}\) from a free \(A\)-module \(G'\) and a free presentation
of the \(A\)-module \(F_{0,\ge0}\),
\begin{gather*}
F''\to F'\to F{}_{0,\ge0}\to0.
\end{gather*}
We then lift the map \(G_{0,\ge0}\to F_{0,\ge0}\) to a map \(G'\to F'\).
The induced sequence
\[
G'\oplus F''\to F'\to\Hom_{R}(R_{\ge\br},R)_{0,\ge0}\to0
\]
is a free presentation of \(C=\Hom_{R}(R_{\ge\br},R)_{0,\ge0}\) as
a graded \(A\)-module.

Next we describe \(C\) as an \(A\)-algebra. We fix a bihomogeneous nonzero
element \(\gamma\in R_{\ge\br}\), say the image of some monomial in
\(S\). The localization \(R_{\gamma}\) of the ring \(R\) by \(\gamma\),
explicitly presented as \(R[u]/\langle u\gamma-1\rangle\), has a natural
\(\ZZ^{2}\)-grading with \(\deg(\gamma^{-1})=-\deg\gamma\).
\begin{lem}
\label{lem:iota}
The map
\[
\iota\colon\Hom_{R}(R_{\ge\br},R)_{0,\ge0}\to R_{\gamma},\quad\psi\mapsto\psi(\gamma)/\gamma
\]
is a degree-preserving injective ring homomorphism.
\end{lem}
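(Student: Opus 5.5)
The plan is to verify the three properties directly from \(R\)-linearity of \(\psi\) and the fact that \(R\) is a domain. We have \(R=(B\otimes A)/\ft\) with \(\ft\) the prime ideal of the graph \(\Gamma_f\), which is a variety. Hence \(R\) is an integral domain, \(\gamma\ne0\) is a nonzerodivisor, and \(R\to R_{\gamma}\) is injective. All statements can therefore be checked inside the fraction field of \(R\).

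\textbf{Degree preservation.} Let \(\psi\) be homogeneous of degree \((0,v)\). Then \(\psi(\gamma)\) has degree \(\deg\gamma+(0,v)\). Since \(\deg(\gamma^{-1})=-\deg\gamma\), the element \(\psi(\gamma)/\gamma\) has degree \((0,v)\). Extending additively over homogeneous components gives a degree-preserving additive map.

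\textbf{Injectivity and independence of \(\gamma\).} The key identity is
\[
\psi(\rho)=\rho\,\psi(\gamma)/\gamma\quad\text{in }R_\gamma,\qquad\rho\in R_{\ge\br}.
\]
Here \(\gamma\rho\in R_{\ge\br}\), because \(R_{\ge\br}\) is an ideal: degrees of \(R\) lie in \(\NN^2\), so multiplying by an element of \(R\) keeps degrees \(\ge\br\). Using \(R\)-linearity twice,
\[
\gamma\psi(\rho)=\psi(\gamma\rho)=\rho\psi(\gamma),
\]
and we divide by \(\gamma\).
\begin{itemize}
\item Injectivity: if \(\psi(\gamma)/\gamma=0\), then \(\psi(\gamma)=0\) because \(R\) is a domain. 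The identity then gives \(\psi(\rho)=0\) for all \(\rho\), so \(\psi=0\).
\item Independence: the same identity shows that \(\iota\) does not depend on the choice of \(\gamma\).
\end{itemize}

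\textbf{Compatibility with the ring structure.} The multiplication on \(\Hom_R(R_{\ge\br},R)_{0,\ge0}\) is the one transported from \(C\) through the sheafification isomorphism. The main obstacle is to make it concrete enough to compare with multiplication in \(R_\gamma\).

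Under sheafification, \(\psi\) corresponds to the global section of \(\cO_{\mathcal G_f}(0,v)\) that restricts on the chart where \(\rho\) is invertible to \(\psi(\rho)/\rho\). By the identity above, this is the single element \(\psi(\gamma)/\gamma\) of the common fraction field, independent of \(\rho\). Global sections of the twisted structure sheaves of the integral stack embed into the fraction field of \(R\) compatibly with multiplication. The product \(\psi\cdot\psi'\) therefore corresponds to the section \((\psi(\gamma)/\gamma)(\psi'(\gamma)/\gamma)\), and this is \(\iota(\psi)\iota(\psi')\) by construction.

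Alternatively, one can describe the product directly. For \(\rho\) of sufficiently large degree, take \((\psi\psi')(\rho)=\psi(\psi'(\rho))\). This is defined because \(\psi'\) raises degree by \((0,v')\), so \(\psi'(\rho)\in R_{\ge\br}\) whenever \(\rho\in R_{\ge\br}\). Applying the identity twice gives
\[
\psi(\psi'(\gamma))/\gamma=\psi(\gamma)\psi'(\gamma)/\gamma^2 .
\]
The unit is the inclusion \(R_{\ge\br}\hookrightarrow R\), which maps to \(\gamma/\gamma=1\). Together these show that \(\iota\) is a ring homomorphism.
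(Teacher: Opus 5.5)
Your proof is correct. It differs from the paper's mainly in how injectivity, and in your second variant multiplicativity, is obtained.

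\textbf{The paper's route.} The argument is entirely geometric. It identifies $\iota$ with restriction of the section $s_\psi\in H^0(\mathcal G_f,\cO_{\mathcal G_f}(0,v))$ to the open substack $\mathcal D(\gamma)$. Injectivity then follows from density of $\mathcal D(\gamma)$ in the integral stack, together with the isomorphism between the Hom module and sections. Multiplicativity follows because restriction respects products.

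\textbf{Your route.} You derive injectivity algebraically from the identity $\gamma\psi(\rho)=\rho\psi(\gamma)$ in the domain $R$. This works on all of $\Hom_R(R_{\ge\br},R)$ and needs neither Corollary \ref{cor:Hom2} nor any density statement. It also shows at once that $\iota$ does not depend on the choice of $\gamma$.

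Your first multiplicativity argument is essentially the paper's, phrased in the fraction field of $R$. That phrasing conveniently sidesteps whether $D(\gamma)$ lies inside $\Spec R\setminus V(R_\dagger)$. Your second argument, via composition, gives a concrete algebraic description of the product.

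If you rely on the composition version alone, add one line. The ring structure on the Hom module is by definition the one transported from $C$. So you must note that composing $\cO\to\cO(0,v')\to\cO(0,v+v')$ corresponds under sheafification to multiplying the sections.

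\textbf{Trade-off.} The paper's route is shorter given the sheaf isomorphism. It also directly records the geometric meaning of $\iota$ as restriction to $D_+(\gamma)$, which is what is used afterwards when $\ker\Psi$ is identified with the ideal of $\Gamma_h$. Yours is more elementary and more self-contained.
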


\begin{proof}
Under the isomorphism established above, a homogeneous element
\(\psi\in\Hom_R(R_{\ge\br},R)_{0,v}\) corresponds to a section
\[
s_\psi\in H^0(\mathcal G_f,\cO_{\mathcal G_f}(0,v)).
\]
Let \(\mathcal D(\gamma)\subset\mathcal G_f\) be the open substack on
which \(\gamma\) is nonzero.  Equivariant sheafification identifies
the restriction of \(s_\psi\) to this open with the homogeneous
fraction
\[
\frac{\psi(\gamma)}{\gamma}\in R_\gamma.
\]
Thus \(\iota\) is restriction of sections, followed by the natural
embedding of the corresponding homogeneous part into \(R_\gamma\).
Since \(R\) is a domain and \(\gamma\ne0\), the open
\(\mathcal D(\gamma)\) is nonempty and dense in the integral stack
\(\mathcal G_f\).  A section of a line bundle that vanishes there
vanishes everywhere, so \(\iota\) is injective.  Restriction respects
products, and hence \(\iota\) is a ring homomorphism.  Finally, if
\(\deg\gamma=(a,b)\) and \(\deg\psi=(0,v)\), then
\[
\deg\frac{\psi(\gamma)}{\gamma}
=((a,b)+(0,v))-(a,b)=(0,v),
\]
so the map preserves degrees.
\end{proof}

If \(\psi_{1},\dots,\psi_{l}\) are homogeneous generators of \(C\) as
an \(A\)-module, then we have
\[
C=A[\psi_{1}(\gamma)/\gamma,\dots,\psi_{l}(\gamma)/\gamma]\subset R_{\gamma}.
\]
Thus, we get an explicit presentation of \(C\) by computing the kernel
of the \(A\)-algebra homomorphism
\[
A[z_{1},\dots,z_{l}]\to R_{\gamma},\quad z_{i}\mapsto\psi_{i}(\gamma)/\gamma.
\]
In summary, we can get an explicit presentation of the graded ring
\(C\) as an \(A\)-algebra.

The degree-zero part of \(C\) is \(H^{0}(X,f_{*}\cO_{Y})=H^{0}(Y,\cO_{Y})\),
a finite extension field of \(k\), and it need not be \(k\) itself once \(k\)
is not algebraically closed. The homogeneous coordinate ring of a
monograded variety, on the other hand, has degree-zero part \(k\). We
therefore pass to the following subring, which changes neither
\(\Proj C\) nor the morphism to \(X\).
\begin{lem}
\label{lem:section-ring-over-k}
Put \(L:=C_{0}\) and
\[
C^{[k]}:=k\oplus\bigoplus_{v>0}C_{v}\subset C.
\]
Then \(C^{[k]}\) is a graded subring of \(C\) with \(C^{[k]}_{0}=k\), finite
as an \(A\)-module, the homogeneous map \(A\to C\) factors through it, and
the inclusion \(C^{[k]}\subset C\) induces an isomorphism
\(\Proj C^{[k]}\cong\Proj C\) over \(X\). Moreover we can compute an
explicit presentation of \(C^{[k]}\).
\end{lem}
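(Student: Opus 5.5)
The plan is to verify the ring-theoretic assertions directly, then compare the two \(\Proj\)'s chart by chart, and finally produce the presentation from the one already computed for \(C\).

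\emph{Subring and factorization.} Since \(C\) is graded and \(L=C_{0}\) is a field containing \(k\), the subset \(C^{[k]}\) is closed under multiplication: \(C_{v}C_{w}\subset C_{v+w}\) with \(v+w>0\) whenever \(v>0\) or \(w>0\), and \(k\cdot C_{v}\subset C_{v}\). It contains \(1\), so it is a graded subring with \(C^{[k]}_{0}=k\). The homogeneous map \(\phi\colon A\to C\) sends \(A_{0}=k\) into \(k\subset L\) and \(A_{v}\) into \(C_{v}\), so it factors through \(C^{[k]}\).

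\emph{Finiteness and the isomorphism of \(\Proj\)'s.} The module \(C\) is finite over \(A\), since it is computed above with an explicit finite presentation. \(C^{[k]}\) is an \(A\)-submodule of \(C\), because \(A_{>0}\) sends \(C\) into \(C_{>0}\). As \(A\) is Noetherian, \(C^{[k]}\) is finite over \(A\). For the \(\Proj\) statement, I take a homogeneous \(c\in C^{[k]}_{v}\) with \(v>0\) and compare degree-zero localizations. An element \(b/c^{n}\) with \(b\in C_{nv}\) already lies in \((C^{[k]})_{(c)}\) when \(n>0\), and every element of \(C_{(c)}\) can be written with \(n>0\) by multiplying numerator and denominator by \(c\). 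Hence \((C^{[k]})_{(c)}=C_{(c)}\). Moreover \(C^{[k]}_{+}=C_{+}\), so the opens \(D_{+}(c)\) for \(c\in C_{+}\) cover both \(\Proj\)'s, and these identifications are compatible with further localization. The resulting isomorphism lies over \(X\) because \(\phi\) factors through \(C^{[k]}\).

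\emph{Presentation.} Here is the step I expect to need the most care. From the presentation \(C=A[z_{1},\dots,z_{l}]/J\) we may assume the generators \(z_{i}=\psi_{i}(\gamma)/\gamma\) are homogeneous. Then \(C_{0}=L\) is spanned over \(k\) by the degree-zero monomials, which are finitely many as a \(k\)-space because \(C_{0}=H^{0}(Y,\cO_{Y})\) is finite-dimensional. Using Gröbner bases, I would compute:
\begin{enumerate}
\item a \(k\)-basis \(1=e_{0},\dots,e_{r}\) of \(L\);
\item homogeneous \(A\)-module generators of \(C_{>0}\), namely the products \(e_{s}z_{i}\) for the \(z_{i}\) of positive degree together with \(e_{s}\phi(a)\) for the homogeneous generators \(a\) of \(A_{>0}\).
\end{enumerate}
These generate \(C_{>0}\) as an \(A\)-module, because \(C\) is generated as an \(A\)-module by \(L\) and the monomials in positive-degree \(z_{i}\). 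Taking them as algebra generators gives \(C^{[k]}=A[\text{these}]\), which is finite as established. Its presentation is the kernel of the corresponding map from a polynomial ring over \(A\) to \(C\) (or to \(R_{\gamma}\)), computed by elimination exactly as was done for \(C\).
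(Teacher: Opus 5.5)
Your proposal is essentially the paper's proof. It has the same subring and factorization check, finiteness from Noetherianity of \(A\), and the \(\Proj\) comparison via the common positive-degree part; your chart identity \((C^{[k]})_{(c)}=C_{(c)}\) makes explicit what the paper states in one line. It also builds the presentation the same way, from a \(k\)-basis \(e_s\) of \(L\) times the positive-degree generators.

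One justification is off. That \(C\) is spanned over \(A\) by \(L\) times monomials in the positive-degree \(z_i\) does not put products like \(z_iz_j\) into the \(A\)-span of the \(e_sz_i\) and \(e_s\phi(a)\). The module claim does in fact hold, but only because the \(z_i=\psi_i(\gamma)/\gamma\) are \(A\)-module generators of \(C\). What you actually need is algebra generation, which follows from \(e_sz_{i_1}\cdots z_{i_t}=(e_sz_{i_1})z_{i_2}\cdots z_{i_t}\), exactly as in the paper.

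That module-generator property also lets the paper skip your Gröbner step for \(L\). Since \(A\) has no negative-degree part, the degree-zero \(\psi_i(\gamma)/\gamma\) already span \(L\) over \(k\), so linear algebra gives the basis.
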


\begin{proof}
By construction, \(C^{[k]}\) is a graded subring of \(C\). Since
\(A_{0}=k\), it is also a graded \(A\)-submodule of \(C\). The ring \(A\) is
Noetherian and
\(C\) is a finite graded \(A\)-module by the free presentation computed
above, so \(C^{[k]}\) is a finite \(A\)-module. The map \(A\to C\) carries
\(A_{0}=k\) into \(C^{[k]}_{0}\) and \(A_{d}\) into \(C_{d}=C^{[k]}_{d}\) for
\(d>0\), and hence factors through \(C^{[k]}\).
The rings \(C^{[k]}\) and \(C\) have the same part of positive degree, so
the two graded rings have the same \(\Proj\), and the factorization just
found shows this isomorphism is one over \(X\).

For the last assertion, recall that \(C=A[\psi_{1}(\gamma)/\gamma,\dots,\psi_{l}(\gamma)/\gamma]\).
The ring \(A\) is generated over \(A_{0}=k\) by homogeneous elements of
positive degree, so as a \(k\)-algebra \(C\) is generated by the images of
those together with the elements \(\psi_{i}(\gamma)/\gamma\). We sort this
list of generators by degree.

Those \(\psi_{i}(\gamma)/\gamma\) of degree zero lie in \(C_{0}=L\), and
they span \(L\) over \(k\): the \(\psi_{i}\) generate \(C\) as an \(A\)-module and
\(A\) has no part of negative degree, so
\(C_{0}=\sum_{\deg\psi_{i}=0}A_{0}\cdot\psi_{i}(\gamma)/\gamma\). Let
\(c_{1},\dots,c_{r}\) be the generators of positive degree, that is, the
images of the generators of \(A\) together with those
\(\psi_{i}(\gamma)/\gamma\) of positive degree. The generators of degree
zero having been absorbed into \(L\), we obtain
\[
C=L[c_{1},\dots,c_{r}].
\]

Since the \(\psi_{i}(\gamma)/\gamma\) of degree zero span \(L\) over
\(k\), using linear algebra over \(k\) we compute, from the
presentation of \(C\) obtained above, a basis
\(e_{1}=1,e_{2},\dots,e_{s}\) of \(L\) with \(e_{2},\dots,e_{s}\) among
them. Then
\[
C^{[k]}=k[e_{i}c_{j}\mid1\le i\le s,\,1\le j\le r].
\]
Indeed, every \(c_{j}\) has positive degree, so \(C^{[k]}\) is spanned over
\(k\) by \(1\) together with the elements \(e_{i}c_{j_{1}}\cdots c_{j_{t}}\)
with \(t\ge1\), and such an element is the product
\((e_{i}c_{j_{1}})(e_{1}c_{j_{2}})\cdots(e_{1}c_{j_{t}})\) of generators.
An explicit presentation of \(C^{[k]}\) is obtained by computing
the kernel of the \(k\)-algebra homomorphism
\[
k[z_{ij}\mid1\le i\le s,\,1\le j\le r]\to R_{\gamma},\quad
z_{ij}\mapsto e_{i}c_{j},
\]
as above.
\end{proof}

\subsection{Computing the graph of \(h\colon Y\to Z\)\label{subsec:Computing-graph}}

We have the following diagram, where the solid arrows are natural ones and the dashed arrow is the one induced by them:
\[
\begin{tikzcd}[row sep=2.5em, column sep=2.5em]
 & \Gamma_{f}\ar[dl,"\sim",sloped]\ar[d,dashed]\ar[dr] & \\
Y & Y\times Z\ar[l]\ar[r] & Z
\end{tikzcd}
\]
The middle vertical arrow is a closed immersion and its
image is \(\Gamma_{h}\).
This diagram of schemes induces the following diagram of ring homomorphisms:
\[
\begin{tikzcd}[row sep=2.5em, column sep=2.5em]
 & R_{\gamma} & \\
B\ar[r]\ar[ur] & B\otimes C^{[k]}\ar[u,"\Psi"] & C^{[k]}\ar[l]\ar[ul,"\iota"']
\end{tikzcd}
\]
Here \(B\) carries bidegrees \((*,0)\) and \(C^{[k]}\) bidegrees \((0,*)\), and
both slanted arrows preserve bidegrees, so \(\Psi\) does as well and
\(\ker\Psi\) is bihomogeneous. It is also prime, because \(R_{\gamma}\) is
a domain. Moreover, it does not contain the irrelevant ideal: that ideal
is generated by \(y\otimes c\) with \(y\in B\) and \(c\in C^{[k]}\)
homogeneous of positive degrees, whereas, for nonzero such \(y\) and \(c\),
the injectivity of the slanted arrows and the domain property give
\(\Psi(y\otimes c)=\bar y\,\iota(c)\ne0\).

The target \(R_{\gamma}\) is a ring of sections
over \(D_{+}(\gamma)\), so a bihomogeneous element
\(F\in B\otimes C^{[k]}\) lies in \(\ker\Psi\) exactly when the corresponding
section vanishes on the image of that open subset; as \(D_{+}(\gamma)\) is dense
in the integral variety \(\Gamma_{f}\), its image is dense in
\(\Gamma_{h}\) and that is the same as vanishing on all of \(\Gamma_{h}\).
Therefore, \(\ker\Psi\) is the bihomogeneous prime ideal defining
\(\Gamma_{h}\).

We thus obtain the following proposition:

\begin{prop}
\label{prop:Stein}
There is an algorithm, Algorithm \ref{algo:Stein},
that computes the Stein factorization of a graph morphism between
monograded varieties.
\end{prop}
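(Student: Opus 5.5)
The proof will assemble the constructions of Subsections \ref{subsec:Computing-homog-coord} and \ref{subsec:Computing-graph} into a single procedure and check that each step is effective. The input is a graph morphism \(f\colon Y\to X\) given by \(B\), \(A\) and the bihomogeneous prime \(\ft\). First, replace \(X\) by the image of \(f\). This image is the bi-to-mono projection of \(\Gamma_f\), so its ideal \(\ft|_{k[\bx]}\) is computed by elimination. Next, lift \(\ft\) to \(\widetilde{\ft}\subset S\). Compute the minimal bigraded free resolution of \(R=S/\widetilde{\ft}\) over \(S\), read off the shifts \(\overline{\ba}_{\bi}({}_SR)\), and choose \(\br\) satisfying the bound of Corollary \ref{cor:Hom2}. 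This step is effective because resolutions over a bigraded polynomial ring are computable via Gr\"obner bases. If any of the resolution modules vanish, their \(\overline{\ba}\) are \(-\infty\) and simply drop out of the maximum.

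Next, compute \(\Hom_R(R_{\ge\br},R)_{\ge\bzero}\) as a finitely presented bigraded module. The submodule \(R_{\ge\br}\) is generated by finitely many monomials, and Hom of finitely presented modules is a kernel computation. From this, extract the strand \(\Hom_R(R_{\ge\br},R)_{0,\ge0}\) together with its presentation as a graded \(A\)-module. This is done by the lifting procedure described before Lemma \ref{lem:iota}. It only requires presenting the finitely generated \(A\)-modules \(F_{0,\ge0}\) and \(G_{0,\ge0}\). Each is a finite direct sum of strands \(R(-\ba)_{0,\ge0}\), and each strand is finite over \(A\) because \(\pr_2\) is proper; concretely, one takes generators in bidegrees \((0,*)\) of each shifted summand. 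By Corollary \ref{cor:Hom2}, together with the stack identification, this strand is \(C\). Lemma \ref{lem:mor-Proj} then shows that \(\Proj C\to X\) is \(g\).

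Then pick a monomial \(\gamma\) in \(R_{\ge\br}\) that is not in \(\widetilde{\ft}\); such a monomial exists because \(\ft\) does not contain the irrelevant ideal. Form the map \(\iota\) of Lemma \ref{lem:iota} and compute the kernel of \(A[z_i]\to R_\gamma\). This yields \(C\) as an algebra. Pass to \(C^{[k]}\) by Lemma \ref{lem:section-ring-over-k}, which gives the monograded variety \(Z=\Proj C^{[k]}\). The homogeneous morphism \(g\) comes from \(A\to C^{[k]}\), and \(g\) is converted into a graph morphism as in Subsection \ref{subsec:Homogeneous-morphisms}. Finally, compute \(\ker\Psi\) for \(\Psi\colon B\otimes C^{[k]}\to R_\gamma\). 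By the argument just before the proposition, this kernel is the ideal of \(\Gamma_h\), so \(h\) is a graph morphism. The output consists of \(Z\), \(\Gamma_h\) and \(\Gamma_g\).

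I expect the main obstacle to be extracting the \((0,\ge0)\)-strand over \(A\) effectively, namely verifying that \(F_{0,\ge0}\) is finitely generated over \(A\) with computable generators. I would handle this summand by summand. A strand \(R(-\ba)_{0,\ge0}\) corresponds to the global sections of the twisted sheaves pushed forward by \(\pr_2\). Since the \(y\)-degree is fixed, there are only finitely many \(y\)-monomials in each relevant bidegree, and this gives a finite \(A\)-generating set; its relations then come from a kernel computation. The remaining steps are all standard Gr\"obner-basis operations: elimination, kernels of algebra maps and localization.
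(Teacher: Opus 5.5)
Your proposal is correct and is essentially the paper's own argument. The paper obtains the proposition by chaining exactly these steps: the bound \(\br\) of Corollary \ref{cor:Hom2} with \(M=N=R\), the lifted \(A\)-presentation of the \((0,\ge0)\)-strand identified with \(C\), the algebra structure through \(\iota\colon C\hookrightarrow R_{\gamma}\), the passage to \(C^{[k]}\), and \(\ker\Psi\) as the ideal of \(\Gamma_h\).

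The only loose point is your appeal to properness of \(\pr_2\) for the finiteness of \(R(-\ba)_{0,\ge0}\) over \(A\). The actual and sufficient reason is the elementary one you give afterwards: this strand is a quotient of \(B_{-a_1}\otimes A(-a_2)_{\ge0}\), where \(B_{-a_1}\) is finite-dimensional.
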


\begin{algorithm}[ht]
\raggedright
\caption{Computing the Stein factorization}
\label{algo:Stein}

\textbf{Input:} Monograded varieties \(Y=\Proj B,X=\Proj A\) and a
surjective graph morphism \(f\colon Y\to X\).

\textbf{Output:} A monograded variety \(Z\), a homogeneous morphism
\(g\colon Z\to X\) and a graph morphism \(h\colon Y\to Z\) such that
\(Y\xrightarrow{h}Z\xrightarrow{g}X\) is the Stein factorization of
\(f\).

\textbf{Procedure:}
\begin{enumerate}
\item Writing the graph \(\Gamma_{f}\) as \(\Gamma_{f}=\biProj R\), with \(R\)
a quotient ring of \(B\otimes A\), compute the graded ring
\(C=\Hom_{R}(R_{\ge\br},R)_{0,\ge0}\) together
with a homogeneous map \(A\to C\), and pass to the homogeneous coordinate
ring \(C^{[k]}=k\oplus C_{>0}\) of \(Z\) (Lemma
\ref{lem:section-ring-over-k}). Here \(\br\) is the pair of integers
in Corollary \ref{cor:Hom2} applied to the situation of Section \ref{subsec:Computing-homog-coord}.
\item Compute the kernel of
\[
\Psi\colon B\otimes C^{[k]}\to R_{\gamma},
\]
the map induced by \(B\to R\to R_{\gamma}\) and by \(\iota\) on the
presentation of \(C^{[k]}\) obtained in Step 1, and return
\(Z=\Proj C^{[k]}\), the homogeneous morphism \(g\colon Z\to X\) of Step 1
and the graph morphism \(h\colon Y\to Z\) given by
\(\Gamma_{h}=\biProj\bigl((B\otimes C^{[k]})/\ker\Psi\bigr)\).
\end{enumerate}
\end{algorithm}

\subsection{Finding contraction morphisms\label{subsec:finding-contractions}}

We now apply the Stein factorization algorithm to the morphism defined by
the nef threshold computed in Section \ref{sec:nefness-canonical}.

\begin{prop}
\label{prop:contraction-from-lambda}
Suppose that we are given a normal \(\QQ\)-Gorenstein monograded variety
\(X\) of dimension \(d\) with only log terminal singularities such that
\(K_X\) is not nef, an ample Cartier divisor \(H\) on \(X\), and a positive
integer \(a\) such that \(aK_X\) is Cartier. Put
\[
\lambda:=\inf\{t\in\RR_{\ge0}\mid K_X+tH\text{ is nef}\}
\quad\text{and}\quad
D:=K_X+\lambda H.
\]
Then we can
compute a positive integer \(M\) such that \(MD\) is Cartier and
base-point-free, the associated morphism \(\Phi_{|MD|}\) and its Stein factorization
\(\phi\colon X\to Z\), all as graph morphisms of monograded varieties.
Moreover \(\phi\) contracts at least one curve, and every curve \(C\)
contracted by \(\phi\) satisfies \(D\cdot C=0\) and \(K_{X}\cdot C<0\).
\end{prop}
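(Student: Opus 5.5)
First, Proposition \ref{prop:lambda-computable} (Algorithm \ref{algo:lambda}) computes \(\lambda\). It is rational, \(\lambda=p/q\) with positive integers \(p,q\), by Lemma \ref{lem:lambda}. Put \(N:=aq\). Then \(ND=q\cdot aK_X+apH\) is Cartier, and it is represented by a graded module. Indeed, a module for \(\omega_X^{[qa]}\) is computable as a reflexive power of \(\omega_R\), and by Remark \ref{rem:choice-ample-divisor} twisting by \(apH\) is just a shift of grading when \(H\) is the divisor of Lemma \ref{lem:ample-cartier}. For a general \(H\) one instead tensors with the invertible module of \(H\). Lemma \ref{lem:lambda}(2) says \(D\) is nef, so Proposition \ref{prop:nef-scaled} applies with \(t=\lambda\): the system \(|mN D|\) is base-point-free for the explicit \(m\) given there. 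We take \(M:=mN\). We do not even need to search, but we can confirm base point freeness with Lemma \ref{lem:basept-free}(1). Lemma \ref{lem:basept-free}(2) then gives \(\Phi_{|MD|}\colon X\to\PP^{n-1}\) as a graph morphism.

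Next we replace the target by the image. This is the scheme-theoretic image under the bi-to-mono projection \(\Gamma\to\PP^{n-1}\), computed by elimination as in Section \ref{subsec:bi-to-mono-projections}. The morphism \(X\to\Phi(X)\) is then a surjective graph morphism of monograded varieties. Proposition \ref{prop:Stein} (Algorithm \ref{algo:Stein}) produces the Stein factorization \(X\xrightarrow{\phi}Z\to\Phi(X)\), with \(\phi\) a graph morphism and \(Z\) monograded. This finishes the computational part.

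For the geometric assertions we use the fact that a curve \(C\) is contracted by \(\phi\) if and only if it is contracted by \(\Phi_{|MD|}\), since \(Z\to\Phi(X)\) is finite. This in turn holds if and only if \(MD\cdot C=0\), because \(MD\) is the pullback of a hyperplane class. Hence every contracted curve satisfies \(D\cdot C=0\). Then
\[
K_X\cdot C=D\cdot C-\lambda H\cdot C=-\lambda H\cdot C<0,
\]
using \(\lambda>0\) and the ampleness of \(H\).

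The main point to check is that at least one curve is contracted. Lemma \ref{lem:lambda}(2) says \(D\) is not ample. If \(\phi\) contracted no curve, then \(\Phi_{|MD|}\) would be finite, and \(MD\), the pullback of an ample bundle under a finite morphism, would be ample. That contradicts Lemma \ref{lem:lambda}(2). Equivalently, by Kleiman's criterion and the base-point-free property, non-ampleness of the semi-ample divisor \(MD\) forces some curve with \(MD\cdot C=0\). As stated in the introduction, all these numerical statements may be verified after base change to \(\overline{k}\). Any contracted curve over \(\overline{k}\) gives a contracted curve over \(k\) by taking the image under \(X_{\overline{k}}\to X\), and intersection numbers scale by a positive factor, so the sign conditions descend.
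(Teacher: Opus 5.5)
Your proposal is correct and follows essentially the same route as the paper: compute \(\lambda=p/q\), take \(M=m\cdot aq\) via Proposition \ref{prop:nef-scaled}, compute \(\Phi_{|MD|}\) and its Stein factorization, and deduce the curve statements from the finiteness of \(Z\to\PP^{n}\), the non-ampleness of \(D\), and \(K_X\cdot C=-\lambda H\cdot C<0\). Your two additions only make explicit what the paper leaves implicit: restricting to the image so that the surjectivity hypothesis of Algorithm \ref{algo:Stein} holds, and descending from \(\overline{k}\).
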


\begin{proof}
We compute \(\lambda\) by Proposition \ref{prop:lambda-computable} and
write \(\lambda=p/q\) with positive integers \(p\) and \(q\). Then \(N:=aq\)
satisfies that \(ND\) is Cartier, and \(M:=mN\) with \(m\) as in Proposition
\ref{prop:nef-scaled} is as desired: the divisor \(D\) is nef by Lemma
\ref{lem:lambda}(2), so \(|MD|\) is base-point-free by Proposition
\ref{prop:nef-scaled}. We can therefore compute \(\Phi_{|MD|}\colon X \to \PP^n\) by Lemma
\ref{lem:basept-free} and its Stein factorization \(\phi\colon X\to Z\) by
Proposition \ref{prop:Stein}.

Since \(Z\to\PP^{n}\) is finite, a curve of \(X\) is contracted by \(\phi\) if
and only if it is contracted by \(\Phi_{|MD|}\). As \(D\) is not ample by
Lemma \ref{lem:lambda}(2) but \(MD\) is base-point-free,
\(\Phi_{|MD|}\), and hence \(\phi\), contracts at least one curve; moreover,
a curve \(C\) is contracted if and only if \(D\cdot C=0\). For such a curve
we have
\(K_{X}\cdot C=-\lambda H\cdot C<0\), because \(\lambda>0\) and \(H\) is ample.
\end{proof}

\begin{rem}
Let \(X\), \(H\), \(\lambda\), \(D=K_{X}+\lambda H\) and \(\phi\colon X\to Z\) be
as in Proposition \ref{prop:contraction-from-lambda} and put
\[
F:=\{z\in\NE(X)\mid D\cdot z=0\},
\]
where \(\NE(X)\) is the closed cone of curves, that is, the Mori
cone. Then \(F\) is a nonzero \(K_X\)-negative extremal face, and
\(\phi\) contracts exactly the curves whose classes lie in \(F\).
Moreover, \(\phi_*\cO_X=\cO_Z\) and \(-K_X\) is \(\phi\)-ample, so
\(\phi\) is the contraction of \(F\) in the sense of
\cite{kollar2021relative}. Here no \(\QQ\)-factoriality is assumed, and
\(F\) need not be a ray. Consequently, the preceding constructions give
Algorithm \ref{algo:find-contraction} for finding such a contraction.
\end{rem}

\begin{algorithm}[ht]
\raggedright
\caption{Finding a contraction morphism}
\label{algo:find-contraction}

\textbf{Input:} A normal \(\QQ\)-Gorenstein log terminal monograded
variety \(X\) such that \(K_{X}\) is not nef, and a positive integer \(a\)
such that \(aK_{X}\) is Cartier.

\textbf{Output:} A graph morphism \(\phi\colon X\to Z\) of normal
monograded varieties that is the contraction of a \(K_{X}\)-negative
extremal face of the Mori cone \(\NE(X)\).

\textbf{Procedure:}
\begin{enumerate}
\item Compute an ample Cartier divisor \(H\) on \(X\) (Lemma
\ref{lem:ample-cartier}).
\item Compute \(\lambda=\inf\{t\in\RR_{\ge0}\mid K_{X}+tH\text{ is nef}\}\)
(Algorithm \ref{algo:lambda}) and put \(D:=K_{X}+\lambda H\).
\item Compute a positive integer \(M\) such that \(MD\) is Cartier and
base-point-free, and the morphism \(\Phi_{|MD|}\colon X\to\PP^{n}\)
(Proposition \ref{prop:contraction-from-lambda}).
\item Compute the Stein factorization \(X\xrightarrow{\phi}Z\to\PP^{n}\)
of \(\Phi_{|MD|}\) (Algorithm \ref{algo:Stein}) and return
\(\phi\colon X\to Z\).
\end{enumerate}
\end{algorithm}

\section{Relative canonical models and the minimal model program\label{sec:relative-canonical-models}}

In this section, we provide an algorithm for computing relative canonical
models. Combining it with algorithms obtained in earlier sections, we obtain
an algorithm for the minimal model program in dimension three.

\subsection{Relative canonical models via Rees-algebra candidates}
\label{subsec:canonical-model-candidates}

Let \(Y\) be a normal projective variety with only log terminal
singularities and let \(f\colon Y\to X\) be a projective birational
morphism with \(f_{*}\cO_{Y}=\cO_{X}\) such that \(-K_{Y}\) is \(f\)-ample.
Recall that a normal variety \(V\) is said to be \emph{of klt type} if there
is an effective \(\QQ\)-divisor \(\Delta\) such that \(K_V+\Delta\) is \(\QQ\)-Cartier and \((V,\Delta)\) is klt.

\begin{lem}
\label{lem:target-klt-type}
The variety \(X\) is of klt type.
\end{lem}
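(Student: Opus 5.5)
The plan is the standard argument that the image of a klt variety under a $K$-negative contraction is of klt type: push forward a general member of a relatively ample linear system. Since \(-K_Y\) is \(f\)-ample and \(X\) is projective, choose an ample Cartier divisor \(A\) on \(X\). For a sufficiently large and divisible integer \(n\), the \(\QQ\)-Cartier divisor \(-nK_Y+f^{*}(nA')\) is Cartier and very ample on \(Y\) for a suitable ample \(A'\) on \(X\). Here we use that \(-K_Y\) is \(f\)-ample and \(Y\) is \(\QQ\)-Gorenstein, since log terminal includes \(\QQ\)-Cartier \(K_Y\). Equivalently, \(-K_Y+f^{*}A'\) is an ample \(\QQ\)-divisor on \(Y\).

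Take a general member \(G\in|{-nK_Y}+nf^{*}A'|\) and put \(\Delta_Y:=\tfrac1n G\), so that \(K_Y+\Delta_Y\sim_{\QQ}f^{*}A'\). By Bertini, applied to a log resolution of \(Y\) and the free linear system, for \(n\) large the pair \((Y,\Delta_Y)\) is klt. Indeed \(Y\) itself is klt, and adding \(\tfrac1n\) of a general member of a base-point-free system keeps all discrepancies \(>-1\). Base-point-freeness holds because the system is very ample.

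Now set \(\Delta:=f_{*}\Delta_Y\), an effective \(\QQ\)-divisor on \(X\). Since \(f\) is birational with \(f_{*}\cO_Y=\cO_X\), we have \(f_{*}K_Y=K_X\) as Weil divisor classes, so \(K_X+\Delta=f_{*}(K_Y+\Delta_Y)\sim_{\QQ}f_{*}f^{*}A'=A'\), which is \(\QQ\)-Cartier. Moreover \(K_Y+\Delta_Y\sim_{\QQ}f^{*}(K_X+\Delta)\). To see this, note that the difference \(K_Y+\Delta_Y-f^{*}(K_X+\Delta)\) is \(\QQ\)-linearly equivalent to a divisor \(E\) that is \(f\)-exceptional and \(f\)-numerically trivial, so \(E=0\) by the negativity lemma. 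More precisely, one chooses the representatives via the \(\QQ\)-linear equivalence \(K_Y+\Delta_Y\sim_\QQ f^*A'\) and pushes it forward, which gives the crepant relation \(K_Y+\Delta_Y=f^{*}(K_X+\Delta)\) for compatible choices. Hence the discrepancies of \((X,\Delta)\) coincide with those of \((Y,\Delta_Y)\) on every divisor over \(X\), because a common log resolution computes both. Therefore \((X,\Delta)\) is klt, and \(X\) is of klt type.

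The main obstacle is bookkeeping the \(\QQ\)-linear equivalences so that the crepant pullback identity holds on the nose rather than only up to \(\QQ\)-linear equivalence. This is handled by writing \(nK_Y+G=\mathrm{div}(\varphi)+nf^{*}A'\) with a rational function \(\varphi\), and observing that \(\varphi\) is also a rational function on \(X\) since \(f\) is birational. With \(A'\) chosen effective and general on \(X\), one sets \(\Delta:=\tfrac1n f_{*}G\); then \(n(K_X+\Delta)=\mathrm{div}(\varphi)+nA'\) is Cartier up to the integer \(n\), and its pullback equals \(n(K_Y+\Delta_Y)\). Effectivity of \(\Delta\) and the need that \(f\) be birational, rather than merely a contraction, are the points to check. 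As the paper notes, the statement may be verified after base change to \(\CC\).
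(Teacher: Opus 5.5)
Your proof is correct, but it does not follow the paper's route.

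The paper argues by citation. Since \((Y,0)\) is klt and \(-K_Y\) is \(f\)-ample, \(Y\) is of Fano type over \(X\) in the sense of Prokhorov--Shokurov. Their Lemma~2.8(i), applied to the birational contraction \(f\) over the base \(X\), then shows that \(X\) is of Fano type over itself, hence of klt type.

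You instead build the boundary by hand. A general \(G\in|n(-K_Y+f^{*}A')|\) gives a klt pair \((Y,\tfrac1nG)\) by Bertini. Writing \(nK_Y+G=\operatorname{div}(\psi)+nf^{*}A'\) and pushing forward along the birational \(f\) gives \(n(K_X+\Delta)=\operatorname{div}_X(\psi)+nA'\). This is Cartier and pulls back to \(nK_Y+G\) exactly, so the discrepancies of \((X,\Delta)\) and \((Y,\Delta_Y)\) agree. This is essentially the mechanism behind the cited lemma, specialized to a morphism over \(X\) itself.

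What each route buys: yours is self-contained and produces an explicit boundary \(\Delta=\tfrac1nf_{*}G\), with the bonus that \(K_X+\Delta\sim_{\QQ}A'\) is ample. The paper's is shorter and places the statement in the Fano-type framework.

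One small point about your third paragraph. Once you have the identity via \(\psi\), the negativity-lemma sentence is superfluous. As phrased it is also slightly off: the negativity lemma needs the difference \(K_Y+\Delta_Y-f^{*}(K_X+\Delta)\) itself to be \(f\)-exceptional and \(f\)-numerically trivial, not something \(\QQ\)-linearly equivalent to it. The difference does have both properties, so the conclusion stands.
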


\begin{proof}
The pair \((Y,0)\) is klt and \(-K_{Y}\) is \(f\)-ample, so \(Y\) is of Fano
type over \(X\) in the sense of
\cite[Lemma--Definition~2.6(i)]{prokhorov2009towards}, the relative notion
being the one introduced in the sentence that follows that
lemma-definition. By \cite[Lemma~2.8(i)]{prokhorov2009towards}, applied
over the base \(X\) and in the birational case, \(X\) is then of Fano type
over itself. In particular, there is an effective \(\QQ\)-divisor
\(\Delta_{X}\) such that \((X,\Delta_{X})\) is klt, so \(X\) is of klt type.
\end{proof}

\begin{lem}
\label{lem:self-relative-canonical-model}
Let \(V\) be a normal projective variety of klt type. Then the canonical
algebra
\[
\bigoplus_{n\ge0}\cO_{V}(nK_{V})
\]
is a finitely generated \(\cO_{V}\)-algebra, and the variety
\[
V^{\mathrm{c}}:=\cProj_{V}\bigoplus_{n\ge0}\cO_{V}(nK_{V})
\]
is normal and \(\QQ\)-Gorenstein and has only log terminal
singularities. The natural morphism \(g\colon V^{\mathrm{c}}\to V\)
is small, and \(K_{V^{\mathrm{c}}}\) is \(g\)-ample. Moreover \(g\)
is the only normal small birational modification of \(V\) such that
the strict transform of \(K_{V}\) becomes \(\QQ\)-Cartier and
relatively ample.
\end{lem}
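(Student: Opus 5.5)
The plan is to reduce everything to the relative MMP of Birkar--Cascini--Hacon--McKernan on a small \(\QQ\)-factorial modification of \(V\), and then to read off the stated properties from reflexivity under small maps. Fix an effective \(\QQ\)-divisor \(\Delta\) with \((V,\Delta)\) klt. By BCHM there is a small projective birational morphism \(h\colon V'\to V\) with \(V'\) \(\QQ\)-factorial. Let \(\Delta'\) be the strict transform of \(\Delta\). Since \(h\) is small, \(K_{V'}+\Delta'=h^{*}(K_V+\Delta)\), so \((V',\Delta')\) is klt. Because \(V'\) is \(\QQ\)-factorial, \(K_{V'}\) is \(\QQ\)-Cartier, and removing the effective divisor \(\Delta'\) only raises discrepancies. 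Hence \((V',0)\) is klt, that is, \(V'\) has only log terminal singularities.

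Since \(h\) is birational, \(K_{V'}\) is \(h\)-big. Running a \(K_{V'}\)-MMP over \(V\) with scaling therefore terminates (BCHM) in a model \(V''\to V\) on which \(K_{V''}\) is nef over \(V\). By the relative base point free theorem for the klt variety \(V''\), with \(K_{V''}\) relatively big, \(K_{V''}\) is semi-ample over \(V\), and its relative ample model \(V^{\mathrm c}\to V\) exists. The steps are:
\begin{enumerate}
\item The relative canonical ring \(\bigoplus_n h_*\cO_{V'}(nK_{V'})\) is a finitely generated \(\cO_V\)-algebra. Because \(h\) is small, \(h_*\cO_{V'}(nK_{V'})\) is reflexive and agrees with \(\cO_V(nK_V)\) off a codimension-two set, so it equals \(\cO_V(nK_V)\). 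This gives finite generation of the canonical algebra and identifies its relative \(\cProj\) with the relative canonical model.
\item The morphism \(V''\to V^{\mathrm c}\) is \(K\)-trivial, hence crepant. Log terminality therefore passes from \(V''\) to \(V^{\mathrm c}\), and \(K_{V^{\mathrm c}}\) is \(\QQ\)-Cartier and ample over \(V\).
\item No step of the MMP over \(V\), nor the final contraction, can contract a divisor. Such a divisor would be exceptional over \(V\), but \(V'\to V\) is small and every birational map \(V'\dashrightarrow V''\) of the MMP only contracts divisors exceptional over the base. Hence \(g\colon V^{\mathrm c}\to V\) is small.
\end{enumerate}

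For uniqueness, let \(g'\colon W\to V\) be another normal small birational model with \(K_W\) \(\QQ\)-Cartier and \(g'\)-ample. Relative ampleness gives
\[
W=\cProj_V\bigoplus_{n\ge0}g'_*\cO_W(nK_W).
\]
Smallness of \(g'\) again gives \(g'_*\cO_W(nK_W)=\cO_V(nK_V)\), since both sides are reflexive and agree in codimension one. So \(W\cong V^{\mathrm c}\) over \(V\).

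I expect the main obstacle to be the bookkeeping in steps 1 and 3. One must check that the relative MMP over \(V\) never extracts or contracts divisors. One must also check that the pushforwards of \(\cO(nK)\) under the small maps, including the rational map \(V'\dashrightarrow V^{\mathrm c}\), really coincide with the reflexive sheaves \(\cO_V(nK_V)\). The key fact is that all the varieties involved are isomorphic to \(V\) in codimension one over \(V\), so the divisorial sheaves agree on a big open set and hence everywhere. In dimension three the existence and termination inputs can alternatively be taken from the classical threefold MMP, but citing BCHM handles arbitrary dimension uniformly.
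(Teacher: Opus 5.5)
Your argument is correct, but it is not how the paper proceeds. The paper gives no direct proof. It applies Zhuang's lemma (Lemma~2.5 of \cite{zhuang2024direct}, proved as Lemma~4.7 of \cite{zhuang2024boundedness}) to the Weil divisor $K_V$ and takes finite generation from its statement. It deduces normality and log terminality from the fact that the construction in that proof realizes $V^{\mathrm{c}}$ as the log canonical model of a klt pair. Uniqueness is read off from an assertion made inside that proof about $\cProj_V\bigoplus_n\cO_V(nrK_V)$ for sufficiently divisible $r$.

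You instead reprove the special case $D=K_V$ from BCHM: a small $\QQ$-factorial modification, a $K_{V'}$-MMP over $V$, relative base point freeness, and the ample model. The specialization is what keeps your route clean. Since $V'$ is itself klt with zero boundary, you can run the $K_{V'}$-MMP directly, and log terminality of $V^{\mathrm{c}}$ comes from crepancy of $V''\to V^{\mathrm{c}}$. Smallness and uniqueness are then proved explicitly by the no-exceptional-divisor and reflexivity arguments, rather than extracted from the internals of someone else's proof. The citation buys brevity and the statement for an arbitrary Weil divisor; your version buys a self-contained and transparent argument.

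Two steps you leave implicit are standard and follow from what you built. Finite generation in Step~1 holds because $\cO_V(nK_V)=g_*\cO_{V^{\mathrm{c}}}(nK_{V^{\mathrm{c}}})$ by smallness of $g$, and the latter algebra is finitely generated since $K_{V^{\mathrm{c}}}$ is $\QQ$-Cartier and $g$-ample. Reflexivity of $g'_*\cO_W(nK_W)$ holds because sections extend across the codimension-two complement of the preimage of the locus where $g'$ is an isomorphism, $W$ being normal.
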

\begin{proof}
Apply \cite[Lemma~2.5]{zhuang2024direct}, proved in
\cite[Lemma~4.7]{zhuang2024boundedness}, to the Weil divisor \(K_{V}\).
It gives the finite generation of \(\bigoplus_{n\ge0}\cO_{V}(nK_{V})\).
The construction in the proof of the cited lemma realizes
\(V^{\mathrm{c}}\) as the log canonical model of a klt pair, so
\(V^{\mathrm{c}}\) is normal and has only log terminal
singularities.
In the proof of \cite[Lemma~4.7]{zhuang2024boundedness} it is shown that
for a sufficiently divisible \(r\) the morphism
\(\cProj_{V}\bigoplus_{n\ge0}\cO_{V}(nrK_{V})\to V\) is the unique normal small
birational modification on which the strict transform of \(K_{V}\) is
\(\QQ\)-Cartier and relatively ample. This morphism is canonically
isomorphic to \(g\).
\end{proof}

\begin{defn}
\label{defn:relative-canonical-model}
Let \(f\colon Y\to X\) be a projective morphism of normal varieties, and
write \(\cO_Y(nK_Y)\) for the reflexive divisorial sheaf associated with
\(nK_Y\). If the relative canonical algebra
\[
\bigoplus_{n\ge0}f_*\cO_Y(nK_Y)
\]
is finitely generated as an \(\cO_X\)-algebra, the structural morphism
\[
\cProj_X\bigoplus_{n\ge0}f_*\cO_Y(nK_Y)\to X
\]
is called the \emph{relative canonical model of \(Y\) over \(X\)}.
\end{defn}

In particular, Lemma \ref{lem:self-relative-canonical-model} says
that the relative canonical model of \(V\) over itself exists when
\(V\) is of klt type. It is the small morphism \(V^{\mathrm{c}}\to
V\) constructed there.

\begin{lem}
\label{lem:canonical-algebras-agree}
Let \(Y\) be a normal projective variety with only log terminal
singularities, and let \(f\colon Y\to X\) be a projective birational
morphism onto a normal projective variety such that
\(f_*\cO_Y=\cO_X\) and \(-K_Y\) is \(f\)-ample. Then the relative
canonical models of \(Y\) over \(X\) and of \(X\) over itself coincide.
More precisely, put
\[
\cA_X:=\bigoplus_{n\ge0}\cO_X(nK_X)
\quad\text{and}\quad
g\colon Z:=\cProj_X\cA_X\to X.
\]
Then for a sufficiently divisible positive integer \(r\) and every
\(n\ge0\),
\begin{equation}
\label{eq:source-target-canonical-algebras}
f_*\cO_Y(nrK_Y)=g_*\cO_Z(nrK_Z)=\cO_X(nrK_X).
\end{equation}
\end{lem}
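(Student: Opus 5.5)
The plan is to prove the two equalities in \eqref{eq:source-target-canonical-algebras} separately, regarding all the sheaves involved as subsheaves of the constant sheaf of rational functions \(k(X)=k(Y)=k(Z)\). By Lemma \ref{lem:target-klt-type}, \(X\) is of klt type. Lemma \ref{lem:self-relative-canonical-model} then tells us that \(g\) is small, that \(Z\) is \(\QQ\)-Gorenstein and log terminal, and that \(K_Z\) is \(g\)-ample. I choose \(r\) so that both \(rK_Y\) and \(rK_Z\) are Cartier.

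\textbf{Step 1: \(g_*\cO_Z(mK_Z)=\cO_X(mK_X)\) for every \(m\).} This uses only that \(g\) is small and birational. Let \(U\subset X\) be open and let \(\phi\in k(X)\). Then \(\phi\) is a section of \(\cO_Z(mK_Z)\) over \(g^{-1}U\) if and only if \(\operatorname{div}\phi+mK_Z\ge0\) on \(g^{-1}U\). This condition only involves prime divisors of \(g^{-1}U\). Since \(g\) is small, these are exactly the strict transforms of prime divisors of \(U\), and \(K_Z\) is the strict transform of \(K_X\). So the condition is the same as \(\operatorname{div}\phi+mK_X\ge0\) on \(U\).

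\textbf{Step 2: the inclusion \(f_*\cO_Y(mK_Y)\subseteq\cO_X(mK_X)\).} The divisors exceptional for \(f\) map to subsets of codimension \(\ge2\), and \(f_*K_Y=K_X\). Hence a rational function satisfying \(\operatorname{div}\phi+mK_Y\ge0\) on \(f^{-1}U\) satisfies \(\operatorname{div}\phi+mK_X\ge0\) on \(U\). Reflexivity of \(\cO_X(mK_X)\) is used here to ignore codimension-two subsets.

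\textbf{Step 3: the reverse inclusion.} This is the main point. I take a common resolution \(p\colon W\to Y\), \(q\colon W\to Z\) over \(X\) and define \(E\) by
\[
p^*K_Y=q^*K_Z+E,
\]
a \(\QQ\)-divisor on \(W\).

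First I claim \(q_*E=0\). Since \(Z\to X\) is small, every prime divisor of \(Z\) is the strict transform of a prime divisor of \(X\). Its coefficient in \(q_*p^*K_Y\) equals its coefficient in \(K_X\), hence in \(K_Z\).

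Next, relative to \(Z\) we have \(-E\equiv_Z -p^*K_Y\). This divisor is nef over \(X\) because \(-K_Y\) is \(f\)-ample, and therefore it is \(q\)-nef.

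The negativity lemma, applied to the \(q\)-nef divisor \(-E\) with \(q_*(-E)=0\), then gives \(E\ge0\).

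Now let \(\phi\) be a section of \(\cO_X(nrK_X)\) over \(U\). By Step 1, \(\operatorname{div}\phi+nrK_Z\ge0\) on \(g^{-1}U\). Since \(nrK_Z\) is Cartier, pulling back gives \(\operatorname{div}\phi+q^*nrK_Z\ge0\). Because \(E\ge0\), this yields \(\operatorname{div}\phi+p^*nrK_Y\ge0\) on the preimage of \(U\) in \(W\). Pushing forward by \(p\), we get \(\operatorname{div}\phi+nrK_Y\ge0\) on \(f^{-1}U\). Thus \(\phi\in f_*\cO_Y(nrK_Y)(U)\).

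Together with Step 2 this proves \eqref{eq:source-target-canonical-algebras}. Taking \(\cProj\) of the resulting equality of Veronese subalgebras identifies the two relative canonical models.

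The step I expect to need the most care is Step 3. The delicate points are verifying \(q_*E=0\), which rests on smallness of \(g\) and on \(Z\) having no divisors exceptional over \(X\), and checking that the hypotheses of the negativity lemma hold in the relative setting over \(Z\). This may require passing to \(\overline{k}\) as permitted in the introduction.
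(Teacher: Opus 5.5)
Your proof is correct and follows the paper's argument: a common resolution, the negativity lemma applied to \(E=p^*K_Y-q^*K_Z\) (with \(E\) being \(q\)-exceptional and \(-E\) being \(q\)-nef) to obtain \(E\ge0\), and then a comparison of pushforwards. You carry out that last comparison with divisor inequalities for rational functions rather than the paper's projection formula with \(q_*\cO_W(nrE)=\cO_Z\). You also spell out the smallness argument giving \(g_*\cO_Z(nrK_Z)=\cO_X(nrK_X)\), which the paper leaves implicit.
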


\begin{proof}
The algebra \(\cA_X\) is finitely generated by Lemmas
\ref{lem:target-klt-type} and \ref{lem:self-relative-canonical-model}.
Let \(p\colon W\to Y\) and \(q\colon W\to Z\) be a common resolution,
choose the canonical divisors compatibly over \(X\), and set
\[
E:=p^*K_Y-q^*K_Z.
\]
The divisor \(E\) is \(q\)-exceptional. Moreover, \(-E\) is \(q\)-nef
because \(-K_Y\) is \(f\)-ample. The negativity lemma
\cite[Lemma~3.39]{kollar1998birational} therefore gives \(E\ge0\). Hence,
for a sufficiently divisible positive integer \(r\) and every \(n\ge0\),
the projection formula gives \eqref{eq:source-target-canonical-algebras};
here \(q_*\cO_W(nrE)=\cO_Z\) because \(nrE\) is effective and
\(q\)-exceptional. The identifications are compatible with multiplication,
so taking relative \(\Proj\) proves the assertion about the relative
canonical models.
\end{proof}

\begin{rem}
\label{rem:tests-miss-qfactoriality}
When \(f\) is small it is a flipping contraction in the sense of
\cite[Definition~4.8.2]{fujino2017foundations}, and \(g\) is its flip by
\cite[Lemma~4.8.3]{fujino2017foundations}. Neither of the two makes any
hypothesis on the relative Picard number.
\end{rem}

\begin{lem}
\label{lem:stack-Rees-candidate}
Let \(X=\Proj R\) be a normal monograded variety of klt type, and let
\(I\subset R\) be a homogeneous ideal with
\(I\cong\omega_R(-d)\) as graded \(R\)-modules, let \(m>0\), and let
\[
\pi^m\colon W^{(m)}:=\biProj R[I^{(m)}t]\to X
\]
be the bi-to-mono projection computed from the Rees algebra of the
\(m\)-th symbolic power \(I^{(m)}\) of \(I\). If \(\pi^m\) is small
and \(W^{(m)}\) satisfies \(S_2\), then \(\pi^m\) is the relative
canonical model of \(X\) over itself.
\end{lem}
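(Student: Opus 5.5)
The plan is to reduce to the uniqueness statement in Lemma \ref{lem:self-relative-canonical-model}. By that lemma, the relative canonical model \(g\colon X^{\mathrm{c}}\to X\) is the only normal small birational modification of \(X\) on which the strict transform of \(K_X\) becomes \(\QQ\)-Cartier and relatively ample. So it suffices to show three things about \(\pi^m\colon W^{(m)}\to X\): it is birational, \(W^{(m)}\) is normal, and the strict transform of \(K_X\) on \(W^{(m)}\) is \(\QQ\)-Cartier and \(\pi^m\)-ample. Smallness is given by hypothesis.

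\textbf{Step 1: \(W^{(m)}\) is a blowup.} First I would identify \(W^{(m)}\) with the blowup of \(X\) along the coherent ideal sheaf \(\widetilde{I^{(m)}}\subset\cO_X\). Concretely, the charts of \(\biProj R[I^{(m)}t]\) over \(D_+(\alpha)\subset X\), with \(\alpha\in R_l\) as in the proof of Lemma \ref{lem:mor-Proj}, are the affine blowup charts of the localized ideal \(I^{(m)}_{(\alpha)}\).

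I expect this identification to be the main technical obstacle. The Rees algebra is only bigraded, and \(R\) need not be standard graded. I would handle this by passing to Veronese subrings in the first degree, which does not change \(\biProj\). On these subrings the dehomogenization at \(\alpha\) commutes with forming the Rees algebra, and I would also check that it commutes with symbolic powers. For the latter, \(\widetilde{I^{(m)}}\) is the reflexive hull of \(\widetilde I^{\,m}\), because symbolic powers of a height-one divisorial ideal of a normal domain localize well.

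Since \(R\) is a domain and \(I\neq0\), the Rees algebra is a domain. Hence \(W^{(m)}\) is integral, and \(\pi^m\) is projective and birational.

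\textbf{Step 2: the divisor.} Since \(I\cong\omega_R(-d)\) is a rank-one reflexive ideal, \(\widetilde I=\cO_X(-D)\) for an effective Weil divisor \(D\) with \(-D\sim K_X-dH_0\). Here \(H_0\) is the Weil divisor class of \(\cO_X(1)\), and \(lH_0\) is the Cartier divisor of Lemma \ref{lem:ample-cartier}. It follows that \(\widetilde{I^{(m)}}=\cO_X(-mD)\).

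\textbf{Step 3: normality.} \(W^{(m)}\) is \(S_2\) by hypothesis, so by Serre's criterion it remains to check \(R_1\). Because \(\pi^m\) is small and birational, it is an isomorphism over an open set of \(X\) whose complement has codimension \(\ge2\), and its exceptional locus in \(W^{(m)}\) has codimension \(\ge2\). Every codimension-one point of \(W^{(m)}\) therefore maps isomorphically to a codimension-one point of \(X\). There \(X\) is regular, since a normal variety is \(R_1\). So \(W^{(m)}\) is normal.

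\textbf{Step 4: the strict transform of \(K_X\).} On a blowup the inverse image ideal \(\cO_X(-mD)\cdot\cO_{W^{(m)}}\) is invertible; it is \(\cO_{W^{(m)}}(1)\) in the \(t\)-direction and is \(\pi^m\)-ample. It is the ideal of an effective Cartier divisor \(E\).

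At a codimension-one point of \(X\), the sheaf \(\cO_X(-mD)\) is already invertible, so blowing it up changes nothing there. Since \(\pi^m\) is small and \(W^{(m)}\) is normal, Weil divisors on \(W^{(m)}\) are determined in codimension one. Therefore \(E=mD_W\), where \(D_W\) is the strict transform of \(D\).

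Consequently \(mD_W\) is Cartier and \(-mD_W\) is \(\pi^m\)-ample. The strict transform satisfies \(K_{W^{(m)}}\sim -D_W+d\,\pi^{m*}H_0\), where \(l\pi^{m*}H_0\) is a pulled-back Cartier divisor and is \(\pi^m\)-trivial. Hence \(lmK_{W^{(m)}}\) is Cartier and \(K_{W^{(m)}}\) is \(\pi^m\)-ample. Lemma \ref{lem:self-relative-canonical-model} then identifies \(\pi^m\) with \(g\), which is the relative canonical model of \(X\) over itself.
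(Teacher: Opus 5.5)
Your Step 3 and the final appeal to the uniqueness clause of Lemma \ref{lem:self-relative-canonical-model} are what the paper does. The gap is in Step 1, and Step 4 rests on it: the identification of \(W^{(m)}\) with the blow-up of \(X\) along \(\widetilde{I^{(m)}}\) fails in general when \(R\) is not generated in degree one.

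The preimage of \(D_+(\alpha)\) in \(\biProj R[I^{(m)}t]\) is \(\Proj\) of \(\bigoplus_{n}\bigl(((I^{(m)})^{n})_{\alpha}\bigr)_{0}\). The blow-up of \(\widetilde{I^{(m)}}\) over \(D_+(\alpha)\) is instead \(\Proj\) of \(\bigoplus_{n}\bigl((I^{(m)}_{\alpha})_{0}\bigr)^{n}\). When \(R_\alpha\) has no unit of degree one, the degree-zero part of a localized power can be strictly larger than the corresponding power of the degree-zero part. Passing to the first-degree Veronese does not fix this. It produces \(\bigoplus_n((I^{(m)})^n)^{(l)}t^n\), which in general is not the Rees algebra of \((I^{(m)})^{(l)}\) over \(R^{(l)}\).

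Here is a counterexample satisfying all hypotheses of the lemma. Take \(R=k[x,y,z]\) with weights \((1,1,2)\), so \(X\) is the quadric cone and \(\omega_R=R(-4)\); put \(I=xR\cong\omega_R(3)\) and \(m=1\). Since \(I\) is principal, \(\biProj R[It]=X\) and \(\pi^1\) is the identity. But on \(D_+(z)\) the sheaf \(\widetilde I\) is \((x^2/z,\,xy/z)\), the ideal of a ruling \(\ell\) through the vertex. It is not invertible, and its blow-up is the minimal resolution. Correspondingly, your Step 4 would show that \(mD_W=\ell\) is Cartier, which is false; only \(2\ell\) is.

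What is true is that \(W^{(m)}\) is the coarse moduli space of the blow-up \(q^m\colon\mathcal W^{(m)}\to\mathcal X\) of the quotient stack \(\mathcal X=[(\Spec R\setminus V(R_\dagger))/\GG_m]\) along the equivariant sheafification \(\mathcal J_m\) of \(I^{(m)}\). The exceptional ideal is invertible and relatively ample on \(\mathcal W^{(m)}\), not on \(W^{(m)}\). The paper proceeds as follows:
\begin{enumerate}
\item It twists \(\cO_{\mathcal W^{(m)}}(1)\) by \((q^m)^*\cO_{\mathcal X}(md)\), which is a line bundle on the stack for every \(m\). The result corresponds to \(mK_X\) over the locus where \(\pi^m\) is an isomorphism.
\item It descends a power of this bundle, trivializing the stabilizer actions, to a \(\pi^m\)-ample line bundle \(L_m\) on \(W^{(m)}\).
\item It identifies \(L_m\) with \(\cO_{W^{(m)}}(NmK_{W^{(m)}})\) over a big open set where \(\pi^m\) is an isomorphism and \(K_X\) is Cartier, and extends the isomorphism by reflexivity on the normal variety \(W^{(m)}\).
\end{enumerate}
The extra factor \(N\) is unavoidable, as the example shows, but harmless, since only \(\QQ\)-Cartierness is needed.

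If you prefer to stay on coarse spaces, you can repair Step 1 as follows. Use \(W^{(m)}=\cProj_X\bigoplus_n\widetilde{(I^{(m)})^n}\); for \(N\) divisible enough, this is the blow-up of \(X\) along \(\widetilde{(I^{(m)})^N}\). Then compare that ideal with \(\cO_X(-NmD)\) at codimension-one points of \(X\). There, provided \(\mathcal X\to X\) is an isomorphism in codimension one, degree-zero localization does commute with taking powers.
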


\begin{proof}
Put
\[
\mathcal X=
[(\Spec R\setminus V(R_\dagger))/\GG_m]
\]
and let \(p\colon\mathcal X\to X\) be its coarse moduli map.  Denote by
\(\mathcal J_m\) the ideal on \(\mathcal X\) obtained by equivariant
sheafification of \(I^{(m)}\).  Equivariant sheafification commutes with
localization and with the double dual of a torsion-free module.  Hence
the graded isomorphism \(I\cong\omega_R(-d)\) gives
\begin{equation}\label{eq:stack-symbolic-canonical}
\mathcal J_m
\cong\mathcal K_m\otimes\cO_{\mathcal X}(-md),
\end{equation}
where \(\mathcal K_m\) denotes the reflexive sheaf on \(\mathcal X\)
corresponding to \(\cO_X(mK_X)\) in codimension one. In particular the
second factor in \eqref{eq:stack-symbolic-canonical} is a line bundle,
without any divisibility condition on \(m\).

Consider the stack blow-up
\[
q^m\colon\mathcal W^{(m)}
:=\cProj_{\mathcal X}\bigoplus_{n\ge0}\mathcal J_m^n
\to\mathcal X.
\]
Writing the construction on the quotient charts shows that its coarse
moduli space is \(W^{(m)}\): on each selected Rees chart the coarse
coordinate ring is the bidegree-zero part of the corresponding
localization, which is precisely the chart used in the definition of
\(\biProj R[I^{(m)}t]\).

To remove the grading shift in \(\mathcal J_m\), put
\[
\mathcal L_m:=\cO_{\mathcal W^{(m)}}(1)
\otimes(q^m)^*\cO_{\mathcal X}(md).
\]
This is \(q^m\)-ample.  Since the stabilizers are finite, some positive
power \(\mathcal L_m^{\otimes N}\) has trivial stabilizer action and
descends to a \(\pi^m\)-ample line bundle \(L_m\) on \(W^{(m)}\).

The hypotheses imply that \(W^{(m)}\) is normal: it is \(S_2\), and
it is \(R_1\) because \(X\) is normal and \(\pi^m\) is small.
Choose an open subset \(U\subset X\) whose complement has
codimension at least two and over which \(\pi^m\) is an isomorphism
and \(K_X\) is Cartier.  On \(U\), the line bundle \(\mathcal L_m\)
corresponds, by \eqref{eq:stack-symbolic-canonical}, to
\(\cO_U(mK_X)\).  After increasing \(N\) if necessary, descent
therefore gives
\[
L_m|_{(\pi^m)^{-1}(U)}
\cong
\cO_{W^{(m)}}(NmK_{W^{(m)}})|_{(\pi^m)^{-1}(U)}.
\]
Both sides extend as rank-one reflexive sheaves on the normal
variety \(W^{(m)}\), and the complement of \((\pi^m)^{-1}(U)\) has
codimension at least two.  Hence the isomorphism extends uniquely to
\begin{equation}\label{eq:descended-stack-Rees-line}
L_m\cong\cO_{W^{(m)}}(NmK_{W^{(m)}}).
\end{equation}
Thus \(K_{W^{(m)}}\) is \(\QQ\)-Cartier and \(\pi^m\)-ample.  The
uniqueness in Lemma \ref{lem:self-relative-canonical-model} now
shows that \(\pi^m\) is the relative canonical model of \(X\) over
itself.
\end{proof}

In the setting of this subsection, where \(f\colon Y\to X\) is the
projective birational morphism fixed above, Lemma
\ref{lem:canonical-algebras-agree} shows that this is also the relative
canonical model of \(Y\) over \(X\).

\subsection{Computing relative canonical models}
\label{subsec:computing-relative-canonical-models}

Lemma \ref{lem:stack-Rees-candidate} reduces the verification of a
candidate to testing that its morphism is small and that its source
satisfies \(S_2\). The first condition is tested as follows. Call a point of
\(W\) \emph{exceptional} for a morphism \(\pi\colon W\to V\) when
\(\cO_{V,\pi(w)}\to\cO_{W,w}\) is not an isomorphism, and write
\(\Exc(\pi)\) for the set of such points.
\begin{lem}
\label{lem:small-test}
Let \(V\) be a normal variety, let \(\cJ\) be a nonzero coherent ideal
sheaf on \(V\) and let
\[
\pi\colon W=\cProj_{V}\bigoplus_{n\ge0}\cJ^{n}\to V
\]
be the blow-up of \(V\) along \(\cJ\), which we assume integral. Then
\(\cJ\cO_{W}\) is invertible and cuts out a closed subscheme of \(W\) of
pure codimension one, and \(\Exc(\pi)\) has codimension at least two if
and only if
\[
\dim\pi(D)=\dim D
\]
for every irreducible component \(D\) of \(V(\cJ\cO_{W})\).
\end{lem}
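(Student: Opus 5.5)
The plan is to reduce everything to the behaviour of \(\pi\) at codimension-one points of \(W\), using the fact that \(\pi\) is an isomorphism away from \(E:=V(\cJ\cO_W)\).

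\textbf{Setup.} First I would record the standard facts about blow-ups.
\begin{itemize}
\item \(\cJ\cO_W\) is invertible, since on the chart of \(\cProj_V\bigoplus_n\cJ^n\) given by a local generator \(a\in\cJ\) placed in degree one, \(\cJ\cO_W\) is generated by \(a\).
\item \(W\) is integral by assumption and \(\cJ\ne0\), so no local generator \(a\) vanishes identically. Hence \(E\) is an effective Cartier divisor on the integral scheme \(W\).
\item By Krull's principal ideal theorem, every irreducible component \(D\) of \(E\) has codimension one.
\item Over \(V\setminus V(\cJ)\) the ideal is trivial, so \(\pi\) is an isomorphism there. In particular \(\pi\) is birational, \(\dim W=\dim V\), and \(\Exc(\pi)\subset E\).
\end{itemize}

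\textbf{Exc is closed.} Next I would show that \(\Exc(\pi)\) is closed. If \(\cO_{V,\pi(w)}\to\cO_{W,w}\) is an isomorphism, then, both schemes being of finite type over \(k\), this isomorphism spreads out. Concretely, there are open neighbourhoods \(w\in W'\) and \(\pi(w)\in V'\) with \(\pi\) restricting to an isomorphism \(W'\cong V'\). Every point of \(W'\) is then non-exceptional. Consequently \(\Exc(\pi)\) has codimension at least two if and only if it contains no codimension-one point of \(W\). By the setup, the only candidates are the generic points \(\eta_D\) of the components \(D\) of \(E\). So it suffices to show, for each such \(D\), that \(\eta_D\in\Exc(\pi)\) if and only if \(\dim\pi(D)<\dim D\).

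\textbf{If \(\dim\pi(D)<\dim D\).} The generic fibre of \(D\to\pi(D)\) is positive-dimensional, so \(\eta_D\) is not isolated in its fibre \(\pi^{-1}(\pi(\eta_D))\). If \(\eta_D\) were non-exceptional, the spreading-out above would give a neighbourhood of \(\eta_D\) mapping isomorphically onto an open subset of \(V\). Then \(\eta_D\) would be isolated in its fibre, a contradiction. Hence \(\eta_D\in\Exc(\pi)\), which has codimension one.

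\textbf{If \(\dim\pi(D)=\dim D\).} Then \(\pi(D)\) is a prime divisor of \(V\), since \(\dim D=\dim W-1=\dim V-1\). Its generic point \(\xi\) is \(\pi(\eta_D)\). By normality of \(V\), \(\cO_{V,\xi}\) is a DVR. The local ring \(\cO_{W,\eta_D}\) dominates \(\cO_{V,\xi}\) and has the same fraction field, because \(\pi\) is birational. A DVR is maximal for domination among local subrings of its fraction field, so \(\cO_{V,\xi}=\cO_{W,\eta_D}\), and \(\eta_D\notin\Exc(\pi)\).

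Combining the two cases gives the equivalence. The step I expect to need the most care is the spreading-out claim: that an isomorphism of local rings of finite-type \(k\)-schemes extends to an isomorphism of open neighbourhoods. This is a standard finite-presentation argument, and it is used both for closedness of \(\Exc(\pi)\) and for the isolated-fibre contradiction.
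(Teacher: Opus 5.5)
Your proof is correct and follows the paper's argument. Both use invertibility of \(\cJ\cO_W\) together with Krull's theorem, the inclusion \(\Exc(\pi)\subseteq V(\cJ\cO_W)\), a reduction to the generic points of the components, and the DVR-domination argument when \(\dim\pi(D)=\dim D\). You differ in two small ways: you prove that \(\Exc(\pi)\) is closed, which the paper uses tacitly, and in the case \(\dim\pi(D)<\dim D\) the paper simply compares \(\dim\cO_{V,\xi}\ge 2\) with \(\dim\cO_{W,\eta}=1\), which is shorter than your isolated-point-in-the-fibre argument and needs no spreading out.
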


\begin{proof}
That \(\cJ\cO_{W}\) is invertible is a property of the
blow-up \cite[Chapter II, Proposition 7.13]{hartshorne1977algebraic}.
It is therefore locally generated by one element, which is a nonzero
divisor because \(W\) is integral and \(\cJ\ne0\), so every component of
\(V(\cJ\cO_{W})\) has codimension one by Krull's principal ideal
theorem.

Over \(V\setminus V(\cJ)\) the ideal \(\cJ\) is already invertible and
\(\pi\) is an isomorphism, so \(\Exc(\pi)\subseteq V(\cJ\cO_{W})\). Since
the latter has pure codimension one, the points of codimension one of
\(\Exc(\pi)\) are among the generic points of the components of
\(V(\cJ\cO_{W})\), and \(\Exc(\pi)\) has codimension at least two exactly
when none of those generic points is exceptional.

Let \(D\) be such a component, \(\eta\) its generic point and
\(\xi:=\pi(\eta)\), the generic point of \(\overline{\pi(D)}\). If
\(\dim\pi(D)<\dim D\) then \(\cO_{V,\xi}\) has dimension at least two while
\(\cO_{W,\eta}\) has dimension one, so \(\eta\) is exceptional. If
\(\dim\pi(D)=\dim D\) then \(\xi\) is a point of codimension one of the
normal variety \(V\), so \(\cO_{V,\xi}\) is a discrete valuation ring; it
is dominated by \(\cO_{W,\eta}\) inside their common fraction field, and
a valuation ring is maximal for domination, so the two agree and \(\eta\)
is not exceptional.
\end{proof}

\begin{algorithm}[!ht]
\raggedright
\caption{Computing the relative canonical model}
\label{algo:relative-canonical-model}

\textbf{Input:} A projective birational graph morphism
\(f\colon Y\to X\) of normal monograded varieties such that
\(f_{*}\cO_{Y}=\cO_{X}\), \(Y\) has only log terminal singularities, and
\(-K_{Y}\) is \(f\)-ample.

\textbf{Output:} A graph morphism \(g\colon Z\to X\) representing the
relative canonical model of \(Y\) over \(X\).

\textbf{Procedure:}
\begin{enumerate}
\item Let \(R\) be the homogeneous coordinate ring of \(X\) and let
\(\omega_{R}\) be its canonical module, computed as in Section
\ref{sec:nefness-canonical}. Compute the graded \(R\)-module
\(N:=\Hom_{R}(\omega_{R},R)\), let \(d\) be the least integer with
\(N_{d}\ne0\), take a nonzero \(\phi\in N_{d}\), and put
\(I:=\phi(\omega_{R})\), a homogeneous ideal of \(R\) isomorphic to
\(\omega_{R}(-d)\) as a graded \(R\)-module. (Any homogeneous ideal so
isomorphic for some integer \(d\) serves in the steps that follow; the
least \(d\) is taken because the choice governs their cost.)
\item Put \(m=1\).
\item \label{enu:compute-Rees}%
Take a minimal set of homogeneous generators
\(f_{1},\dots,f_{r}\) of \(I^{(m)}\), of degrees \(d_{1},\dots,d_{r}\). If
\(r=1\), return the identity of \(X\) as a graph morphism and stop.
Otherwise find a representation
\(R[u_{1},\dots,u_{r}]/(h_{1},\dots,h_{l})\) of the Rees algebra
\(R[I^{(m)}t]\), bigraded by \(\deg u_{i}=(d_{i},1)\). It presents the
blow-up
\[
\pi^{m}\colon V(h_{1},\dots,h_{l})\to X
\]
of \(X\) along \(I^{(m)}\), the ambient \(\biProj R[u_{1},\dots,u_{r}]\)
being proper over \(X\) with fiber \(\PP^{r-1}\), and equal to
\(X\times\PP^{r-1}\) when the \(d_{i}\) are all equal.
Compute the minimal primes of
\(I^{(m)}\cO_{V(h_{1},\dots,h_{l})}\), check whether
\(\dim\pi^{m}(D)=\dim D\) for each of the components \(D\) they define
(Lemma \ref{lem:small-test}), and check whether
\(V(h_{1},\dots,h_{l})\) satisfies \(S_{2}\). When these conditions
are satisfied, we transform the obtained bi-to-mono projection
\[
V(h_{1},\dots,h_{l})
=\biProj\bigl(R[u_{1},\dots,u_{r}]/(h_{1},\dots,h_{l})\bigr)
\to\Proj R
\]
into a graph morphism and return it as an output.
Otherwise, put
\(m=m+1\) and redo Step \ref{enu:compute-Rees}.
\end{enumerate}
\end{algorithm}

\begin{prop}
\label{prop:compute-relative-canonical-model}
Let \(f\colon Y\to X\) be a projective birational graph morphism of
normal monograded varieties such that \(f_*\cO_Y=\cO_X\), \(Y\) has only
log terminal singularities, and \(-K_Y\) is \(f\)-ample. Then we can
compute the relative canonical model \(g\colon Z\to X\) as a graph
morphism of monograded varieties by Algorithm
\ref{algo:relative-canonical-model}. If \(f\) is small, this is the
flip of \(f\).
\end{prop}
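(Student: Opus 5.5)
The argument has three parts: every step of Algorithm \ref{algo:relative-canonical-model} is effective, the loop over \(m\) terminates, and whatever it returns is the relative canonical model.

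\textbf{Effectivity.} Step 1 is a computation of \(\omega_R\) as an Ext module (Section \ref{sec:nefness-canonical}), of \(\Hom_R(\omega_R,R)\), and of the least degree \(d\) with a nonzero element \(\phi\). Since \(R\) is a domain and \(\omega_R\) is torsion-free of rank one, any nonzero \(\phi\) is injective, so \(I=\phi(\omega_R)\cong\omega_R(-d)\).

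In Step 3 the following are all standard Gröbner-basis operations over a computable field with a splitting algorithm:
\begin{itemize}
\item the symbolic power \(I^{(m)}\), obtained from primary decomposition of \(I^m\) by keeping the components at the minimal primes of \(I\);
\item the Rees algebra, as the kernel of \(R[u_1,\dots,u_r]\to R[t]\);
\item minimal primes and their images, with dimensions obtained by elimination;
\item the \(S_2\) test, for instance by comparing the ring with its \(S_2\)-ification computed as \(\Hom(\omega,\omega)\), or via dimensions of Ext modules.
\end{itemize}
The final conversion of the bi-to-mono projection into a graph morphism is Lemma \ref{lem:bi-to-mono-to-graph}.

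If \(r=1\), then \(I^{(m)}\) is principal. It is then invertible and \(K_X\) is \(\QQ\)-Cartier, so \(X\) is its own relative canonical model by the uniqueness clause in Lemma \ref{lem:self-relative-canonical-model}.

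\textbf{Correctness.} When the tests succeed, \(X\) is of klt type by Lemma \ref{lem:target-klt-type}. Lemma \ref{lem:small-test} converts the dimension checks into smallness of \(\pi^m\). Lemma \ref{lem:stack-Rees-candidate} then says \(\pi^m\) is the relative canonical model of \(X\) over itself. By Lemma \ref{lem:canonical-algebras-agree}, this coincides with the relative canonical model of \(Y\) over \(X\). If \(f\) is small, Remark \ref{rem:tests-miss-qfactoriality} identifies it with the flip.

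\textbf{Termination.} This is the main obstacle. By Lemma \ref{lem:self-relative-canonical-model}, \(\cA_X\) is finitely generated, so for sufficiently divisible \(r_0\) the Veronese \(\bigoplus_n\cO_X(nr_0K_X)\) is generated in degree one. Symbolic powers of \(I\) correspond, up to the twist by \(\cO_{\mathcal X}(-md)\) in \eqref{eq:stack-symbolic-canonical}, to \(\cO_X(mK_X)\). Hence for \(m=r_0\) we get \((I^{(m)})^n\) and \(I^{(mn)}\) agreeing after sheafification. The blow-up of \(I^{(m)}\) is then \(\cProj\) of the Veronese of \(\cA_X\), namely \(Z\), which is small and normal, so it passes both tests.

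The loop increments \(m\) by one, but it reaches \(m=r_0\) in finitely many steps. It may stop earlier, but any earlier stop is correct by the correctness argument. The delicate point is the precise stacky comparison of \(\bigoplus_n(I^{(m)})^n\) with \(\bigoplus_n I^{(mn)}\). These differ by saturation with respect to \(R_\dagger\) and in codimension \(\ge2\), so the comparison has to be made after equivariant sheafification, where generation in degree one gives equality. I would carry this out chartwise on \(D_+(x_i)\), as in the proof of Lemma \ref{lem:stack-Rees-candidate}.
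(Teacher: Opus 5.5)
Your proposal is correct and follows essentially the same route as the paper: each step is effective; any returned candidate is the relative canonical model by Lemmas \ref{lem:target-klt-type}, \ref{lem:small-test}, \ref{lem:stack-Rees-candidate} and \ref{lem:canonical-algebras-agree}; and the loop terminates because, for sufficiently divisible \(m\), the Veronese of the canonical algebra is generated in degree one, so after equivariant sheafification the blow-up of \(I^{(m)}\) is \(Z\), which passes both tests. The differences are minor: you compute symbolic powers by primary decomposition rather than as double duals (these agree since \(I\) is divisorial), and you justify the \(r=1\) exit by the uniqueness clause of Lemma \ref{lem:self-relative-canonical-model}; one caution is to apply the \(S_{2}\) check to the scheme \(V(h_{1},\dots,h_{l})\) chartwise rather than to the Rees algebra itself, since the algebra can fail \(S_{2}\) even for the correct candidate.
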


\begin{proof}
Let \(R\) be the homogeneous coordinate ring of \(X\) and let \(\omega_{R}\)
be its canonical module, computed as in Section
\ref{sec:nefness-canonical}. It is a torsion-free graded \(R\)-module of
rank one by \cite[12.1.9(i), 12.1.18(ii), 14.5.2]{brodmann2012localcohomology}.
Consequently it embeds homogeneously into \(R\) after a shift, so Step 1
computes an ideal \(I\cong\omega_R(-d)\). Its symbolic powers are computed
as double duals; see \cite{drabkin2019calculations}.

For each \(m\), the Rees algebra of \(I^{(m)}\) and its bi-to-mono
projection are computable by
\cite[Proposition 10.2.11]{vasconcelos1994computational}.
Lemma \ref{lem:small-test} tests whether this projection is small. If
\(I^{(m)}\) is principal, its blow-up is the identity of \(X\). In all
cases, every candidate returned by the algorithm is the relative
canonical model by Lemma \ref{lem:stack-Rees-candidate}.

It remains to prove termination. For a sufficiently divisible \(m\), the
line bundle \(\cO_X(md)\) is invertible and the Veronese algebra
\[
\cT^m:=\bigoplus_{n\ge0}\cO_X(nmK_X)
\]
is generated in degree one
\cite[Tag~0EGH]{stacksprojectauthors2022stacksproject}. Equivariant
sheafification of \(I^{(m)}\cong\omega_R^{[m]}(-md)\), followed by the
twist by \(\cO_X(md)\), identifies the candidate with the component
dominating \(X\) in
\[
\cProj_X\cS^m,
\qquad
\cS^m:=\bigoplus_{n\ge0}\cO_X(mK_X)^{\otimes n}.
\]
The natural map \(\cS^m\to\cT^m\) is surjective, and this component is
therefore
\[
\cProj_X\cT^m=Z.
\]
Thus such an \(m\) passes both tests, and
the search terminates with the correct model. Finally, Section
\ref{subsec:bi-to-mono-projections} converts the resulting bi-to-mono
projection into a graph morphism.
\end{proof}

Following \cite[Definition~1]{kollar2021relative}, a step of the minimal
model program with scaling of an ample divisor \(H\) is a diagram
\(X'\to Z'\leftarrow X''\) in which, writing \(H'\) and \(H''\) for
the birational transforms of \(H\) on \(X'\) and \(X''\), the first
morphism is the contraction defined by \(K_{X'}+rH'\) for the
relevant value of \(r\), the second is small, and
\(K_{X''}+(r-\epsilon)H''\) is ample for \(0<\epsilon\ll1\).

\begin{lem}
\label{lem:mixed-step}
Let \(f\colon Y\to X\) be a birational contraction, and let
\(g\colon Z\to X\) be a small birational morphism such that \(K_Z\) is
\(\QQ\)-Cartier and \(g\)-ample. Let \(H\) be an ample Cartier divisor on
\(Y\), let \(\lambda\in\QQ_{>0}\) and suppose that
\(K_{Y}+\lambda H\sim_{\QQ}f^{*}L\) for an ample
\(\QQ\)-Cartier divisor \(L\) on \(X\). Let \(H_{Z}\) be the strict transform
of \(f_{*}H\) on \(Z\). Then \(H_{Z}\) is \(\QQ\)-Cartier and, for
\(0<\epsilon<\lambda\),
\[
K_{Y}+(\lambda-\epsilon)H\sim_{\RR,X}-\epsilon H
\quad\text{is \(f\)-anti-ample},
\]
\[
K_{Z}+(\lambda-\epsilon)H_{Z}\sim_{\RR,X}\tfrac{\epsilon}{\lambda}K_{Z}
\quad\text{is \(g\)-ample},
\]
while for \(0<\epsilon\ll1\) both \(K_{Y}+(\lambda+\epsilon)H\) and
\(K_{Z}+(\lambda-\epsilon)H_{Z}\) are ample.
\end{lem}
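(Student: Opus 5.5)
The plan is to express everything in terms of \(L\) and the canonical divisors, using that \(g\) is small so that divisors on \(Z\) and \(X\) correspond in codimension one. First, push forward the relation \(K_{Y}+\lambda H\sim_{\QQ}f^{*}L\) along the birational morphism \(f\). Choosing canonical divisors compatibly, so that \(f_{*}K_{Y}=K_{X}\), this gives \(K_{X}+\lambda f_{*}H\sim_{\QQ}L\) as Weil \(\QQ\)-divisors on \(X\). Since \(g\) is small, taking strict transforms identifies Weil divisors on \(Z\) and on \(X\) and preserves \(\QQ\)-linear equivalence, because a rational function determines the same principal divisor on both. The strict transform of \(L\) is \(g^{*}L\), as \(L\) is \(\QQ\)-Cartier and \(g\) contracts no divisor. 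Choosing \(K_{Z}\) to be the strict transform of \(K_{X}\), we obtain
\[
\lambda H_{Z}\sim_{\QQ}g^{*}L-K_{Z}.
\]
The right-hand side is \(\QQ\)-Cartier because \(K_{Z}\) is, so \(H_{Z}\) is \(\QQ\)-Cartier. This settles the first assertion.

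Next I would substitute into the two displayed relations. On \(Y\) we have
\[
K_{Y}+(\lambda-\epsilon)H\sim_{\QQ}f^{*}L-\epsilon H\sim_{\RR,X}-\epsilon H,
\]
and this is \(f\)-anti-ample because \(H\) is ample and \(\epsilon>0\). On \(Z\), substituting \(H_{Z}\sim_{\QQ}\lambda^{-1}(g^{*}L-K_{Z})\) gives
\[
K_{Z}+(\lambda-\epsilon)H_{Z}\sim_{\QQ}\tfrac{\epsilon}{\lambda}K_{Z}+\tfrac{\lambda-\epsilon}{\lambda}g^{*}L\sim_{\RR,X}\tfrac{\epsilon}{\lambda}K_{Z}.
\]
This is \(g\)-ample since \(K_{Z}\) is \(g\)-ample and \(\epsilon/\lambda>0\); here we use \(0<\epsilon<\lambda\).

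For global ampleness on \(Y\), write \(K_{Y}+(\lambda+\epsilon)H\sim_{\QQ}f^{*}L+\epsilon H\). This is the sum of a nef divisor and an ample divisor, hence ample for every \(\epsilon>0\). On \(Z\), factor
\[
K_{Z}+(\lambda-\epsilon)H_{Z}\sim_{\QQ}\tfrac{\lambda-\epsilon}{\lambda}\Bigl(g^{*}L+\tfrac{\epsilon}{\lambda-\epsilon}K_{Z}\Bigr).
\]
It then suffices that \(g^{*}L+\delta K_{Z}\) is ample for all sufficiently small \(\delta>0\). Equivalently, \(K_{Z}+tg^{*}L\) should be ample for all \(t\gg0\), which is the standard fact that a \(g\)-ample divisor plus the pullback of a sufficiently large multiple of an ample divisor on the base is ample (e.g.\ \cite[Proposition~1.45]{kollar1998birational}, applied to a Cartier multiple). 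Since \(\epsilon/(\lambda-\epsilon)\to0\) as \(\epsilon\to0\), this gives ampleness for \(0<\epsilon\ll1\).

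The main point needing care is the first step: I must check that the strict transform of \(f_{*}H\) to \(Z\) satisfies \(\QQ\)-linear equivalence with \(g^{*}L-K_{Z}\), not merely agreement in codimension one on \(X\). This is where smallness of \(g\) is used, since the Weil divisor class groups of \(Z\) and \(X\) are identified by strict transform. The ampleness criterion in the last step is standard and needs only that \(X\) is projective.
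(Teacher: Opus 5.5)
Your proposal is correct and follows essentially the same route as the paper: push the relation forward to \(X\), take strict transforms along the small morphism \(g\) to get \(K_{Z}+\lambda H_{Z}\sim_{\QQ}g^{*}L\) (so \(H_{Z}\) is \(\QQ\)-Cartier), read off the relative statements over \(X\), and get global ampleness on \(Z\) from the fact that a \(g\)-ample divisor plus a large multiple of \(g^{*}L\) is ample. The only difference is cosmetic: you apply this last fact to \(K_{Z}\), while the paper applies it to \(-H_{Z}\sim_{\QQ,X}\tfrac{1}{\lambda}K_{Z}\), which amounts to the same thing.
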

\begin{proof}
Subtracting \(\epsilon H\) from the hypothesis
\(K_{Y}+\lambda H\sim_{\QQ}f^{*}L\sim_{\QQ,X}0\) gives
\(K_{Y}+(\lambda-\epsilon)H\sim_{\RR,X}-\epsilon H\), which is
\(f\)-anti-ample because \(H\) is ample.
Pushing \(K_{Y}+\lambda H\sim_{\QQ}f^{*}L\) forward along \(f\) gives
\(K_{X}+\lambda f_{*}H\sim_{\QQ}L\); since \(g\) is small, taking strict
transforms yields
\[
K_{Z}+\lambda H_{Z}\sim_{\QQ}g^{*}L.
\]
Here \(K_{Z}\) and \(g^{*}L\) are \(\QQ\)-Cartier, hence so is \(H_{Z}\).
Over \(X\) we have \(g^{*}L\sim_{\QQ,X}0\), so
\(K_{Z}\sim_{\QQ,X}-\lambda H_{Z}\) and therefore
\[
K_{Z}+(\lambda-\epsilon)H_{Z}\sim_{\RR,X}
K_{Z}-\frac{\lambda-\epsilon}{\lambda}K_{Z}=\frac{\epsilon}{\lambda}K_{Z},
\]
which is \(g\)-ample because \(K_{Z}\) is and \(0<\epsilon<\lambda\).

On \(Y\), the divisor
\(K_{Y}+(\lambda+\epsilon)H\sim_{\RR}f^{*}L+\epsilon H\) is the sum of a
nef and an ample divisor, hence ample, for every \(\epsilon>0\). On
\(Z\), the divisor \(-H_{Z}\sim_{\QQ,X}\frac{1}{\lambda}K_{Z}\) is
\(g\)-ample, so \(-H_{Z}+Ng^{*}L\) is ample for \(N\gg0\). Hence for
\(0<\epsilon\ll1\), the divisor
\[
K_{Z}+(\lambda-\epsilon)H_{Z}\sim_{\RR}g^{*}L-\epsilon H_{Z}
=\epsilon\Bigl(-H_{Z}+\frac{1}{\epsilon}g^{*}L\Bigr)
\]
is ample as well.
\end{proof}

\begin{cor}
\label{cor:relative-canonical-model-MMP-step}
Let \(f\colon Y\to X\) be the contraction produced by Algorithm
\ref{algo:find-contraction} from an ample divisor \(H\) at the
threshold \(\lambda\), and let \(g\colon Z\to X\) be the relative
canonical model of \(f\) computed by Algorithm
\ref{algo:relative-canonical-model}, as in Proposition
\ref{prop:compute-relative-canonical-model}. Then
\(Y\to X\leftarrow Z\) is a step of the \(K\)-MMP in the sense of
\cite[Definition~3.5]{hashizume2025normal}. Viewed as a one-step sequence, it
is a \(K\)-MMP with scaling of \(H\) in the sense of the same
definition. It is also a step of the minimal model program with scaling
of \(H\) in the sense of \cite[Definition~1]{kollar2021relative}.
\end{cor}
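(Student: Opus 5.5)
The plan is to reduce everything to Lemma \ref{lem:mixed-step}, applied with \(Y\) the input model, \(X\) the target of the contraction, \(Z\) the relative canonical model, and \(L\) the ample divisor on \(X\) induced by \(D=K_Y+\lambda H\). First I will verify the hypotheses of that lemma. By Proposition \ref{prop:contraction-from-lambda}, \(f\) is the Stein factorization of \(\Phi_{|MD|}\). Hence \(f_*\cO_Y=\cO_X\), and \(MD\) is the pullback of \(\cO(1)\) along \(\Phi_{|MD|}\). That pulls back to an ample divisor on \(X\) because \(X\to\PP^n\) is finite, so \(K_Y+\lambda H\sim_\QQ f^*L\) with \(L\) ample and \(\QQ\)-Cartier. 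Since \(f\) is birational in this corollary, it is a birational contraction. Moreover \(-K_Y\sim_{\QQ,X}\lambda H\) is \(f\)-ample, so Proposition \ref{prop:compute-relative-canonical-model} applies. Lemma \ref{lem:self-relative-canonical-model}, combined with Lemma \ref{lem:canonical-algebras-agree}, then gives that \(g\) is small, \(Z\) is log terminal, and \(K_Z\) is \(\QQ\)-Cartier and \(g\)-ample.

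With these hypotheses in place, Lemma \ref{lem:mixed-step} gives all the positivity statements directly. It says that \(H_Z\) is \(\QQ\)-Cartier, that \(K_Y+(\lambda-\epsilon)H\) is \(f\)-anti-ample, that \(K_Z+(\lambda-\epsilon)H_Z\) is \(g\)-ample, and that both \(K_Y+(\lambda+\epsilon)H\) and \(K_Z+(\lambda-\epsilon)H_Z\) are ample for small \(\epsilon\). For Kollár's Definition~1 I will read off the requirements in order. The first morphism must be the contraction defined by \(K_Y+rH\) at \(r=\lambda\), which is the construction itself. The second morphism must be small, which we have. Finally \(K_Z+(\lambda-\epsilon)H_Z\) must be ample, and here I must note that \(H_Z\) is exactly the birational transform of \(H\), since \(g\) is small.

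For Hashizume's Definition~3.5, I will check the defining conditions of a step \(Y\to X\leftarrow Z\). The contraction must be \(K_Y\)-negative with \(f_*\cO_Y=\cO_X\). The map \(g\) must be small birational with \(K_Z\) \(\QQ\)-Cartier and \(g\)-ample; equivalently, \(Z\) must be the ample model of \(K_Y\) over \(X\), which Lemma \ref{lem:canonical-algebras-agree} supplies. I will allow the case where \(f\) is divisorial and \(g\) is an isomorphism (the case \(r=1\) in Algorithm \ref{algo:relative-canonical-model}), as well as the case where \(X\) is not \(\QQ\)-Gorenstein. The scaling clause, in the form of Remark~3.6 there, asks that \(\lambda=\inf\{t\mid K_Y+tH \text{ nef}\}\) with \(K_Y+\lambda H\) trivial over \(X\), together with the ampleness statements for \(\epsilon\) small. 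All of these come from Lemma \ref{lem:lambda} and Lemma \ref{lem:mixed-step}.

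The main obstacle is matching definitions precisely rather than any new geometry. Hashizume's framework allows non-\(\QQ\)-factorial varieties and uses \(\sim_{\RR,X}\) conditions, and I must make sure that \(H_Z\), defined as a strict transform through a possibly non-\(\QQ\)-Gorenstein \(X\), is the divisor his definition scales by. Lemma \ref{lem:mixed-step} already shows that \(H_Z\) is \(\QQ\)-Cartier, so I expect the remaining work to be a careful unpacking of the cited definitions.
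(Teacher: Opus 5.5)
Your proposal is correct and follows the paper's proof essentially step for step. You take $L=\frac{1}{M}\nu^{*}\cO_{\PP^{N}}(1)$ from the finite part $\nu$ of the Stein factorization in Proposition \ref{prop:contraction-from-lambda}, obtain smallness of $g$ and $g$-ampleness of $K_Z$ from Proposition \ref{prop:compute-relative-canonical-model}, and read off Hashizume's and Koll\'ar's conditions from Lemma \ref{lem:mixed-step}, together with the nefness of $K_Y+\lambda H$ and its vanishing on $f$-contracted curves.

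There are two harmless slips, neither of which affects the argument. First, the paper places the scaling clause in Hashizume's Definition~3.5 itself and cites Remark~3.6 only for dropping the relative Picard number one requirement. Second, $g$ can be an isomorphism even when Algorithm \ref{algo:relative-canonical-model} does not exit through $r=1$, so the isomorphism case is not the same as the $r=1$ case.
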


\begin{proof}
By Proposition \ref{prop:compute-relative-canonical-model}, the divisor
\(-K_Y\) is \(f\)-ample, the morphism \(g\) is small, and \(K_Z\) is
\(g\)-ample. Thus the diagram is a step of the \(K\)-MMP in the sense
of Hashizume.

We produce the divisor \(L\) that Lemma \ref{lem:mixed-step} asks for.
Write
\[
Y\xrightarrow{f}X\xrightarrow{\nu}\PP^{N}
\]
for the Stein factorization of
\(\Phi_{|M(K_{Y}+\lambda H)|}\) constructed in Proposition
\ref{prop:contraction-from-lambda}. The divisor
\(A:=\nu^{*}\cO_{\PP^{N}}(1)\) is ample Cartier and
\[
\cO_{Y}\bigl(M(K_{Y}+\lambda H)\bigr)\cong f^{*}A.
\]
Taking \(L:=(1/M)A\) gives
\(K_{Y}+\lambda H\sim_{\QQ}f^{*}L\), so Lemma
\ref{lem:mixed-step} applies.

In particular, the birational transform \(H_Z\) is \(\QQ\)-Cartier and
\(K_Z+(\lambda-\epsilon)H_Z\) is ample for
\(0<\epsilon\ll1\), so the nef threshold on \(Z\) is well defined. On
the other hand, by the definition of \(\lambda\), the divisor
\(K_Y+\lambda H\) is nef and has intersection number zero with every
curve contracted by \(f\). Thus the one-step sequence is a \(K\)-MMP
with scaling of \(H\) in the sense of Hashizume. The ampleness assertions
of Lemma \ref{lem:mixed-step} also verify Kollár's conditions.
\end{proof}

\subsection{The three-dimensional minimal model program}
\label{subsec:three-dimensional-MMP}

The preceding results assemble into Algorithm \ref{algo:MMP}. At each
iteration, Section \ref{sec:nefness-canonical} either certifies that the
canonical divisor is nef or computes its next contraction. A contraction
of fiber type ends the program, while for a birational contraction
Algorithm \ref{algo:relative-canonical-model} computes the relative
canonical model; Corollary \ref{cor:relative-canonical-model-MMP-step}
shows that the resulting diagram is a step of the minimal model program.
If the target is \(\QQ\)-Gorenstein, its relative canonical model is the
target itself, which is retained without replacing it by a relative
\(\Proj\) presentation. Thus the only remaining point is that the
procedure terminates, as proved in Proposition \ref{prop:termination}
below.

\begin{algorithm}[H]
\raggedright
\caption{The minimal model program for threefolds}
\label{algo:MMP}

\textbf{Input:} A normal monograded variety \(X\) of dimension 3 with
only log terminal singularities.

\textbf{Output:} A finite chain \(X=X_0,X_1,\dots,X_n\) of monograded
varieties describing a minimal model program for \(X\), together with
graph morphisms \(f_i\) connecting consecutive terms. Each \(f_i\) is
either a contraction
\[
f_i\colon X_{i-1}\to X_i
\]
or, when \(X_{i-1}\) is not \(\QQ\)-Gorenstein, its relative canonical
model
\[
f_i\colon X_i\to X_{i-1}.
\]
The chain ends either with a variety whose canonical divisor is nef or
with a \(K\)-negative fibration.

\textbf{Procedure:}
\begin{enumerate}
\item Set \(i=0\) and \(X_{0}\) to be \(X\).
\item \label{enu:algo-step}%
Find a positive integer \(a\) such that \(aK_{X_i}\) is Cartier, as in the
proof of Proposition \ref{prop:nef}, and check whether \(K_{X_i}\) is nef.
If it is, stop and return the rational maps obtained so far.
\item
Using \(a\) as the Cartier-index input to Algorithm
\ref{algo:find-contraction}, compute an ample Cartier divisor \(H\) on
\(X_i\) as in Remark \ref{rem:choice-ample-divisor}, the threshold
\(\lambda\), and the contraction \(\phi\colon X_i\to W\) it defines. Set
\(X_{i+1}:=W\) and \(f_{i+1}:=\phi\).
\item If \(\dim X_{i+1}<\dim X_{i}\), stop and return the chain obtained so far.
\item Otherwise compute the relative canonical model \(g\colon Z\to X_{i+1}\)
of \(f_{i+1}\) (Algorithm \ref{algo:relative-canonical-model}).
Test whether \(g\) is an isomorphism by applying the graph-morphism test of
\cite[Lemma~3.3]{yasuda2023theisomorphism} to its graph.
If it is, put \(i=i+1\) and go to Step \ref{enu:algo-step}. If it is not,
set \(X_{i+2}:=Z\) and
\(f_{i+2}:=g\colon X_{i+2}\to X_{i+1}\); put \(i=i+2\) and go to
Step \ref{enu:algo-step}.
\end{enumerate}
\end{algorithm}

\begin{prop}
\label{prop:termination}
Let \(X\) be a normal projective variety of dimension three with only
log terminal singularities. Then the sequence of steps that Algorithm
\ref{algo:MMP} performs on \(X\) is finite.
\end{prop}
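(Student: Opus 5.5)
The plan is to reduce to termination of the classical \(\QQ\)-factorial MMP in dimension three by attaching to every birational iteration of Algorithm \ref{algo:MMP} a nonempty finite segment of a \(\QQ\)-factorial \(K\)-MMP, and concatenating these segments. Only birational iterations need to be counted, since the algorithm stops at the first nef canonical divisor or at the first contraction with \(\dim X_{i+1}<\dim X_i\). Each birational iteration has the shape \(Y\xrightarrow{f}X\xleftarrow{g}Z\). Here \(f\) is the contraction of Algorithm \ref{algo:find-contraction}, and \(g\) is the relative canonical model, with \(g=\mathrm{id}\) when \(X\) is \(\QQ\)-Gorenstein. By Corollary \ref{cor:relative-canonical-model-MMP-step}, this is a step of the \(K\)-MMP in Hashizume's sense, and \(Z\) is again log terminal by Lemma \ref{lem:self-relative-canonical-model}. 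So the proposition amounts to finiteness of such a non-\(\QQ\)-factorial \(K\)-MMP in dimension three.

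For the lifting, I take a small \(\QQ\)-factorial modification \(Y'\to Y\), which exists because \(Y\) is klt, and run a \(K_{Y'}\)-MMP over \(X\). Because \(Y'\) is \(\QQ\)-factorial and of dimension three, this relative MMP terminates by termination of flips (and finiteness of divisorial contractions). Since \(Y'\) is birational over \(X\), it ends with a relative minimal model \(Y''\to X\). The relative canonical model of \(Y''\) over \(X\) equals that of \(Y\) over \(X\), hence equals \(Z\) by Lemma \ref{lem:canonical-algebras-agree}, so \(K_{Y''}\) is the pullback of \(K_Z\) under the induced morphism \(Y''\to Z\). The segment is nonempty: by Proposition \ref{prop:contraction-from-lambda}, \(f\) contracts a curve \(C\) with \(K_Y\cdot C<0\), so \(K_{Y'}\) is not nef over \(X\) and \(Y'\neq Y''\).

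The step I expect to be the main obstacle is gluing consecutive segments, which requires \(Y''\to Z\) to be small. If it is, then \(Y''\) is a small \(\QQ\)-factorial modification of \(Z\) and can serve as the \(Y'\) of the next iteration, so all segments concatenate into a single \(\QQ\)-factorial \(K\)-MMP starting from a small \(\QQ\)-factorialization of \(X\). For smallness I would reuse the negativity argument of Lemma \ref{lem:canonical-algebras-agree} on a common resolution \(p,q\). It gives \(p^*K_Y=q^*K_Z+E\) with \(E\ge0\), and strict \(f\)-ampleness of \(-K_Y\) upgrades this to \(E>0\) on every \(f\)-exceptional divisor. Equivalently, every \(f\)-exceptional divisor has strictly smaller discrepancy with respect to \(Y\) than with respect to \(Z\).

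On the relative minimal model \(Y''\), whose canonical divisor is the pullback of \(K_Z\), the divisors have the same discrepancies as with respect to \(Z\). By the standard discrepancy comparison for a \(K\)-MMP, these are at least those with respect to \(Y'\), which equal those with respect to \(Y\). So any \(f\)-exceptional divisor surviving on \(Y''\) would have discrepancy with respect to \(Y\) that is simultaneously strictly smaller than and at least its discrepancy with respect to \(Z\), a contradiction. Hence \(Y''\) carries no \(f\)-exceptional divisors. Its divisors are therefore exactly the divisors of \(X\), which are the divisors of \(Z\) because \(g\) is small, so \(Y''\to Z\) is small.

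Concatenating the segments, infinitely many birational iterations would produce an infinite \(\QQ\)-factorial \(K\)-MMP on a klt threefold. This contradicts termination of such MMPs in dimension three, which descends to \(k\) as explained in the introduction. Hence the sequence of steps performed by Algorithm \ref{algo:MMP} is finite.
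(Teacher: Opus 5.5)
Your route is the paper's: lift each birational iteration \(Y\to X\leftarrow Z\) to a nonempty block of ordinary \(K\)-negative steps on a \(\QQ\)-factorial model, concatenate the blocks, and invoke termination of threefold flips. The difference is in how the blocks are glued. The paper gets the blocks from Hashizume's lifting construction and starts from a crepant, not necessarily small, \(\QQ\)-factorial modification. So it only needs the last model of each block to map crepantly onto \(Z\), which your identification of relative canonical models already gives you. The next block then runs from that crepant model in the same way, because its relative canonical algebra over the next base agrees with that of \(Z\) and its canonical divisor is still not relatively nef. The smallness of \(Y''\to Z\) is therefore not the main obstacle you took it to be; you can drop that step.

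If you keep that step, its final comparison needs fixing. As written, the two facts you combine are \(a(F,Y)<a(F,Z)\) and \(a(F,Z)=a(F,Y'')\ge a(F,Y')=a(F,Y)\). These are compatible, so they give no contradiction. The contradiction comes instead from the fact that a surviving \(F\) is a prime divisor on both \(Y\) and \(Y''\), so \(a(F,Z)=a(F,Y'')=0=a(F,Y)\). This violates the strict inequality \(a(F,Y)<a(F,Z)\), which is \cite[Lemma~3.38]{kollar1998birational}: it applies because \(-K_Y\) is \(f\)-ample and \(f\) is not an isomorphism over the generic point of \(f(F)\). The MMP discrepancy comparison plays no role there.
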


\begin{proof}
By Proposition \ref{prop:compute-relative-canonical-model}, every
birational iteration of Algorithm \ref{algo:MMP} is a diagram
\[
Y_i\to W_i\leftarrow Y_{i+1}
\]
in which \(Y_{i+1}\) is the relative canonical model of \(Y_i\) over
\(W_i\). Thus \(-K_{Y_i}\) is ample over \(W_i\), the morphism
\(Y_{i+1}\to W_i\) is small, and \(K_{Y_{i+1}}\) is ample over \(W_i\).
Consequently this diagram is a step of the \(K\)-MMP in the sense of
\cite[Definition~3.5]{hashizume2025normal}. This definition does not require
\(Y_i/W_i\) to have relative Picard number one; see
\cite[Remark~3.6]{hashizume2025normal}.

Take a crepant \(\QQ\)-factorial modification \(Y'_0\to Y_0\), and
apply successively the lifting construction in the proof of
\cite[Theorem~3.9]{hashizume2025normal}; see also
\cite[Remark~3.10]{hashizume2025normal}. Since every \(Y_i\) is klt, its
non-lc locus is empty. Each lifted block therefore consists of ordinary
MMP steps whose contractions have relative Picard number one. By the
uniqueness of the relative canonical model, its last model maps
crepantly to \(Y_{i+1}\), and is therefore a crepant \(\QQ\)-factorial
modification of \(Y_{i+1}\).
The blocks concatenate, and each is nonempty because \(K_{Y_i}\) is not
nef over \(W_i\). Hence an infinite run of the algorithm would lift to
an infinite classical MMP sequence on \(\QQ\)-factorial klt threefolds, which is
impossible by
\cite{kawamata1992termination,shokurov1996threefold,kollar1992flips}.
\end{proof}

\begin{rem}
When \(K_X\) is pseudo-effective, the termination assertion is already
covered by \cite[Remark~3.16]{hashizume2025normal}. The proof above is essentially the
same lifting argument used there, based on
\cite[Theorem~3.9]{hashizume2025normal}.
\end{rem}

\begin{rem}
\label{rem:implementation}
Prototype Macaulay2 implementations are available online: one for
Stein factorization (Algorithm \ref{algo:Stein})%
\footnote{\url{https://github.com/takehikoyasuda/SteinFactorizationM2}},
one for relative canonical models (Algorithm
\ref{algo:relative-canonical-model})%
\footnote{\url{https://github.com/takehikoyasuda/flip-computation}},
and one driving Algorithm \ref{algo:MMP}%
\footnote{\url{https://github.com/takehikoyasuda/Coding_MMP}}.
The code
was produced through conversations with large language models,
Claude and ChatGPT, rather than written directly by the
author, who has not checked it line by line. It should therefore be
regarded as experimental.

Each basic operation used by the algorithms has been carried out on
very simple examples, including tests of nefness, the computation of a
nef threshold and its contraction, a relative canonical model, and a
Stein factorization. These calculations show that the procedures can
be implemented, but they do not yet amount to an automatic computation
of an interesting MMP sequence. Substantial improvements in efficiency
will be needed for that purpose. Developing such improvements is an
important problem for future work and is well worth pursuing.
\end{rem}

\bibliographystyle{alpha}
\bibliography{AlgoMMP}

\end{document}